\newcommand{\beq}{\begin{equation}}
\newcommand{\eeq}{\end{equation}}
\newcommand\dirac{\slash\!\!\!\partial}
\newcommand\Ch{\operatorname{Ch}}
\newcommand\C{\mathbb C}
\newcommand\R{\mathbb R}
\newcommand\Z{\mathbb Z}
\newcommand\T{\mathbb T}
\newcommand\cM{\mathcal{M}}
\newcommand\maC{\mathcal{C}}
\newcommand\Ind{\operatorname{Ind}}
\newcommand\tr{\operatorname{tr}}
\newcommand\ch{{\mbox{ch}_{*}}}
\newcommand\cA{\mathcal{A}}
\newcommand\cF{\mathcal{F}}
\newcommand\cK{\mathcal{K}}
\newcommand\cG{\mathcal{G}}
\newcommand\cP{\mathcal{P}}
\newcommand\cL{\mathcal{L}}
\newcommand\cS{\mathcal{S}}
\newcommand\maS{\mathcal{S}}
\newcommand{\CC}{{\mathbb C}}
\newcommand{\NN}{{\mathbb N}}
\newcommand{\Q}{{\mathbb Q}}
\newcommand{\RR}{{\mathbb R}}
\newcommand{\TT}{{\mathbb T}}
\newcommand{\ZZ}{{\mathbb Z}}
\renewcommand\ch{\operatorname{ch}}
\renewcommand\Ch{\operatorname{Ch}}
\newcommand\Range{\operatorname{Range}}
\newtheorem{thm}{Theorem}[section]
\newtheorem{theorem}{Theorem}[section]
\newtheorem*{thm*}{Theorem}
\newtheorem{corollary}[thm]{Corollary}
\newtheorem*{cor*}{Corollary}
\newtheorem{conjecture}{Conjecture}
\newtheorem*{lemma*}{Lemma}
\newtheorem{problem}{Problem}
\newtheorem{lemma}[thm]{Lemma}
\newtheorem*{prop*}{Proposition}
\newtheorem{proposition}[thm]{Proposition}
\theoremstyle{definition}
\newtheorem{remark}[thm]{Remark}
\newtheorem*{defn*}{Definition}
\newtheorem{definition}{Definition}
\definecolor{darkgreen}{cmyk}{1,0,1,.2}
\definecolor{m}{rgb}{1,0.1,1}
\definecolor{green}{cmyk}{1,0,1,0}
\definecolor{test}{rgb}{1,0,0}
\definecolor{cmyk}{cmyk}{0,1,1,0}
\begin{document}

\title{Gap-labelling conjecture with nonzero magnetic field}
\author{Moulay Tahar Benameur}
\address{Institut Montpellierain Alexander Grothendieck, UMR 5149 du CNRS, France}
\email{moulay.benameur@umontpellier.fr}

\author{Varghese Mathai}
\address{Department of Mathematics, University of Adelaide,
Adelaide 5005, Australia}
\email{mathai.varghese@adelaide.edu.au}

\begin{abstract}
Given a constant magnetic field on Euclidean space $\RR^p$ determined by a skew-symmetric $(p\times p)$ matrix $\Theta$,  and a $\Z^p$-invariant probability measure $\mu$ on the disorder set $\Sigma$ which is by hypothesis a Cantor  set, where the action is assumed to be minimal, the corresponding Integrated Density of States of any self-adjoint operator
affiliated to the twisted crossed product  algebra $C(\Sigma) \rtimes_\sigma \ZZ^p$, where $\sigma$ is the multiplier on 
$\ZZ^p$ associated to  $\Theta$, takes on values on spectral gaps in the {\em magnetic gap-labelling group}. The {\em magnetic frequency group} is defined as an explicit  
countable subgroup of $\RR$ involving Pfaffians of $\Theta$ and its sub-matrices. We conjecture that the magnetic gap labelling group is a subgroup of the magnetic frequency group.
 We give evidence for the validity of our conjecture in 2D, 3D, the Jordan block diagonal case and the periodic case in all dimensions.
\end{abstract}

\keywords{measured twisted foliated index theorem,
magnetic Schr\"odinger operators, aperiodic potentials, aperiodic tilings, Cantor set, minimal actions,
magnetic spectral gap-labelling conjectures, operator K-theory, invariant Borel probability measure, trace, twisted crossed product algebras.
}

\subjclass[2010]{Primary 58J50; Secondary 46L55, 46L80, 52C23, 19K14, 81V70}

\maketitle


\section{Introduction}


The gap-labelling theorem was originally conjectured by Bellissard \cite{Bellissard4} in the late 1980s. 
It concerns the labelling of gaps in the spectrum of a 
Schr\"odinger operator (in the absence of a magnetic field) by the elements of a subgroup of $\RR$ 
which results from pairing the $K_0$-group of the noncommutative analog for the Brillouin zone with the tracial state defined by the probability measure on the hull.
The problem arises in a mathematical version of solid state physics in the context of aperiodic tilings. 
Its three proofs, discovered independently by the authors of \cite{BO,KP,BBG} all concern the proof of a statement in K-theory.
Earlier results include the proof of the gap-labelling conjecture in 1D \cite{Bellissard5}, 2D \cite{BCL,VanEst} and in 3D \cite{BKL}. A more detailed account of the
history of gap-labelling theorems can be found in Appendix \ref{history}.

In the presence of a non-zero constant magnetic field in Euclidean space, the gap-labelling conjecture 
is much trickier to state, even though it was known to be the more interesting problem in spectral theory and in condensed matter physics since the 1980s, cf.~\cite{Bellissard85}. 
Here, we manage to give, for the first time, a
 precise formulation of conjectures for the magnetic gap-labelling group in all dimensions which encompass all previously known results. More precisely, in this paper we initiate the study of the gap-labelling group in the case of the magnetic Schr\"odinger operator on Euclidean space 
$\RR^p$ 
with disorder set a Cantor set $\Sigma$ under a non-zero magnetic field
$B=\frac{1}{2} dx^t \Theta dx$, where $\Theta$ is a $(p\times p)$ skew-symmetric matrix. We believe that proving (or disproving) our conjectures would constitute an important step in the understanding of aperiodic tilings  under a constant magnetic field. 
Given a $\Z^p$-invariant  probability measure $\mu$ on $\Sigma$, the corresponding Integrated Density of States of any self-adjoint operator
affiliated to the twisted crossed product  algebra $C(\Sigma) \rtimes_\sigma \ZZ^p$ takes  values on spectral gaps in an explicit  
countable subgroup of $\RR$
involving Pfaffians of $\Theta$ and its sub-matrices
 that we describe in Conjecture \ref{mainconj}, where $\sigma$ is the multiplier on 
$\ZZ^p$ associated to  $\Theta$. The physical interpretation of one side of our conjecture 
is a natural extension to the magnetic case of the notion of the group of frequencies studied in solid physics; see  \cite{Bellissard4}.  
In 2D, the magnetic gap-labelling group applies to the magnetic Schr\"odinger operators that are the Hamiltonians which are pertinent to the study of the
integer quantum Hall effect, cf.~\cite{BESB} and the bulk-boundary correspondence, cf.~\cite{Kell,MT}.

Upon defining the {\em magnetic gap-labelling group} and the {\em magnetic frequency group} in Definition \ref{defn:magnetic-groups}, our gap-labelling Conjecture \ref{mainconj}
states that for minimal actions of $\Z^p$ on a Cantor set, the magnetic gap-labelling group is a subgroup of the magnetic frequency group. Our 
gap-labelling Conjecture \ref{mainconj-minimal}
states that for {\em strongly} minimal actions of $\Z^p$ on a Cantor set, the magnetic gap-labelling group coincides with the magnetic frequency group.
Our main achievements in this paper, besides the precise statement of the conjectures,  are  complete solutions to the conjectures in the 2D case and also in the 3D case.
We also give other evidence  that our conjectures should hold in higher dimensions such as in the periodic case and the Jordan block diagonal case.   

The heart of our approach is a new index theorem, named the twisted index theorem for foliations, see subsection \ref{FoliatedIndex}. We also use the Baum-Connes conjecture with coefficients, which is known to be true for the relevant free abelian discrete group $\ZZ^p$ (cf.~\cite{BCH}). In addition, the integrality of {\em all} the components of the Chern character is needed to complete the proof of our magnetic gap-labelling conjecture, and is the trickiest part of the proofs of our theorems. This is in contrast to the proof of Bellissard's gap-labelling conjecture, where only the integrality of the {\em top} dimensional component of the Chern character is needed, and
it explains in a nutshell the difference in complexity of the two conjectures.
Direct cohomological computations in the 3D case enabled us to prove Conjecture \ref{mainconj-minimal}
(see Corollary \ref{3Dconj2})
for {\em strongly minimal systems}, a notion that is introduced in Definition \ref{stronglyminimal}. {We have included  the complicated combinatorics that proves  an independently interesting result in Theorem \ref{Morphism} and which makes possible a better
understanding of our magnetic gap-labelling conjectures.}
The proof of this theorem is a {\em tour de force} computation, and although our method extends to all dimensions, it only allowed us to deduce 
Conjecture \ref{mainconj-minimal} under an extra hypothesis on the corresponding clopen subsets. The strategy of proving the results in Section \ref{sect:3D}
and Section \ref{3D1} are outlined at the beginning of these sections.

In a forthcoming paper, we plan to weaken the hypotheses of Theorem \ref{Morphism} and to systematically study the magnetic gap-labelling group
in all higher dimensions.

It is worth pointing out  that  the  proofs mentioned earlier  of the Bellissard gap-labelling conjecture, which is the special case of the zero magnetic field, use the integrality of the top degree component of the Chern character for {\em{even}} dimensions. 
Since no published proof of this Chern-integrality result is known for general minimal $\Z^p$-actions on Cantor sets, we 
{\em do not} use it in the present paper. Notice however that the Chern-integrality condition is fulfilled for low dimensions (2D and 3D) and it was also proved in all even dimensions whenever the relevant $K$-theory is torsion-free, see  \cite{BO, BO-CRAS, BO-JFA}.  \bigskip

\noindent{\it Acknowledgements}. The authors  wish to thank Jean Bellissard for his encouraging comments 
and remarks on the first version of this paper and for providing the detailed history of the gap-labelling conjecture, 
which has now been incorporated into the introduction and also Appendix \ref{history}. The authors also thank Herv\'e Oyono-Oyono, Paolo Piazza and  Guo Chuan Thiang for helpful discussions. 
MB thanks the participant feedback on his talk at the {\em Workshop on KK-theory, Gauge Theory and Topological Phases}, at the Lorentz Center, Leiden, 6-10 March, 2017.
Particular thanks to M. Braverman, E. Prodan, R. Nest, J. Kellendonk, R. Meyer.
VM thanks the participant feedback on his talk at the {\em  2nd Australia-Japan Conference on Geometry, Analysis and their Applications}, Shirahama hotel, Japan, 
2-3 February, 2017. Particular thanks to T. Kato and S. Matsuo. {{Finally, the authors sincerely thank the referees for their in-depth reading of the first version of this paper and for their helpful comments and suggestions.}}\\

MB thanks the French National Research Agency for support via the ANR-14-CE25-0012-01 (SINGSTAR), and  VM thanks the Australian Research Council  for support
via the ARC Laureate Fellowship FL170100020.

\tableofcontents


\section{Magnetic Schr\"odinger operators}


We begin by reviewing the construction of  magnetic Schr\"odinger operators.
Consider Euclidean space $\RR^{p}$ equipped with its usual
metric $\sum_{j=1}^{p} dx_j^2$. 
Consider the uniform magnetic field 
$B = \frac{1}{2} dx^t \Theta dx =  \frac{1}{2} \sum_{j,k} \Theta_{jk} dx_j \wedge dx_k$, where $\Theta$ is a 
constant $(p\times p)$ skew-symmetric matrix.
The Euclidean group $G=\RR^{p} \rtimes SO(p)$ acts transitively on $\RR^{p}$ by affine transformations.
The torus $\TT^{p}$ can be realised as the
quotient of
$\RR^{p}$ by the action of its fundamental group $\ZZ^{p}$.

Let us now pick a 1-form $\eta$ such that $d\eta = B$. 
This is always possible since $B$ is a closed 2-form and 
$\RR^{p}$ is contractible.
We may regard $\eta$ as defining a connection
$\nabla = d+i\eta$ on the trivial line bundle $\cL$ over $\RR^{p}$, 
whose curvature is $i B$.
Physically we can think of $\eta$ as an electromagnetic vector potential for
the uniform magnetic field $B$ normal to $\RR^{p}$.
Using the Riemannian metric,  the Hamiltonian of an
electron in this field is given in terms of suitable units by
$$H_\eta = \frac 12\nabla^\dagger\nabla = \frac 12(d+i\eta)^\dagger(d+i\eta),$$
where $\dagger$ denotes the adjoint. 
In a real material this Hamiltonian would be modified by the addition of a {\em real-valued}
potential $V$, and called a magnetic Schr\"odinger operator $H_{\eta, V} = H_\eta +V$.
The spectrum of the unperturbed Hamiltonian $H_\eta$
for $\eta = \frac 12\sum \Theta_{jk} x_j  dx_k$ has been computed by physicists.
We record that it has discrete eigenvalues with infinite multiplicity. 
Any $\eta$ is cohomologous to $ \frac 12 \sum \Theta_{jk} x_j  dx_k$ since they both have
$B$ as differential, and forms differing by an exact form $d\phi$ give
equivalent models: in fact, multiplying the wave functions by
$\exp(i\phi)$ shows that the Hamiltonians for $\eta$ and $ \frac 12\sum \Theta_{jk} x_j dx_k$ are
unitarily equivalent. 
This equivalence also intertwines the $\ZZ^{p}$-actions so that the spectral
densities for the two models also coincide.
However, it is the perturbed Hamiltonian $H_{\eta,V} = H_\eta +V$ which
is the key, and the spectrum
of this is unknown for general $\ZZ^{p}$-aperiodic $V$. 
Set $\eta = \frac 12\sum \Theta_{jk} x_j  dx_k$ from now on.
For $\gamma  \in \ZZ^{p}$,
consider the function on $\RR^{p}$ given by   $\psi_\gamma(x) =  \frac{1}{2}\sum\Theta_{jk} \gamma_j x_k$. 
It satisfies $\gamma^*\eta-\eta = d \psi_\gamma$. Also, 
$\psi_\gamma(0)=0$ for all $\gamma  \in \ZZ^{p}$ 
and 
$\psi_\gamma(\gamma') = \frac{1}{2} \sum \Theta_{jk} \gamma_j \gamma'_k$ for $\gamma'  \in \ZZ^{p}$.
Define a projective unitary action $T^\sigma$ of $\ZZ^{p}$ on $L^2(\RR^{p})$ as follows.
\begin{align}
U_\gamma(f)(x) & = f(x-\gamma),\\
S_\gamma(f)(x) & = \exp(-2\pi i \psi_\gamma(x)) f(x),\\
T_\gamma^\sigma& =U_\gamma\circ S_\gamma.
\end{align}
Then the operators $T_\gamma^\sigma$, also known as {\em magnetic translations}, satisfy $T^\sigma_e=\rm{Id}, \,\, T^\sigma_{\gamma_1}
T^\sigma_{\gamma_2} = \sigma(\gamma_1, \gamma_2)T^\sigma_{\gamma_1\gamma_2}$,
where $\sigma(\gamma, \gamma') = \exp\left(-2\pi i \psi_\gamma(\gamma')\right)$  is a {\em multiplier}
on $\ZZ^{p}$ satisfying,
\begin{enumerate}
\item $\sigma(\gamma, e)=\sigma(e, \gamma)=1$ for all $\gamma \in \ZZ^{p}$;
\item $\sigma(\gamma_1, \gamma_2)\sigma(\gamma_1\gamma_2, \gamma_3)=\sigma(\gamma_1, \gamma_2\gamma_3)\sigma(\gamma_2, \gamma_3)$
for all $\gamma_j \in \ZZ^{p},\, j=1,2,3.$
\end{enumerate}
Note that with the above choices,  we also ensure the relation $\sigma(\gamma_1, \gamma_2) = \overline{\sigma(\gamma_2, \gamma_1)},$ and in particular $\sigma(\gamma, \gamma)=1,\, \forall\,\gamma\in \ZZ^p$.
An easy calculation shows that  $T^\sigma_\gamma H_\eta = H_\eta T^\sigma_\gamma$. Also, we shall assume that $V$ is aperiodic with hull equal to a Cantor set $\Sigma$ and we conclude that the magnetic Schr\"odinger operator
$H_{\eta,V} $ is also aperiodic with hull equal to $\Sigma$.

{{According to Bellissard's gap-labelling {{theorem}}, \cite{Bellissard4}, under usual  conditions on the aperiodic potential $V$, the $C^*$-algebra of observables is associated with a minimal dynamical system and is defined as follows. Fix $z\in \C\setminus \R$ and  consider the strong closure $X$ of the space of all conjugates of the resolvent $(H-z {\rm I})^{-1}$ under the magnetic translations $(T_a^\sigma)_{a\in \R^p}$. Then $X$ is independent of the choice of $z$ up to homeomorphism, and  it is a compact space with a minimal action of $\R^p$, through the same magnetic translations. The $C^*$-algebra of observables is then the twisted crossed product $C^*$-algebra $C(X)\rtimes_{\sigma} \R^p$. For the particular tilings we are interested in, and which include quasi-crystals \cite{BCL}, this latter $C^*$-algebra is Morita equivalent to a discrete twisted crossed product algebra $C(\Sigma) \rtimes_{\sigma} \ZZ^{p}$ for some Cantor space $\Sigma$. }}

 If $\lambda\in \RR$ is in a spectral gap
of $H_{\eta, V} $, then the Riesz projection $\chi_{(-\infty, \lambda]}(H_{\eta,V})$ can be expressed as $p_\lambda(H_{\eta, V})$ where $p_\lambda$ is a  smooth compactly supported
function which is identically equal to 1 in the interval $[{\inf spec} (H_{\eta, V}), \lambda]$, where ${\inf spec} (H_{\eta, V})$ denotes the bottom of the spectrum of the 
self-adjoint operator $H_{\eta, V}$ that is bounded below since $V$ is bounded below by our hypotheses. Assume also that 
the support of $p_\lambda$ is contained in the interval $[-\varepsilon +{\inf spec} (H_{\eta, V}), \lambda+ \varepsilon]$ for some $\varepsilon>0$.  
Then  $p_\lambda(H_{\eta, V})\in C(\Sigma) \rtimes_{\sigma} \ZZ^{p}\otimes\cK$
and therefore one obtains an element, 
$$[p_\lambda(H_{\eta, V})]\in K_0( C(\Sigma) \rtimes_{\sigma} \ZZ^{p}).$$
A standard assumption on the physical model is the {\em gap hypothesis}, which is that the Fermi level of the physical system described by $H_{\eta, V}$ lies in a spectral gap. We shall enunciate a precise conjecture generalising a famous conjecture 
of Bellissard in the absence of a magnetic field. We shall also give a complete proof of our conjecture in low dimensions, and give evidence for it in all dimensions.\\


\section{The magnetic gap-labelling group}\label{Conjecture}


In this section, we introduce and start our study of what we shall call the magnetic gap-labelling group.  Inspired by Bellissard's gap-labelling conjecture \cite{Bellissard4}, we show that the magnetic gap-labelling conjecture can be completely stated in the language of minimal totally disconnected  dynamical systems. More precisely, we assume that we are given $p$ commuting homeomorphisms $T=(T_j)_{1\leq j\leq p}$ of a Cantor space $\Sigma$ which preserve a Borel probability measure $\mu$. So $\Sigma$ is compact totally disconnected without isolated points and these homeomorphisms  then generate a minimal  action of the abelian free group $\Z^p$  so that $T_j$ corresponds to the action of the 
canonical basis vector $\psi_j\in \Z^p$.

The subgroup of the real line $\R$ which is generated by $\mu$-measures of clopen subspaces of $\Sigma$ is denoted $\Z[\mu]$. This is known as the group of frequencies of  the aperiodic potential associated with the quasi-crystal, i.e.  appearing in the Fourier expansion of that potential. It can also be seen as the image under (the integral associated with) the probability measure $\mu$ of $C(\Sigma, \Z)$, the group of continuous integer valued functions on $\Sigma$. That is,
$$
\Z[\mu] = \left\{\int_\Sigma f(z) d\mu(z) \Big| f\in C(\Sigma, \Z)   \right\} = \mu(C(\Sigma, \ZZ))
$$ 
Let $I$ be an ordered subset of $\{1,\ldots,p\}$ with an even number of elements, 
and let $C(\Sigma, \ZZ)_{\ZZ^{I^c}}$ denote the coinvariants of $C(\Sigma, \ZZ)$ under the action of the subgroup $\ZZ^{I^c}$
of $\ZZ^p$, where $I^c$ denotes the index set that is the complement to $I$. 
{{Let $\left(C(\Sigma, \ZZ)_{\ZZ^{I^c}}\right)^{\ZZ^I}$ 
denote the subset of $C(\Sigma, \ZZ)_{\ZZ^{I^c}}$ composed of those $\ZZ^{I^c}$-coinvariant classes in $C(\Sigma, \ZZ)_{\ZZ^{I^c}}$ which are invariant under the induced action of the subgroup $\Z^I$. }}
Define 
$$
\ZZ_I[\mu] = \mu\left(\left(C(\Sigma, \ZZ)_{\ZZ^{I^c}}\right)^{\ZZ^I}\right).
$$
{{Notice that 
$$
\ZZ_{\{1,\cdots, p\}}[\mu]={\ZZ \;  \subset  \; \ZZ_I[\mu] \; \subset  \; \ZZ[\mu]} = \ZZ_{\emptyset}[\mu].
$$}}

\subsection{Labelling the gaps}
Let $\sigma$ be a multiplier of $\Z^p$ which is associated with the skew symmetric matrix $\Theta$. The $\Z^p$ invariant probability measure $\mu$ yields a regular trace $\tau^\mu$ on the twisted crossed product $C^*$-algebra $C(\Sigma)\rtimes_\sigma \Z^p$, which is by fiat the operator norm completion of the $*$-algebra of compactly supported continuous functions
$C_c (\Z^p\times \Sigma)$ acting via the left regular representation on the Hilbert space $L^2(\Sigma, d\mu) \otimes \ell^2(\ZZ^d)$.
The trace $\tau^\mu$ is defined  on the dense subalgebra $C_c (\Z^p\times \Sigma)$ by the equality 
\beq\label{trace}
\tau^\mu (f)=\left\langle\mu, f_0\right\rangle  = \int_\Sigma f_0(z) d\mu(z), \quad \text{  where  } f_0: z\mapsto f(0, z), 
\eeq
with $0$ the zero element of $\Z^p$. Hence $\tau^\mu$ induces a trace
$$
\tau^\mu (= \tau^\mu_*)\; : \; K_0(C(\Sigma)\rtimes_\sigma \Z^p ) \longrightarrow \R.
$$
The $C^*$-algebra $C(\Sigma)\rtimes_\sigma \Z^p$ (or rather a Morita equivalent one) is a receptacle for the spectral projections onto spectral gaps of the magnetic Schr\"{o}dinger operator associated with our system, as explained earlier. Any gap in its spectrum  may therefore be labelled by the trace of the corresponding projection.

\begin{definition}\label{defn:magnetic-groups}
The range of the trace $\tau^\mu$
$$
 \Range (\tau^\mu) \; := \; \tau^\mu\left(K_0(C(\Sigma)\rtimes_\sigma \Z^p )\right) \; \subset \; \R
$$ 
is  what is natural to call  the {\em{magnetic gap-labelling group}}.  \\

The countable subgroup of $\R$ given by:
$$
\sum_{0\leq |I|\leq p} {\rm Pf}(\Theta_I)\Z_I[\mu],
$$
is what is natural to call the {\em magnetic frequency group}.
\end{definition}

It is easy to see the gap-labelling group (without magnetic field) is contained in the magnetic gap-labelling group.

We next formulate a conjecture that gives an explicit calculation of the magnetic gap-labelling group, and later give evidence for its validity.\\
\medskip

\begin{conjecture}[{\bf Magnetic gap-labelling conjecture: minimal actions}]\label{mainconj}
Let $\Sigma$ be a Cantor set with a minimal action of $\ZZ^p$ that preserves a Borel probability measure $\mu$.
Let $\sigma$ be the multiplier on $\ZZ^p$ associated to a skew-symmetric $(p\times p)$ matrix $\Theta$. \\
Then the magnetic gap-labelling group is contained in the magnetic frequency group.
\end{conjecture}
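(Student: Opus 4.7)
The plan is to factor $\tau^\mu_*$ on $K_0(C(\Sigma)\rtimes_\sigma\Z^p)$ through the leafwise Chern character and then verify the integrality of each of its components when paired with $\mu$. The geometric backbone is the hull $\V = \Sigma\times_{\Z^p}\R^p$, a foliated solenoidal space whose leaves are copies of $\R^p$ and whose transversal is the Cantor set $\Sigma$; the $\Z^p$-invariant probability measure $\mu$ extends to a transverse measure for the leafwise foliation.

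First, I invoke the Baum-Connes conjecture with coefficients for $\Z^p$ (known by \cite{BCH}) together with a twisted Connes-Thom isomorphism to identify
$$K_0\bigl(C(\Sigma)\rtimes_\sigma\Z^p\bigr)\;\cong\;K^0(\V;\,\cL),$$
where $\cL$ is a leafwise complex line bundle on $\V$ with leafwise curvature $B = \tfrac12\,dx^t\Theta\,dx$ implementing the multiplier $\sigma$. Any spectral projection onto a gap of $H_{\eta,V}$ is thereby represented by a class in this twisted leafwise K-theory. Second, I apply the twisted index theorem for foliations of Section \ref{FoliatedIndex} to express $\tau^\mu_*$ in terms of the leafwise Chern character. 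Expanding the curvature exponential $\exp(B)$ and grouping the $2$-forms $dx_j\wedge dx_k$ by even-cardinality ordered subsets of $\{1,\ldots,p\}$ yields
$$\tau^\mu_*([p])\;=\;\sum_{\substack{I\subseteq\{1,\ldots,p\}\\ |I|\text{ even}}}{\rm Pf}(\Theta_I)\,\bigl\langle\Ch_{|I^c|}([p]),\,\mu\bigr\rangle,$$
with $\Ch_{|I^c|}([p])$ the degree-$|I^c|$ component of the leafwise Chern character and $\langle\cdot,\mu\rangle$ transverse integration against $\mu$. The Pfaffians arise because $B^k/k!$ restricted to the $I$-directions with $|I|=2k$ equals ${\rm Pf}(\Theta_I)\,dx_I$. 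Note the sanity check: for $I=\emptyset$ one recovers Connes' top-degree pairing, and for $I=\{1,\ldots,p\}$ (only when $p$ is even) one picks out the rank, which by minimality contributes ${\rm Pf}(\Theta)\cdot\Z$.

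The heart of the argument, and the main obstacle, is to establish the integrality in each degree separately: for every even-cardinality ordered $I$,
$$\bigl\langle\Ch_{|I^c|}([p]),\,\mu\bigr\rangle\;\in\;\Z_I[\mu]\;=\;\mu\bigl((C(\Sigma,\Z)_{\Z^{I^c}})^{\Z^I}\bigr).$$
Via a \v{C}ech description of longitudinal integer cohomology on $\V$, each Chern component lifts to an integer class, and its pairing with $\mu$ must be shown to land precisely in the claimed coinvariant/invariant subgroup associated with the splitting $\Z^p=\Z^I\oplus\Z^{I^c}$. This is substantially harder than the classical Bellissard situation, where only the top component $\Ch_p$ is relevant and its integrality is Connes' theorem; in the magnetic case \emph{every} intermediate degree contributes, and the integer lift must be matched with exactly the correct $\Z^I/\Z^{I^c}$ structure. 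The paper addresses this by direct cohomological computations in the cases $p=2,3$, by a complete resolution for the strongly minimal actions of Definition \ref{stronglyminimal}, and by the combinatorial Theorem \ref{Morphism}, whose \emph{tour de force} computation extracts the structural inclusion in higher $p$ under an additional hypothesis on the relevant clopen subsets.
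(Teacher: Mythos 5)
The statement you are proving is Conjecture \ref{mainconj}, and the paper itself does not prove it in general: its actual theorems establish the containment only for $p\le 3$ (Corollary \ref{2Dgapthm} in 2D, Theorem \ref{Gap3d} in 3D), plus the reduction lemmas and the partial higher-dimensional evidence of Theorem \ref{Morphism}. Your proposal correctly reproduces the paper's general framework --- the identification $K_0(C(\Sigma)\rtimes_\sigma\Z^p)\cong K^p(X)$ via the twisted Connes--Thom isomorphism (Theorem \ref{twistedCT}; note the paper uses the \emph{untwisted} K-theory of $X$, since the leafwise line bundle with curvature $B$ is trivial, rather than a twisted group $K^0(\V;\cL)$), followed by the measured twisted foliated index theorem of Section \ref{FoliatedIndex}, which produces exactly the Pfaffian expansion you write down. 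Up to that point you are aligned with the paper.

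The genuine gap is that the step you yourself call ``the heart of the argument'' is asserted, not carried out, and it is precisely the open content of the conjecture. Two distinct difficulties are hidden there. First, your claim that ``via a \v{C}ech description of longitudinal integer cohomology each Chern component lifts to an integer class'' is the Chern-integrality property that the paper explicitly declines to use, because no published proof of it exists for general minimal $\Z^p$-actions on Cantor sets (it is known in low dimensions and, in even dimensions, when the relevant $K$-theory is torsion free). Second, even granting integrality, landing in $\Z_I[\mu]=\mu\bigl((C(\Sigma,\Z)_{\Z^{I^c}})^{\Z^I}\bigr)$ requires matching each Chern component, under the Poincar\'e-duality isomorphisms $\Psi$, with the specific invariant-inside-coinvariant structure attached to the splitting $\Z^p=\Z^I\oplus\Z^{I^c}$; this is what the paper achieves in 3D only through the explicit cup-product and exact-sequence analysis of Lemmas \ref{alpha} and \ref{beta}, and no mechanism for doing this in general dimension is supplied in your proposal. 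Deferring both points to ``the paper addresses this for $p=2,3$'' means your text is a restatement of the conjecture's strategy and known cases, not a proof; as written it would also need to be corrected so as not to presuppose the unproved integrality of all Chern components.
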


\medskip

{{So, more explicitly, we conjecture the following:}
\begin{enumerate}

{{\item If $p$ is even, then the magnetic gap-labelling group is contained in the countable subgroup of $\R$ given by:
$$
 \ZZ[\mu]  +  \sum_{0<|I|<p} {\rm Pf}(\Theta_I)\Z_I[\mu] +  {\rm Pf}(\Theta)\Z.
$$}}
{{\item If $p$ is odd, then the magnetic gap-labelling group is contained in the countable subgroup of $\R$ given by:
$$
 \ZZ[\mu]  +  \sum_{0<|I|\le p} {\rm Pf}(\Theta_I)\Z_I[\mu] .
$$}}
\end{enumerate}

\medskip

In both cases, $I$ is an ordered subset of $\{1,\ldots,p\}$ with an even number of elements, 
$\Theta_I$ denotes the skew-symmetric submatrix of 
$\Theta=(\Theta_{ij})$ with $i,j \in I$,\, and ${\rm Pf}(\Theta_I)$ denotes the Pfaffian of $\Theta_I$. 

We mention here the Pfaffian ${\rm Pf}(\Theta)$
was recognised first as the top degree coefficient in the range of the trace of the noncommutative torus associated to $\Theta$ in \cite{Elliott}, whereas the other terms were only given in terms of the coefficients of $\Theta$.  The complete set of coefficients in terms of 
the Pfaffians of submatrices of $\Theta$ is given over here for the first time in Proposition \ref{prop:periodic}, and is also due to \cite{MQ86} in another context.

\begin{remark}
When $p$ is even, we note that $C(\Sigma, \ZZ)^{\ZZ^p} = \ZZ$ since the $\ZZ^p$ action on $\Sigma$ is minimal,
which accounts for why the last term in part (1) above is a multiple of $\Z$. More precisely, when $p$ is even,
by the Baum-Connes map $\mu_\theta$ and the measured foliated twisted $L^2$-index theorem (see Section \ref{FoliatedIndex}),
$$
\tau_\mu (\mu_\theta(E)) = \sum_I {\rm Pf}(\Theta_I) \int_X dx_I \wedge ch(E)
$$
where $I$ is an ordered subset of $\{1,\ldots,p\}$ with an even number of elements, $dx_I$ is the differential form of degree equal to $|I|$
on the torus $\TT^p$ but lifted to $X= \Sigma \times_{\ZZ^p} \RR^p$, which is the total space of a fibre bundle over the torus $\TT^p$ with fibre 
the Cantor set $\Sigma$ and
$E$ is a vector bundle over $X$ whose Chern character is defined as in \cite{MooreSchochet}.
Consider the top degree term,
\begin{equation}\label{topterm}
{\rm Pf}(\Theta) \int_X dx_1 \wedge dx_2 \wedge... \wedge dx_p \, {\rm rank}(E). 
\end{equation}
Since the action of $\ZZ^p$ on the Cantor set $\Sigma$ is minimal, $X$ is therefore
a {\em connected} space  cf.~Lemma 3~\cite{BO-JFA}. So the rank of the vector bundle $E$ on $X$ is constant and
\eqref{topterm} now becomes
\begin{equation}
{\rm Pf}(\Theta) {\rm rank}(E) \rm {vol}(\TT^p) \mu(\Sigma) = {\rm Pf}(\Theta) {\rm rank}(E),
\end{equation}
since $\mu$ is a probability measure and the volume of the torus is normalised to equal 1. This implies that the last term in Conjecture \ref{mainconj} is always of the form ${\rm Pf}(\Theta) \ZZ$.

\end{remark}
\begin{remark}
We mention that for the dynamical systems arising from the cut-and-project quasi-crystals, the action is moreover free. One might thus tackle our conjecture under the extra freeness assumption but we do not make this assumption in the present paper, see again \cite{BO-JFA}. 
\end{remark}

In order to ensure the equality in the previous conjecture, it seems necessary to strengthen the minimality  assumption.
{{In particular, we mention the following interesting question:}}

\medskip

\begin{problem}
{{Under which (dynamical) condition, can one replace containment in Conjecture \ref{mainconj} by equality? 
}}
\end{problem}
\medskip

{{Of particular interest is the case of strongly minimal actions which are defined as follows. }}

\begin{definition}\label{stronglyminimal}
{{When $p$ is odd, an action of $\Z^p$ is said to be {\em strongly minimal} when for every generator $T_j$, the infinite cyclic group generated by it 
$\langle T_j\rangle$ acts minimally. When $p$ is even, an action of $\Z^p$ is said to be {\em strongly minimal} when for every pair of generators $(T_i, T_j)_{i\not = j}$, the $\Z^2$ group generated by the pair $\langle T_i, T_j \rangle$ acts minimally.}}
\end{definition}

\begin{remark}
{{Strongly minimal actions are clearly minimal. In the 2D case, minimal actions are strongly minimal. }}
\end{remark}

{{For $p\leq 3$, we prove that if the action of $\ZZ^3$ on  $\Sigma$ is strongly minimal, then the answer is yes. The strong minimality condition might though be too strong for 3D  (see Section \ref{3D1}), or even too weak for higher dimensions. It is surprising that despite the case of the zero magnetic field, where the missing containment relation always holds without any extra-condition, when the magnetic field is non-zero,  the answer to the above problem is not clear. As a starting point, we then state:
}}

\medskip
\begin{conjecture}[{\bf Magnetic Gap Labelling conjecture, strongly minimal actions}]\label{mainconj-minimal}
If the action if strongly minimal, then 
the magnetic gap labelling group coincides with the magnetic frequency group.
\end{conjecture}
\medskip

 \begin{remark}
{{As mentioned before, in the 2D case, minimal actions are strongly minimal and we prove Conjecture \ref{mainconj-minimal}
 in this case in Section \ref{sect:2D}, 
 in the 3D case in Section \ref{sect:3D} and also in the periodic case in Section \ref{Atheta}.}}
\end{remark}

\begin{remark}
There are several motivations for the magnetic gap-labelling conjecture, starting with Bellissard's gap-labelling conjecture \cite{Bellissard4}, 
the formula (see section 1 in \cite{MQ86})
$$
e^{\frac{1}{2}dx^t \Theta dx} = \sum_I {\rm Pf}(\Theta_I) dx^I
$$
in the notation above, the magnetic gap-labelling group when the potential of the magnetic Schr\"odinger operator is purely periodic which reduces  to the computation of the range of the trace on the noncommutative torus $A_\Theta$ using the formula above (see Section \ref{Atheta}),
and finally the proof of both the magnetic gap-labelling conjectures in 2D {{and  in 3D}}, that we give in this paper.
\end{remark}

\subsection{Some reductions of Conjectures \ref{mainconj} and \ref{mainconj-minimal}}

{{Let us first mention that it suffices to prove Conjecture \ref{mainconj}  and Conjecture \ref{mainconj-minimal} for $p$ even. }}

\begin{lemma}\label{lem:even}
{{If Conjecture \ref{mainconj} is true for some $p_0 \in \NN$, then it is also true for all $p \in \NN$ such that $p\le p_0$.
In particular, if Conjecture \ref{mainconj} is true for all $p \in 2\NN$, then it is true for all $p\in \NN$. 
If Conjecture \ref{mainconj} is true for all $p \in 2\NN +1$, then it is true for all $p\in \NN$. }}
\end{lemma}

\begin{proof}\
By an easy trick, see for instance \cite{BO-JFA}, given a minimal Cantor system of dimension $p$ as above we may embed $\Z^p$ in the group $\Z^{p+1}$ by using $\iota (n)=(n, 0)$ and notice that $\Z^{p+1}$  acts minimally on $\Sigma$ by using  the $\Z^p$-action of the projection onto the $p$-first factors. We get in this way a new minimal Cantor system of dimension $p+1$. The multiplier $\sigma$ on $\Z^p$ then gives rise to the multiplier  $\iota_*\sigma$ on $\Z^{p+1}$ which is given by
$$
\iota_*\sigma ((n, n_{p+1}), (m, m_{p+1}))=  \sigma (n, m).
$$
Then $\iota_*\sigma$ is associated with the skew symmetric matrix 
$$
\iota_*\Theta = \left(   \begin{array}{cc} \Theta & 0 \\ 0 & 0 \end{array}\right).
$$
Now, the above inclusion $\iota$ yields an inclusion $i: C(\Sigma)\rtimes_\sigma \Z^p\hookrightarrow C(\Sigma)\rtimes_{\iota_*\sigma} \Z^{p+1}$ and it is easy to check that  the following  diagram commutes
$$
\begin{CD}
K_0 (C(\Sigma)\rtimes_\sigma \Z^p) @> {i_*}  >> K_0 (C(\Sigma)\rtimes_{\iota_*\sigma} \Z^{p+1})\\
@V{\tau^\mu_* }VV     @VV {\tau^\mu_*} V\\
\R @> {=} >> \R
\end{CD}
$$
 Hence if we assume that Conjecture \ref{mainconj} is true for $p+1$, then we get 
\beq\label{eqn:contain}
\tau^\mu_* \left(K_0(C(\Sigma)\rtimes_\sigma \Z^p)\right)  = (\tau^\mu_*\circ i_*) \left(K_0(C(\Sigma)\rtimes_{\iota_*\sigma} \Z^{p+1})\right) \subset \sum_{0\leq \vert I\vert \leq p+1}  {\rm Pf}((\iota_*\Theta)_I) \Z_I[\mu].
  \eeq
  But from the expression of $\iota_*\Theta$, we see that if $I$ contains $p+1$ then ${\rm Pf} ((\iota_*\Theta)_I) = 0$ while for $I\subset \{1, \cdots, p\}$, we have ${\rm Pf}((\iota_*\Theta)_I)={\rm Pf}(\Theta_I)$. 
  \end{proof}

The similar proof shows:

\begin{lemma}\label{Even}
{{If Conjecture \ref{mainconj-minimal} is true for some $p_0 \in 2\NN$, then it is also true for $p_0-1$.
In particular, if Conjecture \ref{mainconj-minimal} is true for all $p \in 2\NN$, then it is true for all $p\in \NN$.  }}
\end{lemma}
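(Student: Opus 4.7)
The plan is to adapt the embedding argument of the preceding lemma, with the additional ingredients that strong minimality is preserved under the embedding and that equality (and not merely containment) of the trace images can be transferred. Given a strongly minimal $\Z^{p_0-1}$-action on $\Sigma$, so each $\langle T_j\rangle$ acts minimally since $p_0-1$ is odd, I extend to a $\Z^{p_0}$-action by declaring $T_{p_0}=\Id$. For the enlarged even-dimensional system, strong minimality requires every pair $\langle T_i,T_j\rangle$ to act minimally: for $i,j\le p_0-1$ this holds because $\langle T_i,T_j\rangle \supseteq \langle T_i\rangle$ is already minimal, and in the corner case $\langle T_i,T_{p_0}\rangle=\langle T_i\rangle$. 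Hence Conjecture~\ref{mainconj-minimal} applies at $p_0$.

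Since $\iota_*\sigma$ does not involve the new coordinate and $T_{p_0}=\Id$, the new generator is central and untwisted in the crossed product, giving a canonical tensor decomposition
\[
C(\Sigma)\rtimes_{\iota_*\sigma}\Z^{p_0}\;\cong\; A\otimes C(\T), \qquad A:=C(\Sigma)\rtimes_\sigma\Z^{p_0-1},
\]
under which the trace on the big algebra corresponds to $\tau^\mu_A\otimes\int_\T$. Exactly as in the preceding lemma, the block form of $\iota_*\Theta$ gives ${\rm Pf}((\iota_*\Theta)_I)=0$ whenever $p_0\in I$ and $|I|\ge 2$, and ${\rm Pf}((\iota_*\Theta)_I)={\rm Pf}(\Theta_I)$ otherwise, while the triviality of $T_{p_0}$ identifies $\Z_I[\mu]$ computed in either framework for $I\subset\{1,\ldots,p_0-1\}$. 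Consequently the right-hand side of Conjecture~\ref{mainconj-minimal} at $p_0$ equals its right-hand side at $p_0-1$.

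The new ingredient is the identity
\[
\tau^\mu\bigl(K_0(A\otimes C(\T))\bigr)=\tau^\mu_A\bigl(K_0(A)\bigr).
\]
The inclusion $\supseteq$ comes from the trace-preserving splitting $i:a\mapsto a\otimes 1$, already used in the preceding lemma. For the reverse inclusion, any class in $K_0(A\otimes C(\T))$ is represented by a continuous projection-valued map $\theta\mapsto p(\theta)\in M_n(A)$; by homotopy invariance of $K$-theory the class $[p(\theta)]\in K_0(A)$ is independent of $\theta$, so $\tau^\mu_A(p(\theta))$ is a constant function of $\theta$, and
\[
(\tau^\mu_A\otimes {\textstyle\int_\T})([p])=\int_\T \tau^\mu_A(p(\theta))\,d\theta=\tau^\mu_A(p(0))\in\tau^\mu_A(K_0(A)).
\]
Combining the three steps, the assumed equality at $p_0$ yields equality at $p_0-1$; the ``in particular'' assertion follows by applying this reduction once with $p_0=p+1\in 2\NN$ for each odd $p$.

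The main obstacle is precisely the reverse inclusion $\tau^\mu(K_0(A\otimes C(\T)))\subseteq \tau^\mu_A(K_0(A))$: the preceding containment lemma required only the direction supplied by $i_*$, whereas here the homotopy-invariance argument is essential to rule out any enlargement of the trace image after tensoring with $C(\T)$, and this is what allows the equality version of Conjecture~\ref{mainconj-minimal} to descend across the even/odd divide in a way that the containment version of Conjecture~\ref{mainconj} needed no analogous input.
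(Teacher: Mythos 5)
Your argument is correct, and its skeleton is the same as the paper's: extend the strongly minimal $\Z^{p_0-1}$-system by a trivially acting generator, check that strong minimality survives (your case analysis of pairs $\langle T_i,T_j\rangle$ is exactly the point the paper makes for the odd-to-even passage), and observe that the block form of $\iota_*\Theta$ together with the triviality of $T_{p_0}$ identifies the right-hand side of Conjecture \ref{mainconj-minimal} at $p_0$ with the one at $p_0-1$. Where you go beyond the paper is in the transfer of \emph{equality}: the paper simply says to replace the containment in \eqref{eqn:contain} by an equality, which tacitly uses that adjoining the trivially acting, untwisted generator does not enlarge the range of the trace on $K_0$; you make this explicit via the decomposition $C(\Sigma)\rtimes_{\iota_*\sigma}\Z^{p_0}\cong A\otimes C(\TT)$ (the new unitary is central and untwisted) and the observation that for a loop of projections $\theta\mapsto p(\theta)$ the value $\tau^\mu_A(p(\theta))$ is constant, so $(\tau^\mu_A\otimes\int_{\TT})(K_0(A\otimes C(\TT)))=\tau^\mu_A(K_0(A))$. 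This is a genuine added justification rather than a different route: the paper's one-line proof presupposes exactly this reverse inclusion of trace ranges, and your homotopy/constancy-of-trace argument (or, equivalently, a K\"unneth/Pimsner--Voiculescu computation showing the extra $K_1(A)$ summand is traceless) is what makes the descent of equality across the even/odd divide airtight.
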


\begin{proof}
We apply the same proof as for the previous lemma but replace the containment in \eqref{eqn:contain} with equality
and notice that $p=p_0-1$ is odd and hence we consider a strongly minimal system in the odd case. This means that every generator $T_j$ for $j=1,\cdots, p_0-1$ acts minimally. But then adding a trivial action of $\Z$ as above, we get again a strongly minimal action for $p=p_0$ even. 
\end{proof}

We make some observations now to help investigate the conjectures. 
In comparison with the non-magnetic case, we expect the inclusion of the magnetic gap-labelling group into the magnetic frequency group to be the easy half of the conjecture. Recall that  $\Z^{J}$ denotes the subgroup of $\Z^p$ which corresponds to a given ordered subset $J=\{j_1 < \cdots < j_{\vert J\vert}\}$ of $\{1, \cdots, p\}$ where we put zero components for the entries corresponding to the complement set $J^c$. So the action of $\Z^J$ on $\Sigma$ is generated by the homeomorphisms $(T_j)_{j\in J}$. We denote for {{$\alpha=(\alpha_1, \cdots, \alpha_{\vert J\vert}) \in \Z^{\vert J\vert}$ by $T^\alpha$ the homeomorphism of $\Sigma$ which is given by 
$$
T^\alpha := \Pi_{1\leq \ell\leq \vert J\vert}\;\;  T_{j_\ell}^{\alpha_\ell} = T_{j_1}^{\alpha_1}\circ \cdots \circ T_{j_{\vert J\vert}}^{\alpha_{\vert J\vert}}.  
$$}}
Notice that the $\Z$-module $C(\Sigma, \Z)$ is generated by the characteristic functions of the clopen subsets of $\Sigma$, however an identification of $C(\Sigma, \Z)$ with a $\Z$-module ($\Z$-measures) constructed out of  the Boolean algebra $\maS$ of clopen subsets of $\Sigma$ is not helpful.  For $J=I^c$ with $I$ an ordered multi-index set of even length as in the statement of the conjecture, one expects to give a simple description of the class in $C(\Sigma, \Z)_{\Z^{I^c}}$ of any clopen set $\Lambda$ so that the $\Z^I$ invariance can be exploited.
So, in order to show for instance that the magnetic frequency group (RHS) is contained in the magnetic gap-labelling group, one needs to show that for any such $\Lambda$, the real number ${\rm Pf}(\Theta_I)\times \mu (\Lambda)$ belongs to the magnetic gap-labelling group (LHS).  

This inclusion will be 
given explicitly in Section \ref{3D1} when $\Lambda$ has a convenient presentation. We shall only give this construction in the $3D$ case where we succeeded to prove both our conjectures. Although the construction is already  combinatorially involved,  it turns out to be feasible by direct inspection. We expect that our proof will help to deduce an explicit construction of the easy half of the conjecture for {{general algebraic combinations}} of clopens satisfying the invariance property in the coinvariants, that is whose class belongs to $\left( C(\Sigma, \Z)_{\Z^{I^c}}\right)^{\Z^I}$.

  Another important observation is that the $K$-theory group of the twisted crossed product algebra is isomorphic to that of the untwisted one.  More precisely,  the following is probably  known to experts, but we give the short proof. Let $X= \Sigma \times_{\ZZ^p} \RR^p$ be the suspension of $\Sigma$, that is the quotient of the cartesian product $\Sigma\times \R^p$ under the diagonal action of $\Z^p$. The additive group $\R^p$ acts on $X$ and this action yields a lamination of $X$ and the action groupoid $X\rtimes \R^p$.  Recall that the groupoid $X\rtimes \R^p$ is strongly Morita equivalent to the groupoid $\Sigma\rtimes \Z^p$, see \cite{Rieffel2, Green}, see also \cite{KP}.

  \begin{theorem}[Twisted Connes-Thom isomorphism]\label{twistedCT}
  Let $\Sigma$ be a Cantor set with an action of $\ZZ^p$ and let $X= \Sigma \times_{\ZZ^p} \RR^p$.
Let $\sigma$ be the multiplier on $\ZZ^p$ associated to a skew-symmetric $(p\times p)$ matrix $\Theta$.
 Then
 $$
K^p(X)  \cong  K_0(C(\Sigma)\rtimes_\sigma\Z^p).
 $$
 \end{theorem}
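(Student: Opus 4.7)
The plan is to combine the groupoid Morita equivalence recalled in the paragraph preceding the theorem with a twisted version of the Connes--Thom isomorphism. Concretely, I would extend $\sigma$ to a multiplier $\widetilde\sigma$ on $\R^p$ by the same bilinear recipe $\widetilde\sigma(s,t)=\exp(-\pi i\, s^t\Theta t)$ (noting that $\psi_\gamma(\gamma')$ is simply the restriction to $\Z^p\times\Z^p$ of a bilinear pairing on $\R^p\times\R^p$), and consider the twisted $\R^p$-action on $C_0(X)$ coming from translation in the $\R^p$-factor of $\Sigma\times\R^p$.

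Step 1 (Morita reduction to $\R^p$). The strong Morita equivalence between the \'etale groupoids $\Sigma\rtimes\Z^p$ and $X\rtimes\R^p$ recalled above is tautologically equivariant with respect to the pair of $2$-cocycles $(\sigma,\widetilde\sigma)$, because both cocycles are pulled back from the same bilinear form on $\R^p\times\R^p$. Applying the Muhly--Renault--Williams Morita equivalence theorem for twisted groupoid $C^*$-algebras (equivalently, a direct Packer--Raeburn argument implementing the linking algebra as a twisted crossed product on a transversal) yields
$$C(\Sigma)\rtimes_\sigma\Z^p\;\sim_M\;C_0(X)\rtimes_{\widetilde\sigma}\R^p,$$
and hence an isomorphism of their $K_0$-groups.

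Step 2 (Twisted Connes--Thom). I would then apply the twisted Connes--Thom isomorphism for $\R^p$: for any $C^*$-dynamical system $(A,\alpha,\R^p)$ and any $2$-cocycle $\widetilde\sigma$, one has natural isomorphisms $K_i(A\rtimes_{\alpha,\widetilde\sigma}\R^p)\cong K_{i+p}(A)$, periodic in $i+p$ mod $2$. This is the classical Connes theorem in the untwisted case and extends to twisted $\R^p$-actions either by the Elliott--Natsume--Nest argument or, more directly, by interpolating $\widetilde\sigma$ to the trivial cocycle through $\widetilde\sigma_t(s,t')=\exp(-\pi i\, t\, s^t\Theta t')$, $t\in[0,1]$, which produces a continuous field of $C^*$-algebras whose fibres are known to have constant K-theory by Rieffel's deformation-quantisation results, so that untwisted Connes--Thom at $t=0$ computes the K-theory at $t=1$. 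Taking $i=0$ and $A=C_0(X)$ gives $K_0(C_0(X)\rtimes_{\widetilde\sigma}\R^p)\cong K_p(C_0(X))=K^p(X)$. Chaining with the isomorphism of Step 1 produces the desired identification $K^p(X)\cong K_0(C(\Sigma)\rtimes_\sigma\Z^p)$.

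The only substantive point, and thus the expected main obstacle, is the twisted Connes--Thom step: its untwisted version and the Morita reduction are standard, but one must verify that the bilinear cocycle $\widetilde\sigma$ on $\R^p$ does not alter the K-theoretic degree shift. The deformation argument above handles this cleanly and simultaneously explains why only the parity of $p$ (encoded in $K^p(X)$) matters on the topological side.
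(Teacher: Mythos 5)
Your proposal is correct, but it arranges the ingredients differently from the paper, and the difference is worth noting. The paper removes the twist \emph{first}, at the discrete level: it invokes the homotopy of multipliers $\sigma_t$ associated to $t\Theta$ and Packer--Raeburn's homotopy-invariance theorem (Theorem 4.2 of \cite{PR2}) to get $K_0(C(\Sigma)\rtimes_\sigma\Z^p)\cong K_0(C(\Sigma)\rtimes\Z^p)$, and then only needs the \emph{untwisted} Morita equivalence of the groupoids $X\rtimes\R^p$ and $\Sigma\rtimes\Z^p$ together with the classical Connes--Thom isomorphism to identify the latter group with $K^p(X)$. You instead carry the twist through both steps: a Morita equivalence between $C(\Sigma)\rtimes_\sigma\Z^p$ and $C(X)\rtimes_{\widetilde\sigma}\R^p$ (which requires the equivalence theorem for groupoid $C^*$-algebras with $2$-cocycles, not just Green--Rieffel; your observation that both cocycles are pulled back from the same bilinear form on $\R^p$ is exactly what makes the cocycles match on the transversal reduction), followed by a twisted Connes--Thom theorem, which you justify by deforming $\widetilde\sigma$ to the trivial cocycle. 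Both routes are valid. The paper's ordering is more economical, since every tool it uses is available off the shelf in untwisted form plus one citation to \cite{PR2}; your route needs two twisted-upgrade lemmas, and for the second the cleanest justification is arguably the Packer--Raeburn stabilization trick ($A\rtimes_{\alpha,\widetilde\sigma}\R^p\otimes\cK$ is an ordinary $\R^p$-crossed product of $A\otimes\cK$, so untwisted Connes--Thom applies directly) rather than the Rieffel-deformation continuous-field argument, which requires an extra identification of twisted crossed products with deformations. On the other hand, your approach delivers a stronger intermediate statement --- the Morita equivalence between the twisted algebras $C(\Sigma)\rtimes_\sigma\Z^p$ and $C(X)\rtimes_{\widetilde\sigma}\R^p$ --- which is precisely the kind of identification the paper needs again later (in the 3D argument, where the trace is transported through the Morita isomorphism between the twisted foliation algebra and the twisted discrete crossed product), so it is the more structural, if heavier, proof.
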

 \begin{proof}
By the strong Morita equivalence of the groupoids $X\rtimes \R^p$  and $\Sigma\rtimes \Z^p$,
we see that the crossed product $C^*$-algebra $C(X)\rtimes\RR^p$,  is strongly Morita equivalent 
to the crossed product $C^*$-algebra $C(\Sigma)\rtimes\Z^p$.
In particular, using Connes-Thom isomorphism \cite{Connes81,FackSkandalis} , one has
\beq\label{eqn:Kiso2}
K^p(X)  \cong K_0(C(X)\rtimes\RR^p) \cong K_0(C(\Sigma)\rtimes \Z^p)
 \eeq
 For $t\in [0,1]$, let $\sigma_t$ denote the multiplier corresponding to the $(p\times p)$ skew symmetric
matrix $t\Theta$. More precisely, 
$$\sigma_t(\gamma, \gamma') = \exp\left(2\pi \sqrt{-1} t\sum_{j<k} \Theta_{jk} \gamma_j \gamma'_k\right), \qquad \text{where}\quad \gamma, \gamma'  \in \ZZ^{p}.$$
Now $\{C(\Sigma) \rtimes_{\sigma_t} \ZZ^p: t\in [0,1]\}$ is a homotopy of twisted crossed products in the
sense of section 4, \cite{PR2}, where $\sigma_0=1$ and $\sigma_1=\sigma$. By Theorem 4.2 in \cite{PR2}, we deduce that 
 \beq\label{eqn:Kiso3}
 K_0(C(\Sigma)\rtimes \Z^p) \cong K_0(C(\Sigma)\rtimes_\sigma \Z^p).
 \eeq
 The Theorem follows from \eqref{eqn:Kiso2} and \eqref{eqn:Kiso3}.
 \end{proof}

\subsection{The measured twisted foliated index theorem}\label{FoliatedIndex}

Here we sketch a proof of a special case of the measured twisted index theorem that we need in this paper.
It is a twisted analogue of a special case of the index theorem in Benameur-Piazza \cite{BenameurPiazza}.
The general case will be treated in \cite{BenameurMathaiPreprint}. 

Let $\rho: \Z^p \longrightarrow {\rm Homeo}(\Sigma)$ denote the minimal action of $\Z^p$ on $\Sigma$. 
We suppose that $\mu$ is an invariant measure on $\Sigma$ and that $p$ is even. 
Then the suspension $X= \RR^p \times_{\Z^p}\Sigma$ is a compact foliated space with transversal the Cantor set $\Sigma$,
and with invariant transverse measure induced from $\mu$ (cf. Proposition 2.2 \cite{KP}).  The monodromy
groupoid is:
\beq
\cG = (\RR^p \times \RR^p \times \Sigma)/\ZZ^p.
\eeq
Then the twisted monodromy groupoid $C^*$-algebra, $C^*(X, \cF, \sigma)$ consists of the operator norm closure of 
continuous functions $k$ satisfying the following conditions:
\begin{enumerate}
\item $k \in C(\RR^p \times \RR^p \times \Sigma)$;
\item $k(x.\gamma, y.\gamma, \vartheta.\gamma) = e^{i\varphi_\gamma(x)} k(x,y,\vartheta)  e^{-i\varphi_\gamma(y)} $ for all $x,y\in \RR^p, \, \vartheta\in\Sigma, \, \gamma\in\ZZ^p$;
\end{enumerate}
Define the transverse trace as,
\beq
\tau_\mu(k) = \int_X k(x,x,\vartheta)  d\mu(\vartheta) dx.
\eeq
It easily extends to matrix valued kernel functions, by composing with the pointwise matrix trace.
Next define the continuous functions $\varphi_\gamma(x, \gamma)$.
Let $\Theta(\vartheta)$ be a skew-symmetric $p\times p$ matrix that is a continuous function of $\vartheta \in \Sigma$ 
and such that $\Theta(\vartheta.\gamma) = \Theta(\vartheta)$ for all $\vartheta \in \Sigma, \, \gamma\in \ZZ^p$.
Since the action of $\ZZ^p$ on $\Sigma$ is assumed to be minimal, it follows that $\Theta=\Theta(\vartheta)$ is independent of $\vartheta  \in \Sigma$.

Set $B = \frac{1}{2} dx^t\Theta dx,$ which is a closed 2-form on $\RR^p \times \Sigma$ (which is independent of $\vartheta  \in \Sigma$) 
satisfying $\gamma^*B=B$ for all $\gamma \in \ZZ^p$.
Since $B=d\eta$ where for instance $\eta = \sum_{j<k} \Theta_{jk} x_j dx_k$, we get $0=d(\gamma^*\eta - \eta)$. Since $\RR^p$ is simply-connected,
we see that $\gamma^*\eta - \eta= d\phi_\gamma$, where $\phi_\gamma$ is a smooth function on $\RR^p \times \Sigma$ (which is independent of $\vartheta \in \Sigma$). We normalise it 
so that $\phi_\gamma(0)=0$ for all $\gamma\in\ZZ^p$.

Consider functions $f \in L^2(\RR^p \times \Sigma; dxd\mu)$ and bounded operators on it defined as follows,
\begin{enumerate}
\item $S_\gamma f(x,\vartheta) = e^{i\varphi_\gamma(x)} f(x, \vartheta)$;
\item $U_\gamma f(x,\vartheta) = f(x.\gamma, \vartheta.\gamma)$.
\end{enumerate}
Then for all $\gamma \in \ZZ^p$, the bounded operators $T_\gamma=U_\gamma \circ S_\gamma$ satisfy the relation
\beq
T_{\gamma_1} T_{\gamma_1} = \sigma(\gamma_1,\gamma_2)\, T_{\gamma_1\gamma_2}
\eeq
where $ \sigma(\gamma_1,\gamma_2)= \phi_{\gamma_1}(\gamma_2)$ is a multiplier on $\ZZ^p$.

Let $\dirac$ denote the Dirac operator on $\RR^p$ and $\nabla = d+i\eta$ the connection on the trivial line
bundle on $\RR^p$, $\nabla^E$ the lift to $\RR^p \times \Sigma$ of a connection on a vector bundle $E\to X$. 
Consider the twisted Dirac operator along the leaves of the lifted foliation,
\beq
D= \dirac\otimes\nabla\otimes\nabla^E : L^2(\RR^p \times \Sigma, \cS^+\otimes E)\longrightarrow L^2(\RR^p \times \Sigma, \cS^-\otimes E).
\eeq
Then one computes that $T_\gamma \circ D= D\circ T_\gamma$ for all $\gamma\in\ZZ^p$.
The heat kernel $k(t,x,y,\vartheta)$ of $D$, although  not compactly supported, 
can be shown as usual to belong to the $C^*$-algebra of the foliation. More precisely, $k(t,x,y,\vartheta) \in C^*(X, \cF, \sigma)\otimes \cK.$ 
For $t>0$, define the idempotent 
$$e_t(D)\in  
M_2(C^*(X, \cF, \sigma)\otimes{\mathcal K})$$ as follows:
$$
e_t(D) = \begin{pmatrix}
e^{-tD^-D^+} & \displaystyle
e^{-{\frac{t}{2}}D^-D^+}\frac{(1-e^{-tD^-D^+})}
  {D^-D^+} D^+ \\[+11pt]
e^{-{\frac{t}{2}}D^+D^-}{D^+} &
        1- e^{-tD^+D^-}
\end{pmatrix},
$$
It is the analogue of the Wassermann idempotent, see e.g. \cite{ConnesMoscovici}. Then the $C^*(X, \cF, \sigma)$-{\em twisted analytic index}
is defined as
\begin{equation}\label{bc}
{\rm Index}_{C^*(X, \cF, \sigma)}(D^+)   = 
[e_t(D)] - [E_0]  \in K_0(C^*(X, \cF, \sigma)),
\end{equation}
where $t>0$ and $E_0$ is the idempotent
$$
E_0 = \begin{pmatrix} 0 & 0 \\ 0 &
        1
\end{pmatrix}.
$$
Finally, using the fact that $C^*(X, \cF, \sigma)$ and $C(X)\rtimes_\sigma \RR^p$ are isomorphic,
this constructs  a {\em twisted foliated index map}, {{generalizing}} Section 2, \cite{MarcolliMathai}
and also \cite{Connes82}, \cite{MooreSchochet}
$$
{\rm Index}_{C^*(X, \cF, \sigma)}: K^0(X) \longrightarrow K_0(C(X)\rtimes_\sigma \RR^p) \text{ as } [E] \longmapsto {\rm Index}_{C^*(X, \cF, \sigma)}(D^+).
$$
The twisted measured index of $D^+$ is then by definition the $\tau^\mu$-trace of the index class, a real number.

\begin{theorem}
Under the previous assumptions, the measured index of $D^+$ is given by the formula
$$
 \tau_\mu ({\rm Index} (D^+))  =    \sum_I    {\rm Pf}(\Theta_I) \int_{X} dx_I\wedge  \ch (F_E) d\mu(\vartheta).
$$
Here $I$ runs over subsets of $\{1,\ldots,p\}$ with an even number of elements, $dx_I$ is the differential form of degree equal to $|I|$
on the torus $\TT^p$ which is lifted to $X= \Sigma \times_{\ZZ^p} \RR^p$, and $\Theta_I$ is the skew-symmetric submatrix of 
$\Theta=(\Theta_{ij})$ with $i,j \in I$. Finally, $F_E$ denotes the curvature of the connection $\nabla^E$ on the vector bundle $E$ over $X$.
\end{theorem}

When $p$ is odd, a similar construction gives the odd measured index formula. 

\begin{proof}
A standard McKean-Singer type argument shows that the supertrace 
\begin{equation}\label{eqn:supertrace}
\tau_\mu({\rm tr}_s (k(t,\cdots))) = \tau_\mu(e^{-tD^-D^+}) - \tau_\mu(e^{-tD^+D^-}) = \tau_\mu ({\rm Index}_{C^*(X, \cF, \sigma)}(D^+))
\end{equation}
is independent of $t>0$ and  
represents the measured twisted foliated index. To be self-contained, we outline the argument. First we show that 
equation \eqref{eqn:supertrace} is independent of $t>0$. The heat operator $e^{-tD^2}$ can be differentiated with respect to $t$, since
 $\frac{d}{dt} e^{-tD^2}$ is a smoothing operator equal to $-D^2  e^{-tD^2}$, and therefore 
 $$
 \frac{d}{dt} \tau_\mu^s(e^{-tD^2}) = -  \tau_\mu^s(D^2  e^{-tD^2}) = -\frac{1}{2}\tau_\mu^s([D, D  e^{-tD^2}])=0
 $$
where the last equality holds since $D$ is an odd operator. Here $ \tau_\mu^s$ denotes the graded version of the trace $ \tau_\mu.$ This shows that 
$\tau_\mu^s(e^{-tD^2})$ is independent of $t>0$.

To complete the proof, we need to show that the smoothing kernel $k(t,\cdots)$ of $e^{-tD^2}$ converges to the smoothing kernel
of the projection $P$ to the nullspace of $D$, uniformly on compact subsets as $t\to \infty$. But this is identical to the argument given in
the proof of Proposition 15.11 in \cite{Roe98}.

Now, using the expression of the trace as an integral over the fundamental domain $\Sigma\times (0, 1)^p$ with respect to the product measure $\mu\otimes dvol$ on $\Sigma\times \R^p$ and applying the standard local index method, see \cite{BGV},  we obtain
\begin{align}
\lim_{t\downarrow 0} \tau_\mu({\rm tr}_s (k(t,\cdots))) &= \frac{1}{(2\pi)^p}  \int_{\Sigma\times (0, 1)^p}  \exp\left(\frac{1}{2} dx^t\Theta dx\right) \wedge \ch (F_E) d\mu(\vartheta),\\
&= \frac{1}{(2\pi)^p}   \sum_I    {\rm Pf}(\Theta_I) \int_{X} dx_I\wedge  \ch (F_E) d\mu(\vartheta).
\end{align}
Here $I$ runs over subsets of $\{1,\ldots,p\}$ with an even number of elements, and $\Theta_I$ denotes the skew-symmetric submatrix of 
$\Theta=(\Theta_{ij})$ with $i,j \in I$. Observe that $\frac{i}{2} dx^t\Theta dx$ is the curvature of the connection $\nabla$, and that 
$ \exp\left(\frac{1}{2} dx^t\Theta dx\right)$ is the Chern character of $\nabla$.\\
\end{proof}


\section{Magnetic gap-labelling group for periodic potentials}\label{Atheta}


Let $\Lambda[dx]=\Lambda[dx_1, \ldots, dx_p]$ denote the exterior algebra with generators $dx_1, \ldots, dx_p$. 
It has basis the monomials $dx_I= dx_{i_1}, \ldots, dx_{i_p}, \,\, I=\{i_1,\ldots,i_p\},\,\, i_1<\cdots<i_p$.
Given a skew-symmetric matrix $\Theta$, we can associate a quadratic element $\frac{1}{2}dx^t \Theta dx$ in $\Lambda[dx]$.
Here $dx$ is the column vector with entries $dx_j$ and $dx^t$ is the row vector with the same entries. Then the Gaussian 
$e^{\frac{1}{2}dx^t \Theta dx} $ can be expressed in terms of the Pfaffians, see section 1 in the paper of Mathai-Quillen \cite{MQ86},
\beq\label{exp}
e^{\frac{1}{2}dx^t \Theta dx} = \sum_I {\rm Pf}(\Theta_I) dx_I
\eeq
where $I$ runs over subsets of $\{1,\ldots,p\}$ with an even number of elements, and $\Theta_I$ denotes the submatrix of 
$\Theta=(\Theta_{ij})$ with $i,j \in I$, which is clearly also skew-symmetric. 

Let $\tau: A_\Theta\to \CC$ denote the von Neumann
trace. Then the magnetic gap-labelling group for magnetic Schr\"odinger operators $H_{\eta, V}$ where $V$ is periodic
is given by Proposition \ref{prop:periodic}, which is originally due to Elliott in \cite{Elliott}, although in that paper, only the Pfaffian ${\rm Pf}(\Theta)$
was {{recognized, whereas the other terms were only given in terms of the coefficients of $\Theta$, but were not recognized}} as the 
the Pfaffians of submatrices of $\Theta$ as neatly given over here, and was originally due to \cite{MQ86} in another context. Moreover our proof is different, being based on index theory and algebraic topology.

\begin{proposition}[Magnetic gap-labelling group for periodic potentials]\label{prop:periodic}
The range of the trace on the K-theory of $A_\Theta$ is:
\begin{enumerate}
\item If $p$ is even, then
$$
\tau (K_0(A_\Theta) )= \Z +\sum_{0<|I|<p} {\rm Pf}(\Theta_I) \Z  + {\rm Pf}(\Theta)\Z,
$$
\item If $p$ is odd, then
$$
\tau (K_0(A_\Theta) )= \Z+\sum_{0<|I|<p} {\rm Pf}(\Theta_I) \Z,
$$
\end{enumerate}
where $I$ runs over subsets of $\{1,\ldots,p\}$ with an even number of elements, and $\Theta_I$ denotes the submatrix of 
$\Theta=(\Theta_{ij})$ with $i,j \in I$.
\end{proposition}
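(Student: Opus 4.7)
The plan is to reduce the periodic case to the measured twisted foliated index theorem of Section \ref{FoliatedIndex} applied to the degenerate transversal $\Sigma = \{*\}$. In this case, $X = \RR^p \times_{\ZZ^p} \{*\} = \TT^p$, the trace $\tau_\mu$ reduces to the canonical faithful tracial state $\tau$ on $A_\Theta$, and Theorem \ref{twistedCT} specialises to an isomorphism $K_0(A_\Theta) \cong K^p(\TT^p)$, where the upper index is understood mod $2$ (so $K^0(\TT^p)$ when $p$ is even and $K^1(\TT^p)$ when $p$ is odd). Via the Baum-Connes assembly map every class in $K_0(A_\Theta)$ arises as the twisted analytic index of a Dirac operator twisted by some vector bundle $E \to \TT^p$.

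Next I would apply the measured twisted foliated index formula to this setup. Since $\Sigma$ is a point, integration over $\Sigma \times (0,1)^p$ collapses to integration over $\TT^p$, and using the Pfaffian expansion \eqref{exp} the formula becomes
\[
\tau\bigl(\mathrm{Ind}(D_E^+)\bigr) \;=\; \int_{\TT^p} e^{\frac12 dx^t \Theta dx} \wedge \ch(E) \;=\; \sum_{I} {\rm Pf}(\Theta_I) \int_{\TT^p} dx_I \wedge \ch(E),
\]
where the sum is over ordered $I \subset \{1,\dots,p\}$ of even cardinality. This immediately yields the containment of $\tau(K_0(A_\Theta))$ in the group stated in the Proposition, with ${\rm Pf}(\Theta_\emptyset)=1$ accounting for the $\ZZ$ summand coming from the rank-related contribution at $I = \emptyset$.

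For the reverse containment I would argue by surjectivity of the Chern character. The integral $\int_{\TT^p} dx_I \wedge \ch(E)$ only sees the degree $(p-|I|)$ component of $\ch(E)$; since $|I|$ is even, this degree has the parity of $p$ and therefore lies in the image of the Chern character on $K^p(\TT^p)$. Because $H^*(\TT^p;\ZZ)$ is a free exterior algebra on the integer classes $[dx_1], \dots, [dx_p]$ — each realised as $c_1$ of the line bundle pulled back from the corresponding factor circle — the Chern character surjects (with integer coefficients) onto $H^{\mathrm{even}}(\TT^p;\ZZ)$ for $p$ even and onto $H^{\mathrm{odd}}(\TT^p;\ZZ)$ for $p$ odd. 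Consequently, for each admissible $I$, the integer $\int_{\TT^p} dx_I \wedge \ch(E)$ runs over all of $\ZZ$ as $E$ varies, so each $\ZZ$-multiple of ${\rm Pf}(\Theta_I)$ is independently realised. Taking $I = \{1,\dots,p\}$ (only possible when $p$ is even) gives the top Pfaffian term ${\rm Pf}(\Theta)\,\mathrm{rank}(E) \in {\rm Pf}(\Theta)\ZZ$, while for $p$ odd this term disappears since $|I|$ must be even. Combining all cases gives the two formulas.

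The main obstacle is the integrality/surjectivity argument of the third paragraph — making sure that every coefficient $n_I \in \ZZ$ of ${\rm Pf}(\Theta_I)$ in the target group is simultaneously achievable by a single K-theory class. For $\TT^p$ this is elementary because $H^*(\TT^p;\ZZ)$ is torsion-free and generated by classes of line bundles, so the standard line-bundle/Bott generator construction delivers every integer combination. This is precisely the contrast alluded to in the introduction: in the Cantor-set setting the corresponding integrality of all components of the Chern character is the deep issue underlying the full magnetic gap-labelling conjecture, whereas in the purely periodic $A_\Theta$ case it is automatic.
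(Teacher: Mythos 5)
Your proposal is correct and follows essentially the same route as the paper: the isomorphism $K^p(\TT^p)\cong K_0(A_\Theta)$ (the paper invokes the twisted Baum--Connes isomorphism $\mu_\Theta$, which amounts to the same machinery as your Connes--Thom/point-transversal reduction), the index-theoretic formula $\tau=\int_{\TT^p}e^{\frac12 dx^t\Theta dx}\wedge\Ch(\cdot)$ expanded via the Pfaffian identity \eqref{exp}, and the integrality plus surjectivity of the Chern character on $\TT^p$ to identify the full range. The only cosmetic difference is that the paper cites the twisted $L^2$-index theorem of \cite{Mathai99} directly rather than specialising the measured twisted foliated index theorem to a one-point transversal.
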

\begin{proof}
Since the Baum-Connes conjecture with coefficients is true for $\Z^p$, it follows that the twisted Baum-Connes conjecture  is true for $\Z^p$.
$$
\mu_\Theta : K^p(\TT^p) \stackrel{\sim}{\longrightarrow} K_0(A_\Theta) 
$$
is an isomorphism. 
Then by Appendix A  and the twisted L$^2$-index theorem \cite{Mathai99} and equation \eqref{exp}, one has 
\begin{align*}
\tau(\mu_\Theta(\xi)) &= \int_{\TT^p} e^{\frac{1}{2}dx^t\Theta dx} \wedge \Ch(\xi) \\
&=  \sum_I    {\rm Pf}(\Theta_I) \int_{\TT^p} dx_I\wedge \Ch(\xi)_{I^c} 
\end{align*}
where $I^c$ is the index that is complement to $I$ and $\Ch(\xi)_{I^c} $ denotes the component of the Chern character $\Ch(\xi)$
of the vector bundle $\xi$
containing $dx_{I^c}$.
Since the Chern character is integral on the torus $\TT^p$, the result follows by varying $\xi$ over all K-theory classes.

\end{proof}

\begin{remark}
In fact, Elliott in \cite{Elliott}, proved more in that he computed the whole Connes-Chern character in terms of the 
canonical action of the $p$-dimensional torus. 
The special case when $p=2$ was partially due to Rieffel, \cite{Rieffel}
and partially to Pimsner-Voiculescu \cite{PV,PV2}.\\
\end{remark}


\section{Computation of the 2D magnetic gap-labelling group}\label{sect:2D}
 
 
 We now compute the magnetic gap-labelling group in the easiest case of $p=2$. 
Let again $\ZZ^2\curvearrowright \Sigma$ be a minimal action with invariant
probability  measure $\mu$. Let $\sigma$ be a multiplier on $\ZZ^2$. Then the group cohomology class of $[\sigma]
 \in H^2(\ZZ^2; \RR/\ZZ) \cong \RR/\ZZ$ can be identified with a real number $\theta, \, 0\le \theta<1$. More precisely,
 we take $\sigma = e^{2\pi i \theta\omega}$ where $\omega$ is the standard symplectic form on $\ZZ^2$.
 
 Notice that an easy inspection shows that the natural inclusion
                $A_\theta =C^{*}_{r}(\Z^2, \sigma)\stackrel{\iota}{\hookrightarrow} C(\Sigma)\rtimes_\sigma\Z^2\,$, takes the 
Rieffel projection $\cP_\theta$ \cite{Rieffel} to the projection $\iota(\cP_\theta)$ in $C(\Sigma)\rtimes_\sigma\Z^2\,$ which generates a $\Z$
factor in $K_0(C(\Sigma)\rtimes_\sigma\Z^2)$. Indeed let $\mu:C(\Sigma)\to \CC$ and $\tau^\mu:C(\Sigma)\rtimes_\sigma\Z^2\to \CC$ be the traces
induced by $\mu$, see equation \eqref{trace}, and by the same notation the maps induced on K-theory. Then we have,

\begin{lemma}\label{2Dtrace}
Let $\tau: A_\theta \to \CC$ denote the von Neumann trace. Then there is a  commutative diagram,
$$
\xymatrix{
A_\theta\,  \ar@{^{(}->}[r]^{\iota\qquad} \ar[d]^{\tau} & C(\Sigma)\rtimes_{\sigma} \ZZ^2 \ar[d]^{\tau_\mu} \\
\CC \ar[r]^{=}&\CC }   
$$
In particular, we see that $\theta = \tau(\cP_\theta) = \tau_\mu(\iota(\cP_\theta))$.
\end{lemma}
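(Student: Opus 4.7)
The plan is to verify the commutativity of the diagram on a dense subalgebra and then extend by continuity. First, I would identify the inclusion $\iota$ concretely. The algebra $A_\theta = C^*_r(\ZZ^2,\sigma)$ is the reduced twisted $C^*$-completion of $C_c(\ZZ^2)$ equipped with $\sigma$-twisted convolution, while $C(\Sigma)\rtimes_\sigma\ZZ^2$ is the operator norm completion of $C_c(\ZZ^2\times\Sigma)$ acting on $L^2(\Sigma,d\mu)\otimes \ell^2(\ZZ^2)$ as recalled before \eqref{trace}. The map $\iota$ extends the unital inclusion $\CC\hookrightarrow C(\Sigma)$ at the level of twisted convolution algebras; explicitly, it sends $f\in C_c(\ZZ^2)$ to $\tilde f\in C_c(\ZZ^2\times\Sigma)$ defined by $\tilde f(\gamma,z):=f(\gamma)$, i.e.\ functions that are constant in the transverse variable $z\in\Sigma$.

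Next, I would evaluate both traces on such an element $\tilde f$. Applying the defining formula \eqref{trace} gives
$$
\tau^\mu(\iota(f)) \;=\; \int_\Sigma \tilde f(0,z)\, d\mu(z) \;=\; f(0)\cdot \mu(\Sigma) \;=\; f(0).
$$
On the other hand, the canonical faithful normal tracial state on $A_\theta$ is given on the dense subalgebra $C_c(\ZZ^2)$ by $\tau(f)=f(0)$. Hence $\tau^\mu\circ\iota$ and $\tau$ agree on the dense $*$-subalgebra $C_c(\ZZ^2)\subset A_\theta$. Since both $\tau$ and $\tau^\mu\circ\iota$ are norm-continuous on their respective $C^*$-algebras, the equality extends to all of $A_\theta$, yielding the commutativity of the diagram.

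For the second assertion, I would invoke Rieffel's classical computation \cite{Rieffel} that the canonical trace of his projection $\cP_\theta\in A_\theta$ satisfies $\tau(\cP_\theta)=\theta$. By the commutativity just established, $\tau^\mu(\iota(\cP_\theta))=\tau(\cP_\theta)=\theta$, which in turn shows that $[\iota(\cP_\theta)]$ has non-torsion class in $K_0(C(\Sigma)\rtimes_\sigma\ZZ^2)$ and so generates a $\ZZ$ summand whenever $\theta$ is irrational.

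The only mild obstacle is the well-definedness of $\iota$ at the level of reduced $C^*$-completions, i.e.\ that $\iota$ extends to an isometric $*$-homomorphism. This follows because the left regular representation of $C(\Sigma)\rtimes_\sigma\ZZ^2$ on $L^2(\Sigma,d\mu)\otimes\ell^2(\ZZ^2)$, restricted to those elements supported on constant functions in the $C(\Sigma)$-direction, is unitarily equivalent to the representation of $A_\theta$ obtained by tensoring the left $\sigma$-regular representation on $\ell^2(\ZZ^2)$ with the identity on $L^2(\Sigma,d\mu)$; since this ampliation is faithful, the resulting completion of $C_c(\ZZ^2)$ coincides with $A_\theta$, proving that $\iota$ is an isometric embedding and closing the argument.
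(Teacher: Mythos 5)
Your argument is correct and follows essentially the same route as the paper: evaluate $\tau$ and $\tau^\mu\circ\iota$ on the dense subalgebra of finitely supported functions using the defining formula \eqref{trace}, note that both give $f(0)$ because $\mu$ is a probability measure, and conclude via Rieffel's computation $\tau(\cP_\theta)=\theta$. Your additional remarks on the well-definedness of $\iota$ and extension by norm-continuity simply make explicit what the paper leaves implicit.
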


\begin{proof}

Let $f(\gamma) \in A_\theta$. Then by equation \eqref{trace}, we see that 
$$
\tau(f) = f(0), \quad \tau^\mu(\iota(f))=  \tau^\mu(f)= \int_\Sigma f(0) \, d\mu(z) = f(0),
$$
since $\mu$ is a probability measure on $\Sigma$.
\end{proof}

 \begin{theorem}\label{2DKthy}
 $$
 K_0(C(\Sigma)\rtimes_\sigma\Z^2)\simeq C(\Sigma, \ZZ)_{\ZZ^2} \oplus \Z[\iota(\cP_\theta)].
 $$
 where $C(\Sigma, \ZZ)_{\ZZ^2}$ denotes the space of coinvariants.
 \end{theorem}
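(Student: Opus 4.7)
The plan is to first reduce to the untwisted case via Theorem \ref{twistedCT}, then iterate the Pimsner--Voiculescu six-term exact sequence to compute $K_0(C(\Sigma)\rtimes\ZZ^2)$ as an abelian group, and finally identify the extra $\ZZ$-summand with $\ZZ[\iota(\cP_\theta)]$ by means of Lemma \ref{2Dtrace} and a homotopy of multipliers.

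First, Theorem \ref{twistedCT} (applied via its proof using the homotopy $\{\sigma_t\}_{t\in[0,1]}$) yields $K_0(C(\Sigma)\rtimes_\sigma\ZZ^2) \cong K_0(C(\Sigma)\rtimes\ZZ^2)$, so it suffices to treat the untwisted crossed product. Write $B=C(\Sigma)\rtimes_{T_1}\ZZ$. Since $K_0(C(\Sigma))=C(\Sigma,\ZZ)$ and $K_1(C(\Sigma))=0$, the Pimsner--Voiculescu sequence for the $T_1$-action immediately gives
$$K_0(B) = C(\Sigma,\ZZ)_{\ZZ},\qquad K_1(B) = C(\Sigma,\ZZ)^{\ZZ}.$$
Now apply Pimsner--Voiculescu a second time to the induced action of $T_2$ on $B$; since $T_2$ commutes with $T_1$, this action is just the descent of $T_2^*$ to coinvariants and its restriction to invariants. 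The six-term sequence collapses into a short exact sequence
$$0\longrightarrow C(\Sigma,\ZZ)_{\ZZ^2} \longrightarrow K_0(C(\Sigma)\rtimes\ZZ^2) \longrightarrow C(\Sigma,\ZZ)^{\ZZ^2}\longrightarrow 0.$$
Minimality of the $\ZZ^2$-action forces every invariant integer-valued continuous function on $\Sigma$ to be constant (its level sets are $\ZZ^2$-invariant clopens), so $C(\Sigma,\ZZ)^{\ZZ^2}=\ZZ$. This $\ZZ$ is free, so the sequence splits and we obtain $K_0(C(\Sigma)\rtimes\ZZ^2)\cong C(\Sigma,\ZZ)_{\ZZ^2}\oplus\ZZ$.

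It remains to identify the $\ZZ$-summand with the subgroup generated by $[\iota(\cP_\theta)]$ in the twisted setting. Here I would transport the splitting along the homotopy $\sigma_t$ from Theorem \ref{twistedCT}: the Rieffel projections $\cP_{\sigma_t}\in A_{t\theta}$ vary continuously in $t$, and at $t=0$ the class $[\cP_{\sigma_0}]$ is a Bott-type generator of the $\ZZ$-summand in $K_0(C(\TT^2))$ whose image in $K_0(C(\Sigma)\rtimes\ZZ^2)$ is a generator of the $\ZZ$-factor (it has nontrivial PV boundary into $C(\Sigma,\ZZ)^{\ZZ^2}=\ZZ$). Homotopy invariance of the splitting then forces $[\iota(\cP_\theta)]$ to be a generator of the $\ZZ$-summand in the twisted crossed product. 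As a sanity check, Lemma \ref{2Dtrace} gives $\tau_\mu([\iota(\cP_\theta)])=\theta$, while $\tau_\mu$ on the coinvariants summand takes values in $\ZZ[\mu]$; for $\theta$ generic with respect to $\ZZ[\mu]$ this already shows $[\iota(\cP_\theta)]$ lies outside the coinvariants summand and must contribute to the complementary $\ZZ$.

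I expect the main obstacle to be this last identification. The abstract iterated-PV computation yields only the isomorphism class of $K_0$ and does not single out a natural generator of the $\ZZ$-factor; matching that factor with the concrete projection $\iota(\cP_\theta)$ coming from $A_\theta$ requires either the above homotopy/deformation argument or a direct computation of the PV boundary of $[\iota(\cP_\theta)]$, both of which are delicate because $\cP_\theta$ is not constructed inside the PV exact sequences.
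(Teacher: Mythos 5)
Your additive computation is correct, and it takes a genuinely different route from the paper's proof: the paper simply quotes the untwisted computation $K_0(C(\Sigma)\rtimes\ZZ^2)\cong C(\Sigma,\ZZ)_{\ZZ^2}\oplus\ZZ$ from \cite{BCL} (Kasparov spectral sequence) and combines it with Theorem \ref{twistedCT}, whereas you rederive it by iterating the Pimsner--Voiculescu sequence. Your argument is essentially the one the paper gives in its appendix on the coinvariants as a summand in $K$-theory, and it has the advantage that it can be run \emph{directly} on the twisted algebra, by writing $C(\Sigma)\rtimes_\sigma\ZZ^2=(C(\Sigma)\rtimes_{T_1}\ZZ)\rtimes_\phi\ZZ$ with $\phi_*=T_{2*}$ on $K$-theory; doing so avoids having to know how the abstract isomorphism of Theorem \ref{twistedCT} interacts with the two summands. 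The use of minimality to get $C(\Sigma,\ZZ)^{\ZZ^2}=\ZZ$ and the resulting splitting are fine.

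The genuine gap is the identification of the complementary $\ZZ$ with $\ZZ[\iota(\cP_\theta)]$, which is part of the statement. The deformation you propose cannot work as written: $\tau(\cP_{t\theta})=t\theta$ while $C(\TT^2)$ has no nonzero projection of trace in $(0,1)$, so there is no continuous family of Rieffel projections reaching $t=0$ and no class $[\cP_{\sigma_0}]$ to serve as a Bott generator; and the trace check only shows $[\iota(\cP_\theta)]$ lies outside the coinvariants summand when $\theta\notin\ZZ[\mu]$, and even then does not show it generates a complement. The clean repair is the boundary computation you set aside, and naturality makes it painless: the unital $\ZZ^2$-equivariant inclusion $\CC\hookrightarrow C(\Sigma)$ induces $\iota: A_\theta\to C(\Sigma)\rtimes_\sigma\ZZ^2$ together with a map of Pimsner--Voiculescu sequences for the second $\ZZ$-action, so $\partial[\iota(\cP_\theta)]$ is the image of $\partial[\cP_\theta]$ computed entirely inside $A_\theta=C(\TT)\rtimes_{R_\theta}\ZZ$. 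There the classical computation \cite{PV,Rieffel} shows $\partial[\cP_\theta]$ is a generator of $K_1(C(\TT))\cong\ZZ$ (equivalently, $K_0(A_\theta)=\ZZ[1]\oplus\ZZ[\cP_\theta]$), and that generator, the class of the implementing unitary, maps to the constant function $1$, i.e.\ to a generator of $C(\Sigma,\ZZ)^{\ZZ^2}=\ZZ$. Hence $[\iota(\cP_\theta)]$ surjects onto the quotient and splits the sequence, which is exactly the missing step. (The paper itself only asserts this identification by an ``easy inspection'' before Lemma \ref{2Dtrace}, so your instinct that this is the delicate point is sound; it is resolved by naturality rather than by the $t$-deformation.)
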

 \begin{proof}
The computation of $K_0(C(\Sigma)\rtimes\Z^2)$ in the untwisted case was carried out 
 in \cite{BCL},
$$
        K_0(C(\Sigma)\rtimes\Z^2)\simeq C(\Sigma, \ZZ)_{\ZZ^2} \oplus \Z.
$$
By Theorem \ref{twistedCT} above, we have
 $$
K^0(X) \cong K_0(C(\Sigma)\rtimes\Z^2) \cong K_0(C(\Sigma)\rtimes_\sigma\Z^2).
 $$
Therefore the result follows.

\end{proof}

\begin{remark}
The computation in this 2D case is thus identical to the untwisted case and we only replace the Bott projection by the Rieffel projection.
\end{remark}

\begin{corollary}[Magnetic gap-labelling in 2 dimensions]\label{2Dgapthm}
$$
\tau^\mu(K_0(C(\Sigma)\rtimes_\sigma\Z^2)) = \Z[\mu] + \Z \theta. 
$$
\end{corollary}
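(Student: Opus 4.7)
The plan is to combine the direct-sum decomposition of Theorem \ref{2DKthy} with the computation of $\tau^\mu$ on each of the two summands. By Theorem \ref{2DKthy},
$$
K_0(C(\Sigma)\rtimes_\sigma\Z^2)\;\simeq\; C(\Sigma,\Z)_{\Z^2}\;\oplus\;\Z\,[\iota(\cP_\theta)],
$$
so the range of $\tau^\mu_*$ is the subgroup of $\R$ generated by $\tau^\mu_*(C(\Sigma,\Z)_{\Z^2})$ and $\tau^\mu_*(\Z\,[\iota(\cP_\theta)])$. The second piece is immediate from Lemma \ref{2Dtrace}, which gives $\tau^\mu(\iota(\cP_\theta))=\tau(\cP_\theta)=\theta$, and hence contributes exactly $\Z\theta$.

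The bulk of the work is thus to identify $\tau^\mu_*(C(\Sigma,\Z)_{\Z^2})$ with $\Z[\mu]$. The first summand in Theorem \ref{2DKthy} arises (as in the untwisted Kasparov/Pimsner--Voiculescu computation inherited through Theorem \ref{twistedCT}) as the image of the map on $K_0$ induced by the natural inclusion $j\colon C(\Sigma)\hookrightarrow C(\Sigma)\rtimes_\sigma\Z^2$, modded out by the coinvariants of the $\Z^2$-action. More precisely, for a clopen $\Lambda\subseteq\Sigma$ with characteristic function $\chi_\Lambda\in C(\Sigma,\Z)\subset C(\Sigma)$, the class of $\chi_\Lambda$ in $C(\Sigma,\Z)_{\Z^2}$ corresponds to $j_*[\chi_\Lambda]\in K_0(C(\Sigma)\rtimes_\sigma\Z^2)$, and these classes generate the first summand. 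The key compatibility is that the restriction of $\tau^\mu$ to $C(\Sigma)\subset C(\Sigma)\rtimes_\sigma\Z^2$ is, by the defining formula \eqref{trace}, simply integration against $\mu$. Consequently
$$
\tau^\mu(j_*[\chi_\Lambda])\;=\;\int_\Sigma\chi_\Lambda(z)\,d\mu(z)\;=\;\mu(\Lambda),
$$
and varying $\Lambda$ (equivalently, letting $\chi_\Lambda$ range over generators of $C(\Sigma,\Z)$) yields exactly $\mu(C(\Sigma,\Z))=\Z[\mu]$. Combining the two computations gives $\tau^\mu_*(K_0(C(\Sigma)\rtimes_\sigma\Z^2))=\Z[\mu]+\Z\theta$.

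The only nontrivial point is the identification of the summand $C(\Sigma,\Z)_{\Z^2}$ with the image of $j_*$ in $K_0$, which I would justify by tracking the splitting through the twisted Connes--Thom isomorphism of Theorem \ref{twistedCT}: the homotopy $\sigma_t$ deforms the twisted crossed product to the untwisted one, and the inclusion of $C(\Sigma)$ (which is unaffected by the twist) commutes with this homotopy, so the image of $j_*$ in the twisted setting is identified with the corresponding image in the untwisted setting, where the description as $C(\Sigma,\Z)_{\Z^2}$ is the one from \cite{BCL}. Once this identification is in hand, the rest is a direct evaluation of traces, and I expect no further technical obstacle in the $p=2$ case.
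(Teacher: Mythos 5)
Your proposal is correct and follows essentially the same route as the paper: the paper's proof likewise combines the decomposition of Theorem \ref{2DKthy} with Lemma \ref{2Dtrace} for the $\Z\theta$ summand, and identifies $\tau^\mu(C(\Sigma,\Z)_{\Z^2})=\mu(C(\Sigma,\Z))=\Z[\mu]$ using invariance of $\mu$. Your extra paragraph tracking the summand through the inclusion $C(\Sigma)\hookrightarrow C(\Sigma)\rtimes_\sigma\Z^2$ and the homotopy $\sigma_t$ merely makes explicit what the paper leaves implicit, so there is no substantive difference.
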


\begin{proof} We use Theorem \ref{2DKthy}.
Now $\tau^\mu(C(\Sigma, \ZZ)_{\ZZ^2}) = \mu(C(\Sigma, \ZZ))=\ZZ[\mu]$ since the measure is invariant. 
By Lemma \ref{2Dtrace}, $\tau_\mu(\iota(\cP_\theta)) = \theta$.

\end{proof}

We next give another proof of this result, using index theory, in order to help investigate the general case.

\begin{proof}[2nd Proof]
By Theorem \ref{twistedCT},
$$
\mu_\theta : K^0(X) \stackrel{\sim}{\longrightarrow} K_0(C(\Sigma)\rtimes_\sigma\Z^2) 
$$
is an isomorphism, where $X=\Sigma\times_{\ZZ^2} \RR^2$. 

By the measured foliated twisted L$^2$-index theorem (see Section \ref{FoliatedIndex}),
$$
\tau_\mu(\mu_\theta(\xi)) = \int_{X} e^{\theta dx_1\wedge dx_2} \wedge {\rm Ch}(\xi) 
$$
$X$ is connected since the $\ZZ^2$-action is minimal cf. Lemma 3 \cite{BO-JFA}, so every vector bundle $\xi$ on $X$ has constant rank.
\begin{align*}
& =  \theta\int_{\TT^2} dx_1 \wedge dx_2 \, \mu(\Sigma) \text{rank}(\xi) +  \int_{X} c_1(\xi)\\
& = \theta\, \text{rank}(\xi) +  \int_{X} c_1(\xi)
\end{align*}
Varying over all vector bundles $\xi$ on $X$, and using Bellissard's gap-labelling theorem in 2D \cite{BCL} (when the magnetic field vanishes i.e. $\theta=0$), the result follows.
\end{proof}

\begin{remark} In section 5, \cite{FH}, they compute a useful example which we now recall and that does not use their Theorem 4.2 in \cite{FH}.
Suppose that $0<\alpha_1<\alpha_2<1$ are two rationally independent irrational numbers. Then $T_j x = x + \alpha_j \, (\text{mod}\, 1), \, j=1, 2$  defines a minimal $\ZZ^2$-action on the circle $\RR/\ZZ$. Define the Cantor set $\Sigma$ to be the circle disconnected along the orbit of $\ZZ^2$ through the origin. Then by fiat, $\ZZ^2$ also acts minimally on $\Sigma$ and this example has a unique invariant probability measure $\mu$. 
What is shown on page 623 in \cite{FH} is that in this case, one has,
$$
 \int_{X} c_1(\xi) \in  \ZZ[\mu] = \ZZ + \ZZ \alpha_1 + \ZZ \alpha_2.
$$
Therefore the magnetic gap-labelling theorem, Corollary \ref{2Dgapthm}, in this particular 2D case is:
$$
\tau^\mu(K_0(C(\Sigma)\rtimes_\sigma\Z^2)) =  \ZZ + \ZZ \alpha_1 + \ZZ \alpha_2 + \Z \theta. 
$$

\end{remark}

\begin{remark}
Corollary \ref{2Dgapthm} was known to Kellendonk, in \cite{Kell}, and we thank him for pointing it out to us. He uses the bulk-boundary correspondence method to reduce to the 1D case. Recently another proof using groupoids has been given in \cite{Kreisel}.
\end{remark}

\begin{remark}
In \cite{Raikov}, they consider the 2D magnetic Schr\"odinger operator,
$$
    H = -\frac{\partial^2}{\partial x^2} +
    \left(-i\frac{\partial}{\partial y} - \theta x \right)^2 + V(x).
    $$
Here $B=\theta\, dx \wedge dy, \, \theta \ne 0$ is a constant magnetic field, and $V$
is a non-constant smooth $\tau$-periodic electric potential that is independent of the $y$ variable.
The self-adjoint operator $H$ on $L^2(\RR^2)$ is proved in \cite{Raikov} to generically have {\em infinitely} many open spectral gaps. 
This is in stark contrast to the Bethe-Sommerfeld conjecture, 
which says that there are only a {\em finite} number of gaps in the spectrum 
of any Schr\"odinger operator with smooth periodic potential $V$ on Euclidean space, 
in the case when the magnetic field vanishes, i.e. $\theta =0$, 
whenever the dimension is greater than or equal to $2$.
The Bethe-Sommerfeld conjecture was proved by L. Parnovski in \cite{Parnovski}. 
In fact in \cite{Raikov}, they also study the Hamiltonian $H_\pm = H \pm W$, where $W\in L^\infty(\RR^2)$ is non-negative and decays at infinity and $\theta\ne 0$,  so that $H_\pm$ is the sort of Hamiltonians that we consider in our paper. They 
find in  \cite{Raikov} that
there are {\em infinitely} many discrete eigenvalues of $H_\pm$ in any open gap in
the spectrum of $\text{spec}(H)$, and the convergence of
these eigenvalues to the corresponding endpoint of the spectral gap is asymptotically Gaussian. This shows that the spectral gaps of magnetic Schr\"odinger operators (of the type considered in this paper) can be rather interesting even in higher dimensions. 

In fact, let $\Theta$ be a skew-symmetric $(2n \times 2n)$ matrix. Putting $\Theta$ in Jordan normal form, we can assume without loss of generality that the associated magnetic field $B=\frac{1}{2} dx^t \Theta dx = \sum_{i=1}^n \Theta_{2i-1, 2i} dx_{2i-1} \wedge dx_{2i}$. Choosing the vector potential $A= \sum_{i=1}^n \Theta_{2i-1, 2i} x_{2i-1} \wedge dx_{2i}$, we see that with $H_A = (d+iA)^\dagger(d+iA) $ that $H_{A,V}  = H_A + V$, where
$$
H_{A,V} = -  \sum_{i=1}^n\left( \left(\frac{\partial}{\partial x_{2i-1}}\right)^2 
+   \sum_{i=1}^n \left(-i \frac{\partial}{\partial x_{2i}} -  \Theta_{2i-1, 2i} \, x_{2i-1}\right)^2 + V_i( x_{2i-1}) \right)
$$
Arguing exactly as in the 2D case, we see that $H_A$ has discrete spectrum with infinite multiplicity, where $V_i$ is a smooth periodic function similar to the 2D case. The argument of \cite{Raikov} easily extends to the higher (even) dimensional case in this way, giving examples  of magnetic Schr\"odinger operators (of the type considered in this paper) that have infinitely many open spectral gaps that are interesting.\\
\end{remark}


\section{Proof of the conjecture in the Jordan block diagonal case}


In this section, we establish the magnetic gap labelling conjecture  \ref{mainconj} in the case when the skew symmetric matrix associated to the constant magnetic field on Euclidean space is in Jordan block diagonal form. Although we consider only even dimensional systems here, the odd dimensional case is similar.

Given a constant magnetic field on Euclidean space $\RR^2$ determined by $\theta \in \R$,  and a $\Z^2$-invariant probability measure $\mu$ on the disorder set $\Sigma$ that is a Cantor set, where the action is minimal,  we have seen in Corollary \ref{2Dgapthm} that the range of the trace on $K$-theory is,
$$
\tau^\mu(K_0(C(\Sigma) \rtimes_\theta \ZZ^2)) = \Z[\mu] + \theta\Z.
$$

Now consider the special 4D situation of the same phenomenon, where the skew-symmetric matrix $\Theta$ is in Jordan block diagonal form,
$$
\Theta = \left(\begin{array}{cccc}0 & -\theta_1 & 0 & 0 \\ \theta_1 & 0 & 0 & 0 \\0 & 0 & 0 & -\theta_2 \\0 & 0 & \theta_2 & 0\end{array}\right)
$$
Consider $\Z^2$-invariant probability measures $\mu_i$ on the disorder sets $\Sigma_i$ that are Cantor sets for $i=1,2$, where the actions are minimal.

Let $\Sigma = \Sigma_1 \times \Sigma_2$. Then $\Z^4$ acts minimally on $\Sigma$ with invariant measure $\mu=\mu_1\times\mu_2$
and one can form the twisted crossed product algebra,
$$
C(\Sigma) \rtimes_\Theta \Z^4 \cong (C(\Sigma_1) \rtimes_{\theta_1} \ZZ^2) \otimes (C(\Sigma_2) \rtimes_{\theta_2} \ZZ^2)
$$
Recall that  the Kunneth Theorem for Tensor Products \cite{Schochet1982} asserts that if  $A$ and $B$ are $C^*$-algebras, with $A$ being nuclear
and $K_\bullet(A)$ being torsion-free, 
then there is a natural isomorphism,
$$
K_0(A\otimes B) \cong K_0(A) \otimes K_0(B) \oplus K_1(A) \otimes K_1(B)
$$
Note that $A= C(\Sigma_1) \rtimes_{\theta_1} \ZZ^2$ and $B= C(\Sigma_2) \rtimes_{\theta_2} \ZZ^2$ satisfy the hypotheses of the Kunneth theorem,
and that 
$$
K_1(C(\Sigma) \rtimes_\theta \ZZ^2)) = \Z[u_1] + \Z[u_2],
$$
where $u_1, u_2$ are the unitaries generating $K_1(C^*(\Z^2))$. Then the tensor product of the $K_1$ groups is,
$$
(\Z[u_1] \oplus \Z[u_2])(\Z[u_1] \oplus \Z[u_2])= \Z[u_1 \cup u_1] \oplus \Z[u_2\cup u_2] + \Z[u_1 \cup u_2]
$$
Applying the trace, we see that 
$$
\tau^\mu(K_1(C(\Sigma) \rtimes_\theta \ZZ^2)) \otimes K_1(C(\Sigma) \rtimes_\theta \ZZ^2)) ) \subset \Z.
$$

Therefore the range of the trace on $K$-theory is,
\begin{align*}
\tau^\mu(K_0(C(\Sigma) \rtimes_\Theta \Z^4)) & = (\Z[\mu_1] + \theta_1\Z) (\Z[\mu_2] + \theta_2\Z)\\
&= \Z[\mu_1] \Z[\mu_2] + \theta_2 \Z[\mu_1]  +  \theta_1 \Z[\mu_2] + \theta_1\theta_2\Z\\
& \subset \Z[\mu_1 \times \mu_2] + \theta_2 \Z[\mu_1]  +  \theta_1 \Z[\mu_2] + \theta_1\theta_2\Z
\end{align*}
since it is not hard to see that $ \Z[\mu_1] \Z[\mu_2]  \subset \Z[\mu_1 \times \mu_2] $. This proves another special case of our magnetic gap-labelling conjecture \ref{mainconj}
and also motivates our conjecture.

By elementary induction, this works for all even dimensional systems of this sort. More precisely, consider the constant magnetic field on 
Euclidean space $\R^{2n}$  given by the skew-symmetric matrix in Jordan diagonal form, 
$$
\Theta = \bigoplus_{j=1}^n \left(\begin{array}{cc}0 & -\theta_j \\ \theta_j & 0\end{array}\right)
$$
Consider $\Z^2$-invariant probability measures $\mu_i$ on the disorder sets $\Sigma_i$ that are Cantor sets for $i=1,2,\ldots, n$, where the actions are all minimal. Let $\Sigma= \Sigma_1 \times \Sigma_2 \ldots \times \Sigma_n$. Then $\Z^{2n}$ acts on $\Sigma$ and one can form the twisted crossed product algebra,
$$
C(\Sigma) \rtimes_\Theta \Z^{2n} \cong \bigotimes_{j=1}^n (C(\Sigma_j) \rtimes_{\theta_j} \ZZ^2) 
$$
Assume the induction hypothesis that 
$$
\tau^\mu(K_0(C(\Sigma) \rtimes_\Theta \Z^{2n})) \subset \sum_I \theta_I \Z[\mu_{I^c}] 
$$
where $\theta_I = \prod_{j\in I} \theta_j$, $\quad \mu_{J} = \prod_{j\in J} \mu_j$, $\quad\mu = \prod_{j=1}^n \mu_j$,  $I^c$ denotes the index set that is complement to $I$ and $|I|\le n$. This is exactly the statement of our conjecture in 
this special case. 

Now let $\Sigma' = \Sigma \times \Sigma_{n+1}$, with $\Z^2$-invariant probability measure $\mu_{n+1}$ on the disorder set $\Sigma_{n+1}$ that is a Cantor set, where the actions is minimal. Consider the constant magnetic field on 
Euclidean space $\R^{2n+2}$  given by the skew-symmetric matrix in Jordan diagonal form, 
$$
\Theta' = \Theta \oplus \left(\begin{array}{cc}0 & -\theta_{n+1} \\ \theta_{n+1} & 0\end{array}\right)
$$
Then $\Z^{2n+2}$ acts on $\Sigma'$ and one can form the twisted crossed product algebra,
$$
C(\Sigma') \rtimes_{\Theta'} \Z^{2n+2} \cong C(\Sigma) \rtimes_\Theta \Z^{2n} \otimes  C(\Sigma_{n+1}) \rtimes_{\theta_{n+1}} \ZZ^2.
$$
Using the Kunneth Theorem for Tensor Products \cite{Schochet1982} and arguing as before, we see that the range of the trace on $K$-theory is,
\begin{align*}
\tau^\mu(K_0(C(\Sigma') \rtimes_{\Theta'} \Z^{2n+2})) & \subset (\sum_I \theta_I \Z[\mu_{I^c}] ) (\Z[\mu_{n+1}] + \theta_{n+1}\Z)\\
& \subset   \sum_I \left(\theta_I \Z[\mu_{I^c \cup (n+1)}]  +\sum_I \theta_{I\cup (n+1)} \Z[\mu_{I^c}]  \right),
\end{align*}
completing the induction step for $n+1$, thereby establishing our magnetic gap-labelling conjecture \ref{mainconj} in the Jordan block diagonal case.\\


\section{The 3D case}\label{sect:3D}


{{We restrict ourselves in this section to the 3D case where we have succeeded to prove Conjecture \ref{mainconj} and Conjecture
\ref{mainconj-minimal}.}}

\subsection{Proof of Conjecture \ref{mainconj}}\label{3D}

We now proceed to prove Conjecture \ref{mainconj} in the 3D case. We prove more precisely that the magnetic gap-labelling group of an aperiodic tiling   which corresponds to an action of $\Z^3$ on the Cantor space $\Sigma$, is contained in the magnetic frequency group defined in Section \ref{Conjecture}. We assume  that there are no nontrivial  globally invariant open subspaces in $\Sigma$. The Boolean algebra  of clopen subspaces of $\Sigma$ is denoted $\maS$ and  it is also endowed with the induced action of $\Z^3$. The generators of the $\Z^3$ action on $\Sigma$ but also the induced one on $\maS$ are denoted generically $(T_i)_{1\leq i\leq 3}$. The free subgroup of $\Z^3$ generated by $T_j$ is again denoted $\langle T_j \rangle $. 

For any $1\leq i < j \leq 3$, we denote as before by $\Z_{ij}[\mu]$  the subgroup of the real line  generated by $\mu$-integrals of $\Z$-valued functions on  $\Sigma$ whose image in the coinvariants under $\Z^{\{i, j\}^c}$ is $\Z^{\{i, j\}}$-invariant.  
A multiplier $\sigma\in H^2(\Z^3, \R/\Z) \simeq \Lambda^2(\R/\Z)^3$ as before  is associated with a skew matrix $\Theta\in M_3(\R)$ and hence with the three real numbers $\Theta_{12}, \Theta_{13}$ and $\Theta_{23}$, which are the entries of the matrix $\Theta$.  We are now in position to state the main result of this section.

\begin{theorem}\label{Gap3d}\
With the previous notations, 
$$
 \Range (\tau^\mu_*) \; \subset \; \Z [\mu] \; +\; \Theta_{12}\; \Z_{12} [\mu] \;+ \; \Theta_{13}\; \Z_{13}[\mu] \; + \; \Theta_{23}\; \Z_{23}[\mu]. 
$$
\end{theorem}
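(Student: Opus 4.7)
My plan is to combine the twisted Connes--Thom isomorphism (Theorem~\ref{twistedCT}) with the odd analogue of the measured twisted foliated index theorem from Section~\ref{FoliatedIndex}, thereby reducing $\Range(\tau^\mu_*)$ to the evaluation of four cohomological integrals on the foliated space $X=\Sigma\times_{\Z^3}\R^3$. Concretely, for $[\xi]\in K^1(X)\cong K_0(C(\Sigma)\rtimes_\sigma\Z^3)$ the odd twisted index formula should take the form
\begin{equation*}
\tau^\mu_*([\xi]) \;=\; \int_X \ch(\xi)_3 \;+\; \sum_{\{i,j\}} \Theta_{ij}\int_X dx_i\wedge dx_j\wedge \ch(\xi)_1,
\end{equation*}
where $\{i,j\}$ ranges over the three unordered pairs in $\{1,2,3\}$ and only the top-degree $3$-form piece of each integrand contributes. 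I would then treat the four summands separately.

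The $I=\emptyset$ term is independent of $\Theta$ and coincides with the cohomological pairing appearing in the non-magnetic Bellissard gap-labelling in dimension three. Combining the 3D gap-labelling theorem of \cite{BKL} (see also \cite{BO,BO-JFA}) with the known integrality of $\ch(\xi)_3$ for odd $K$-theory classes on this $3$-dimensional foliated space yields $\int_X\ch(\xi)_3\in\Z[\mu]$, which accounts for the first summand of the claimed containment.

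For each mixed term I would pass from de Rham cohomology to group cohomology via the Leray--Serre spectral sequence of $X\to\T^3$, which collapses because $\Sigma$ is totally disconnected. This produces a canonical identification $H^1(X;\R)\cong H^1(\Z^3;C(\Sigma,\R))$, under which $\ch(\xi)_1$ is represented by a triple $(c_1,c_2,c_3)\in C(\Sigma,\R)^3$ satisfying the cocycle relation $(T_i^*-\Id)c_\ell=(T_\ell^*-\Id)c_i$ for all $i,\ell$. A direct fundamental-domain calculation on $\Sigma\times[0,1]^3$ should then yield
\begin{equation*}
\int_X dx_i\wedge dx_j\wedge \ch(\xi)_1 \;=\; \pm\int_\Sigma c_k\,d\mu,
\end{equation*}
where $k\in\{1,2,3\}$ is the index complementary to $\{i,j\}$. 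The cocycle relations $(T_i^*-\Id)c_k=(T_k^*-\Id)c_i$ and $(T_j^*-\Id)c_k=(T_k^*-\Id)c_j$ force the image of $c_k$ in the coinvariants $C(\Sigma,\R)_{\Z^{\{k\}}}$ to be fixed both by $T_i^*$ and by $T_j^*$, hence to lie in $(C(\Sigma,\R)_{\Z^{\{k\}}})^{\Z^{\{i,j\}}}$. Provided the cocycle can be chosen with integer values, the pairing with $\mu$ produces an element of $\Z_{ij}[\mu]$, and multiplication by $\Theta_{ij}$ places the mixed term in the desired summand.

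The main obstacle I anticipate is precisely this integrality requirement: one must show that \emph{every} component of $\ch(\xi)$ for $[\xi]\in K^1(X)$ admits a representative by an integer-valued cocycle, not just the top degree component. As the authors emphasise in the introduction, this is the sharpest contrast with Bellissard's original conjecture, where only integrality of the top degree Chern character component is used. In dimension three the Kasparov-type spectral sequence computing $K^*(X)$ is short enough to be attacked at the chain level, which should allow one to lift rational representatives of $\ch(\xi)_1$ to $\Z$-valued cocycles by an explicit combinatorial argument. This chain-level step is the \emph{tour de force} alluded to in the introduction, and is also the reason the same strategy is not expected to extend directly to higher $p$.
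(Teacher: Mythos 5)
Your reduction is essentially the paper's: the twisted Connes--Thom isomorphism plus the odd measured twisted foliated index theorem gives exactly the formula $\tau^\mu_*=\int_X\ch_3+\sum_{i<j}\Theta_{ij}\int_X dx_i\wedge dx_j\wedge\ch_1$, the $\Theta$-independent term is handled by the non-magnetic 3D gap-labelling theorem, and the mixed terms are handled by transporting $\ch_1$ to $H^1(\Z^3,C(\Sigma,\Z))$ via the mapping-torus identification. Where you genuinely diverge is in the treatment of the mixed terms: the paper runs this through the two exact sequences of coinvariants/invariants from \cite{BO-JFA} and two Poincar\'e-duality lemmas (its Lemmas on the maps $\alpha$ and $\beta$), showing that cup product with $\psi_i\cup\psi_j$ factors through $\bigl(C(\Sigma,\Z)_{\langle T_k\rangle}\bigr)^{\langle T_i,T_j\rangle}$, whereas you read the same conclusion directly off the crossed-homomorphism identities $(T_i^*-\Id)c_k=(T_k^*-\Id)c_i$: these show at once that $[c_k]$ is fixed by $T_i^*,T_j^*$ in $C(\Sigma,\Z)_{\langle T_k\rangle}$, and your fundamental-domain evaluation $\int_X dx_i\wedge dx_j\wedge\ch_1=\pm\int_\Sigma c_k\,d\mu$ is precisely what the paper's duality lemmas establish (evaluation of the cocycle on the remaining generator, pushed to coinvariants). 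Your version is more elementary and arguably cleaner; the paper's formulation via the exact sequences is what it later reuses to prove the reverse inclusion under strong minimality.

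The one step you leave open --- integrality of $\ch_1$ --- is not the obstacle you take it to be, and your proposed chain-level lifting through a Kasparov-type spectral sequence is unnecessary. For any compact space, $\ch_1$ of an odd $K$-theory class is the class of the determinant of the representing unitary, so the range of $\ch_1:K^1(X)\to H^1(X,\Q)$ is exactly (the image of) $H^1(X,\Z)$; the paper states this in a single sentence, and under the isomorphism $H^1(X,\Z)\cong H^1(\Z^3,C(\Sigma,\Z))$ this is precisely the integer-valued cocycle representative you need. You also misattribute the introduction's remarks: the ``tour de force'' refers to the explicit construction in Theorem \ref{Morphism} (the easy-half homomorphism under Hypothesis (H)), not to this containment, and the delicate ``integrality of all Chern components'' concerns the general conjecture rather than the degree-one component used here. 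With that one-line integrality argument inserted in place of your speculative lifting step, your proof is complete and sound.
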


{{Before embarking on the long proof of Theorem \ref{Gap3d}, 
we first outline the strategy and the main steps of this proof for the convenience of the reader. 
Lemma \ref{lem:gap1} simplifies the topological side of the measured twisted foliated index theorem (see subsection \ref{FoliatedIndex})
in the 3D case, using in particular the Baum-Connes map \cite{BCH}.  Using Lemma \ref{lem:gap1}  and some homological algebra of group cohomology with coefficients in a 
module, Corollary \ref{cor:MGL+homological algebra} identifies the magnetic gap labelling group in these terms. {{Further homological properties are  exploited in both Lemma \ref{alpha} and Lemma \ref{beta}, together with integrality of the Chern character in the 3D case, to reduce the proof of Theorem \ref{Gap3d} to the computation of the range of the {\em{integral}} group cohomology with coefficients, under the cup-product morphism with respect to the  magnetic field. The computation of this latter range yields to
 the proof of Theorem \ref{Gap3d} and verifies Conjecture \ref{mainconj} in the 3D case. In subsection \ref{sect:3Dstrong minimality}, upon assuming that the action 
 is strongly minimal,
 Lemma \ref{BoundaryMap}  studies the boundary map in group cohomology with coefficients in a 
module and reduces the proof of Conjecture \ref{mainconj-minimal}  to an injectivity condition, see Theorem \ref{3dConj2}. Finally the injectivity condition of Theorem \ref{3dConj2}  is established for strongly minimal actions in 
Corollary \ref{3Dconj2}, proving Conjecture \ref{mainconj-minimal} in the 3D case.}}}

\medskip
{{The rest of the section will thus be devoted to this long direct computation that we have split into lemmas for the sake of clarity as described above.}}  Recall that the mapping torus is the space $X:=(\Sigma\times \R^3)/\Z^3$ where we have divided out by the diagonal action. This is a transversely Cantor foliated space which fibres  over the three torus $\T^3$, in particular pulling back cohomology classes on $\T^3$ we get cohomology classes on $X$. There is a well defined ``Poincar\'e duality'' isomorphism which was explicitly described in \cite{BO-JFA} using leafwise cohomology:
$$
\Psi_{\Z^3} : H^3 (X, \R) \longrightarrow C(\Sigma, \R)_{\Z^3}  \text{ and also the Cech version } \Psi_{\Z^3} : H^3 (X, \Z) \longrightarrow C(\Sigma, \Z)_{\Z^3} .
$$
Here and as in the other sections, for any subgroup $\Gamma$, the subscript $(\; )_\Gamma$ refers to coinvariants  while the superscript $(\; )^\Gamma$ refers to  invariants. Recall the $2$-cohomology class $B$ on the torus $\T^3$ which is associated with $\sigma$. 

\begin{lemma}\label{lem:gap1}
The magnetic gap-labelling group is given by
$$
 \Range (\tau^\mu_*) \; = \; \Z[\mu] \; +\;  \, \left\{  \left\langle \mu\circ  \Psi_{\Z^3},  \tr \left(\frac{u^{-1}\, du}{2i\pi}\right)\, \cup \,B \right\rangle , u\in K^1(X)  \right\}.
$$
\end{lemma}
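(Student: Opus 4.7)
The plan is to combine the twisted Connes--Thom isomorphism of Theorem \ref{twistedCT} with the $p=3$ (odd) case of the twisted measured foliated index theorem of Section \ref{FoliatedIndex}, and to rewrite the resulting formula via Poincar\'e duality. By Theorem \ref{twistedCT}, every class in $K_0(C(\Sigma)\rtimes_\sigma\Z^3)$ has the form $\mu_\theta(u)$ for some $u\in K^3(X)=K^1(X)$. The odd twisted measured foliated index theorem then yields, with $I\subset\{1,2,3\}$ running over subsets of even cardinality,
\[
\tau^\mu_*(\mu_\theta(u)) \;=\; \int_X \ch_3(u) \;+\; \sum_{1\le i<j\le 3}\Theta_{ij}\int_X dx_i\wedge dx_j\wedge \ch_1(u) \;=\; \int_X \ch_3(u) \;+\; \int_X \ch_1(u)\cup B,
\]
where $\ch_1(u)=\tr(u^{-1}du/(2i\pi))$ and $\ch_3(u)$ are the degree-one and degree-three components of the odd Chern character, and $B=\sum_{i<j}\Theta_{ij}\,dx_i\wedge dx_j$ denotes the (pulled-back) cohomology class of the magnetic two-form.

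Next I would identify the two summands separately. The top-degree term $\int_X \ch_3(u)$ is exactly the index pairing arising in the non-magnetic case, so the Bellissard 3D gap-labelling theorem \cite{BKL,BBG} guarantees that the set $\bigl\{\int_X \ch_3(u):u\in K^1(X)\bigr\}$ equals $\Z[\mu]$. For the magnetic term, the explicit Poincar\'e duality $\Psi_{\Z^3}\colon H^3(X,\R)\to C(\Sigma,\R)_{\Z^3}$ of \cite{BO-JFA} identifies integration over $X$ against the transverse measure with the pairing against $\mu$, which gives
\[
\int_X \ch_1(u)\cup B \;=\; \left\langle \mu\circ\Psi_{\Z^3},\; \tr\!\left(\frac{u^{-1}du}{2i\pi}\right)\cup B\right\rangle.
\]
Together these identifications yield the inclusion $\subseteq$.

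For the reverse inclusion I would exploit that $\Range(\tau^\mu_*)$ is a subgroup of $\R$. Once $\Z[\mu]\subseteq\Range(\tau^\mu_*)$ has been established, every magnetic pairing $m=\int_X \ch_1(u)\cup B$ can be rewritten as $\tau^\mu_*(\mu_\theta(u))-\int_X \ch_3(u)$ and so also lies in $\Range(\tau^\mu_*)$, giving $\Z[\mu]+M\subseteq\Range(\tau^\mu_*)$. The remaining point is therefore to realise every $z\in\Z[\mu]$ as $\int_X\ch_3(u)$ for a $u\in K^1(X)$ satisfying $\ch_1(u)=0$; this uses the rational splitting of the Chern character $K^1(X)\otimes\Q\cong H^1(X,\Q)\oplus H^3(X,\Q)$ on the suspension, together with the Bellissard surjection on the degree-three factor.

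The hardest step is precisely this realisation lemma. Although Bellissard surjectivity gives $\int_X \ch_3(u)=z$ for some $u$, arranging simultaneously that $\ch_1(u)=0$ is a K-theoretic integrality/splitting assertion, and is exactly the phenomenon flagged in the Introduction as the main complication distinguishing the magnetic gap-labelling conjecture from its classical counterpart.
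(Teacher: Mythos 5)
Your main line --- identify every class in $K_0(C(\Sigma)\rtimes_\sigma\Z^3)$ with a twisted odd (Toeplitz) index of some $u\in K^1(X)$ via Theorem \ref{twistedCT}, apply the odd measured twisted foliated index theorem to write $\tau^\mu_*$ of that class as $\int_X\ch_3(u)$ plus the pairing $\left\langle \mu\circ\Psi_{\Z^3}, \ch_1(u)\cup B\right\rangle$, and invoke the $3$D gap-labelling theorem to identify the set of $\ch_3$-terms with $\Z[\mu]$ --- is exactly the paper's proof. The genuine gap is in your final step: you reduce the inclusion $\Z[\mu]\subseteq\Range(\tau^\mu_*)$ to a ``realisation lemma'' (for each $z\in\Z[\mu]$ find $u$ with $\int_X\ch_3(u)=z$ and $\ch_1(u)=0$), declare it the hardest point, and leave it unproved; as written the argument is therefore incomplete, and the difficulty you attribute to it (the Chern-integrality issue flagged in the Introduction) is not the relevant obstruction here.

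In fact no realisation lemma and no splitting of the Chern character is needed: the characteristic function $\chi_\Lambda$ of any clopen $\Lambda\subseteq\Sigma$ is a projection in $C(\Sigma)\subseteq C(\Sigma)\rtimes_\sigma\Z^3$ (the multiplier only twists the group variables), and $\tau^\mu(\chi_\Lambda)=\mu(\Lambda)$, so $\Z[\mu]\subseteq\Range(\tau^\mu_*)$ with no index theory at all --- this is the earlier remark in the paper that the non-magnetic gap-labelling group always sits inside the magnetic one. With that in hand, your own group-theoretic manipulation finishes the proof: for every $u$ the magnetic pairing equals $\tau^\mu_*$ of the corresponding index class minus $\int_X\ch_3(u)$, and the latter lies in $\Z[\mu]\subseteq\Range(\tau^\mu_*)$, so the pairing lies in $\Range(\tau^\mu_*)$; the other inclusion is as you argued. (If you insist on your realisation lemma, it is also not hard: the image of $\ch_1$ on $K^1(X)$ is $H^1(X,\Z)=[X,U(1)]$, and a rank-one unitary $v:X\to U(1)$ satisfies $(v^{-1}dv)\wedge(v^{-1}dv)=0$, hence $\ch_3(v)=0$; replacing $u$ by $u-[v]$ with $\ch_1(v)=\ch_1(u)$ kills $\ch_1$ without changing $\int_X\ch_3$.) Apart from this, your route coincides with the paper's, which additionally justifies two points you implicitly assume: that the twisted Connes--Thom isomorphism agrees with the foliated Toeplitz index map $[u]\mapsto\Ind(T_{\sigma,u})$, and that the traces on $C(X)\rtimes_\sigma\R^3$ and $C(\Sigma)\rtimes_\sigma\Z^3$ correspond under Morita equivalence.
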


\begin{proof}
Denote by $\dirac_\sigma$ the leafwise $\sigma$-twisted Dirac operator (see subsection \ref{FoliatedIndex}) along the leaves of our foliated space $X$.  By classical arguments about the Baum-Connes map for $\Z^3$ with coefficients in the $\Z^3$-algebra $C(\Sigma)$, it is easy to check that the twisted Connes' Thom isomorphism of Theorem \ref{twistedCT},
$K^1(X) \rightarrow K_0(C(X)\rtimes_\sigma \R^3)$  coincides with the twisted foliated index map, see subsection \ref{FoliatedIndex} and also \cite{BO-JFA}. More precisely, the $K$-theory of $X$ can be trivially identified with the $K$-theory of the algebra $C^{\infty, 0} (X)$  of continuous leafwise smooth functions on $X$. Given a unitary $u$ in a matrix algebra of $C^{\infty, 0}(X)$, we may consider the Toeplitz operator $T_{\sigma, u}$ associated with $\dirac_\sigma$ and with symbol $u$.  This is a leafwise elliptic $0$-th order pseudodifferential operator on the foliated space $X$ and it has a well defined index class $\Ind (T_{\sigma, u})$ in $K_0( C^{\infty, 0} (X)\rtimes_\sigma \R^3)$ which is defined in \cite{BenameurMathaiPreprint} following the lines of \cite{MooreSchochet}. So the Connes' Thom isomorphism is described as the map $
[u] \longmapsto \Ind (T_{\sigma, u}).$  On the twisted foliation algebra $ C^{\infty, 0} (X)\rtimes_\sigma \R^3$ we have the semi-finite normal trace, denoted equally $\tau^\mu$, associated with the monodromy invariant measure defined by $\mu$, and we know that this trace and the original trace $\tau^\mu$ on $C(\Sigma)\rtimes_\sigma \Z^3$ agree in $K$-theory with respect to the Morita isomorphism (see \cite{BenameurPiazza})
$$
K_0(C(X)\rtimes_{\sigma} \R^3)\longrightarrow K_0(C(\Sigma)\rtimes_\sigma \Z^3).
$$
As a corollary of this discussion, we see that the magnetic gap-labelling group coincides with the range of the map
$$
K^1 (X) \longrightarrow \R \text{ given by } [u] \longmapsto \tau^\mu_*\left(\Ind (T_{\sigma, u})\right).
$$
We now apply the twisted foliated index theorem from subsection \ref{FoliatedIndex} and \cite{BenameurMathaiPreprint} which gives exactly the statement of the lemma. Recall indeed that $\ch (u)=\ch_1(u)+\ch_3(u)$ where $\ch_1(u)=\tr \left(\frac{u^{-1}\, du}{2i\pi}\right)$. On the other hand  the  gap-labelling theorem for $p=3$ implies that the group 
$$
\left\{ \left\langle \mu, \int_{(0,1)^3} \ch_3 (u) \right\rangle , u\in K^1(X) \right\}\, \text{ coincides with }\Z[\mu]. 
$$ 
\end{proof}

It remains thus to identify the second additive subgroup appearing in the previous lemma. Notice that the range of $\ch_1: K^1(X) \rightarrow H^1 (X, \Q)$ is exactly given by $H^1(X, \Z)$ and is hence  isomorphic through $\Psi_{\Z^3}$ to $H^1(\Z^3, C(\Sigma, \Z))$.  Moreover, the following diagram commutes:
$$
\begin{CD}
H^* (X, \Z) \otimes H^*(\T^3, \R)@>{\cup}>> H^*(X, \R) \\
@V{\nu \otimes P }VV     @VV{\nu_\R}V\\
H^*(\Z^3, C(\Sigma, \Z)) \otimes H^*(\Z^3, \R)  @>\cup>> H^*(\Z^3, C(\Sigma, \R))
\end{CD}
$$
where $\nu$ is the isomorphism $H^* (X, \Z) \rightarrow H^*(\Z^3, C(\Sigma, \Z))$ and $\nu_\R$ its version with real coefficients. Here $P$ is the Pontryagin isomorphism $H^*(\T^3, \R)\rightarrow H^*(\Z^3, \R)$.
\medskip

We  denote as before  by $(\psi_j)_{1\leq j\leq 3} $ the generators of $H^1(\Z^3, \Z)$. So, $\Theta$ corresponds to the element of $H^2(\Z^3, \R)$, still denoted by $\Theta$, given by
$$
\Theta := \Theta_{12} \psi_1\cup \psi_2 + \Theta_{13} \psi_1\cup \psi_3 + \Theta_{23} \psi_2\cup \psi_3,
$$
whose cohomology class  corresponds through the Pontryagin duality  to the de Rham cohomology class of the  form $B$ on $\T^3$. 
In view of the previous lemma, we need to compute the range of the composite  map
$$
H^1(X, \Z) \stackrel{\cup B}{\longrightarrow} H^3 (X, \R) \stackrel{\Psi_{\Z^3}}{\longrightarrow} C(\Sigma, \R)_{\Z^3} \stackrel{\mu}{\longrightarrow} \R.
$$

\begin{corollary}\label{cor:MGL+homological algebra}
The magnetic gap-labelling group coincides with the subgroup of $\R$ which is $\Z[\mu]$ plus the range of the map
$$
H^1 (\Z^3, C(\Sigma, \Z)) \stackrel{\cup \Theta}{\longrightarrow} H^3(\Z^3, C(\Sigma, \RR)) \stackrel{\Psi}{\longrightarrow} C(\Sigma, \R)_{\Z^3} \stackrel{\mu}{\longrightarrow} \R.
$$
where $\Psi$ is the usual Poincar\'e duality isomorphism for the group $\Z^3$.
\end{corollary}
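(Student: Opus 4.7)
The plan is to translate the description of the magnetic gap-labelling group from the previous lemma, which is phrased in terms of the mapping torus $X$, into the language of group cohomology of $\Z^3$. The two ingredients needed are already present in the excerpt: the natural isomorphism $\nu:H^*(X,\Z)\to H^*(\Z^3,C(\Sigma,\Z))$ and its real counterpart $\nu_\R$, together with the Pontryagin isomorphism $P:H^*(\T^3,\R)\to H^*(\Z^3,\R)$, under which the de Rham class of $B$ is sent to $\Theta$.

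I would proceed in three steps. First, by the previous lemma, the range of $\tau^\mu_*$ equals $\Z[\mu]$ plus the image of the composite
$$
K^1(X) \stackrel{\ch_1}{\longrightarrow} H^1(X,\R) \stackrel{\cup B}{\longrightarrow} H^3(X,\R) \stackrel{\Psi_{\Z^3}}{\longrightarrow} C(\Sigma,\R)_{\Z^3} \stackrel{\mu}{\longrightarrow} \R,
$$
and, as recalled in the paragraph preceding the corollary, the image of $\ch_1$ is exactly the integral lattice $H^1(X,\Z)\subset H^1(X,\R)$, so one may restrict to starting from $H^1(X,\Z)$. Second, the commutative square displayed just before the corollary, specialised using $P([B])=\Theta$, gives
$$
\begin{CD}
H^1(X,\Z) @>{\cup B}>> H^3(X,\R) \\
@V{\nu}VV     @VV{\nu_\R}V \\
H^1(\Z^3,C(\Sigma,\Z)) @>{\cup\Theta}>> H^3(\Z^3,C(\Sigma,\R)).
\end{CD}
$$

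Third, and this is the only step requiring nontrivial unwinding, I would identify the leafwise Poincar\'e duality $\Psi_{\Z^3}$ of \cite{BO-JFA} with the composite $\Psi\circ\nu_\R$, where $\Psi:H^3(\Z^3,C(\Sigma,\R))\to C(\Sigma,\R)_{\Z^3}$ is the standard top-degree group-cohomology isomorphism for $\Z^3$. Both maps are characterised by integration of a leafwise degree-$3$ class over a fundamental domain $\T^3$ followed by projection to $\Z^3$-coinvariants, so the verification should amount to matching conventions against the explicit formula for $\Psi_{\Z^3}$ in \cite{BO-JFA} rather than proving anything new.

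Combining these three steps and using that $\nu$ is an isomorphism, the composite appearing in the previous lemma would have the same image as
$$
H^1(\Z^3,C(\Sigma,\Z)) \stackrel{\cup\Theta}{\longrightarrow} H^3(\Z^3,C(\Sigma,\R)) \stackrel{\Psi}{\longrightarrow} C(\Sigma,\R)_{\Z^3} \stackrel{\mu}{\longrightarrow} \R,
$$
which is exactly the subgroup appearing in the statement. The main potential obstacle is the identification $\Psi_{\Z^3}=\Psi\circ\nu_\R$; everything else is pure naturality of the cup product combined with the previous lemma.
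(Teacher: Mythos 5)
Your proposal is correct and follows essentially the same route as the paper: the paper's proof consists precisely of the three-column commutative diagram obtained by splicing the cup-product naturality square (with $P([B])=\Theta$) together with the compatibility $\Psi_{\Z^3}=\Psi\circ\nu_\R$, and then invoking the previous lemma and the fact that the image of $\ch_1$ is $H^1(X,\Z)$. The only difference is cosmetic: the paper asserts the right-hand square "from the previous considerations" without further comment, while you flag the identification of the two Poincar\'e dualities as the point needing a convention check against \cite{BO-JFA}.
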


\begin{proof}
From the previous considerations, we deduce the following commutative diagram
$$
\begin{CD}
H^1 (X, \Z) @>{\cup B}>> H^3(X, \R)  @>{\Psi_{\Z^3}}>> C(\Sigma, \R)_{\Z^3} \\
@V{\nu }VV     @VV{\nu_\R}V @VV{=}V\\
H^1(\Z^3, C(\Sigma, \Z))   @>\cup \Theta >> H^3(\Z^3, C(\Sigma, \R)) @>>{\Psi}> C(\Sigma, \R)_{\Z^3} 
\end{CD}
$$
This completes the proof of the corollary, upon using Lemma \ref{lem:gap1}.
\end{proof}

In the sequel, we shall  for simplicity no more denote the isomorphism $\nu: H^*(X, \Z) \simeq H^*(\Z^3, C(\Sigma, \Z))$ and hence denote by $\Psi_{\Z^3}$ the Poincar\'e duality isomorphism 
$$
\Psi_{\Z^3} : H^3(\Z^3, C(\Sigma, \Z)) \longrightarrow H_0(\Z^3, C(\Sigma, \Z))=C(\Sigma, \Z)_{\Z^3}.
$$
This discussion allows to deduce that the magnetic gap-labelling group coincides with $\Z[\mu]$ plus the sum of the ranges of the three maps corresponding to $1\leq i < j \leq 3$ which are
$$
H^1 (\Z^3, C(\Sigma, \Z)) \stackrel{\cup (\Theta_{ij}  \psi_i\cup \psi_j)}{\longrightarrow} H^3(\Z^3, C(\Sigma, \R) \stackrel{\Psi_{\Z^3}}{\longrightarrow} C(\Sigma, \R)_{\Z^3} \stackrel{\mu}{\longrightarrow} \R.
$$
It thus suffices to give the proof for $\Theta_{12} \psi_1\cup \psi_2$ and the two other ranges will be obtained similarly.  Since $\Theta_{12}$ is constant, we only  need to deal with the element $\psi_1\cup \psi_2$ in $H^2(\Z^3, \Z)$. We shall denote by
$$
\Psi_{\langle T_i, T_j \rangle }: H^2(\langle T_i, T_j \rangle , C(\Sigma, \Z)) \longrightarrow C(\Sigma, \Z)_{\langle T_i, T_j \rangle },
$$
the similar isomorphism to $\Psi_{\Z^3}$ but corresponding to the Poincar\'e duality for the mapping torus associated with the $\Z^2$ action on $\Sigma$ corresponding to the generators $T_i$ and $T_j$. Notice that there is an induced action of $T_2$ on $H^*(\langle T_1, T_3 \rangle , C(\Sigma, \Z))$ and on $C(\Sigma, \Z)_{\langle T_1 T_3 \rangle }$ and that $\Psi_{\langle T_1, T_3 \rangle }$ is $\langle T_2 \rangle $-equivariant \cite{BO-JFA}. In particular, the invariants 
$$
H^2(\langle T_1, T_3 \rangle , C(\Sigma, \Z))^{\langle T_2 \rangle }
$$ 
are sent under the map $\Psi_{\langle T_1, T_3 \rangle }$ into the invariants $\left(C(\Sigma, \Z)_{\langle T_1, T_3 \rangle }\right)^{\langle T_2 \rangle }$. 

From Theorem 8 in \cite{BO-JFA}, it is easy to deduce the  following exact sequences
\begin{equation}\label{ExactSequence1}
0\to \left(C(\Sigma, \Z)^{\langle T_1, T_3 \rangle }\right)_{\langle T_2 \rangle } \stackrel{i}{\longrightarrow} H^1 (\Z^3, C(\Sigma, \Z)) \stackrel{\pi}{\longrightarrow }H^1(\langle T_1, T_3 \rangle , C(\Sigma, \Z))^{\langle T_2 \rangle } \to 0
\end{equation}
and
\begin{equation}\label{ExactSequence2}
0 \to \left(C(\Sigma, \Z)^{\langle T_3 \rangle }\right)_{\langle T_1 \rangle } \stackrel{i'}{\longrightarrow} H^1(\langle T_1, T_3 \rangle , C(\Sigma, \Z)) \stackrel{\pi'}{\longrightarrow} \left(C(\Sigma, \Z)_{\langle T_3 \rangle }\right)^{\langle T_1 \rangle } \to 0.
\end{equation}

For the convenience of the reader, let us briefly describe the maps appearing in these exact sequences. The maps $i$ and $i'$ are pull-back maps corresponding to projections onto $\langle T_2\rangle$ and $\langle T_1\rangle$ respectively, composed with inclusions of coefficients. For instance, in the first exact sequence (\ref{ExactSequence1}),  $\left(C(\Sigma, \Z)^{\langle T_1, T_3 \rangle }\right)_{\langle T_2 \rangle } $ is first identified with $H^1(\langle T_2\rangle, C(\Sigma, \Z)^{\langle T_1, T_3\rangle})$. Then $i$ is the composite map
$$
\left(C(\Sigma, \Z)^{\langle T_1, T_3 \rangle }\right)_{\langle T_2 \rangle } \simeq H^1(\langle T_2\rangle, C(\Sigma, \Z)^{\langle T_1, T_3\rangle})\stackrel{\pi_2^*}{\longrightarrow} H^1(\Z^3, C(\Sigma, \Z)^{\langle T_1, T_3\rangle}) \rightarrow H^1(\Z^3, C(\Sigma, \Z)).
$$
Here we have denoted  by $\pi_j: \Z^3\rightarrow \langle T_j\rangle$ the projection.
The similar description holds for the map $i'$ in the second exact sequence. 

In the same way, the maps $\pi$ and $\pi'$ are just  pull-back maps. More precisely, again in the first exact sequence, the natural inclusion  $\iota: \langle T_1, T_3\rangle \hookrightarrow \Z^3$ obtained by crossing with zero for the missing $\langle T_2\rangle$, induces 
$$
\iota^* : H^1 (\Z^3, C(\Sigma, \Z)) \longrightarrow H^1 (\langle T_1, T_3\rangle, C(\Sigma, \Z)),
$$
and it is easy to see that the range of this map is exactly $H^1 (\langle T_1, T_3\rangle, C(\Sigma, \Z))^{\langle T_2\rangle}$. For instance, if $\iota_2: \langle T_2\rangle \hookrightarrow \Z^3$ is the similar inclusion then for any $g\in \langle T_1, T_3\rangle$ and any $g_2\in \langle T_2\rangle$, one checks the following relation  for any $1$-cocycle $c\in H^1 (\Z^3, C(\Sigma, \Z))$
$$
(g_2 \iota^*c - c ) (g) = (g \iota_2^*c - \iota_2^*c) (g_2).
$$
Again the map $\pi'$ is defined similarly.

It is then clear by compatibility of cup products with pull-backs that  the map
$$
\cup \psi_2: H^1 (\Z^3, C(\Sigma, \Z)) \longrightarrow H^2 (\Z^3, C(\Sigma, \Z)),
$$
vanishes on the image of $i$, say on the  subgroup $H^1(\langle T_2 \rangle, C(\Sigma, \Z)^{\langle T_1, T_3 \rangle})$, and hence we deduce that cup product with $\psi_1\cup \psi_2$ induces a well defined map 
$$
\alpha: H^1(\langle T_1, T_3 \rangle , C(\Sigma, \Z))^{\langle T_2 \rangle } \longrightarrow H^3 (\Z^3, C(\Sigma, \Z)).
$$

\begin{lemma}\label{alpha}
The composite map $\Psi_{\Z^3} \circ \alpha$ coincides, up to sign,  with the expected map
\begin{multline*}
H^1(\langle T_1, T_3 \rangle , C(\Sigma, \Z))^{\langle T_2 \rangle } \stackrel{\cup \psi_1}{\longrightarrow} H^2 (\langle T_1, T_3 \rangle , C(\Sigma, \Z))^{\langle T_2 \rangle } \\
\stackrel{\Psi_{\langle T_1, T_3 \rangle }}{\longrightarrow} \left(C(\Sigma, \Z)_{\langle T_1, T_3 \rangle }\right)^{\langle T_2 \rangle } 
\longrightarrow C(\Sigma, \Z)_{\Z^3}.
\end{multline*}
\end{lemma}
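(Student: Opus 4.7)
The plan is to exploit the product decomposition $\Z^3 = \langle T_1,T_3\rangle \oplus \langle T_2\rangle$ and show that the Poincar\'e duality isomorphism $\Psi_{\Z^3}$ factors through $\Psi_{\langle T_1,T_3\rangle}$ and $\Psi_{\langle T_2\rangle}$ in a way compatible with cup product by $\psi_1\cup\psi_2$. Concretely, I would use a K\"unneth decomposition at the level of the bar resolution, noting that $\psi_2$ is the pull-back under the projection $\pi_2:\Z^3\to \langle T_2\rangle$ of the canonical generator of $H^1(\langle T_2\rangle,\Z)\cong \Z$, while $\psi_1$ is a pull-back along $\langle T_1,T_3\rangle \hookrightarrow \Z^3$. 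The fundamental class of the mapping torus for $\Z^3$ also decomposes as an exterior product of the fundamental classes for $\langle T_1,T_3\rangle$ and $\langle T_2\rangle$ under the same K\"unneth identification.

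The key computation is the following: given a class $c\in H^1(\langle T_1,T_3\rangle, C(\Sigma,\Z))^{\langle T_2\rangle}$, lift it through the surjection $\pi$ in \eqref{ExactSequence1} to a class $\tilde c\in H^1(\Z^3, C(\Sigma,\Z))$. Then $\alpha(c)=\tilde c \cup \psi_1\cup \psi_2$. Writing this at the cochain level and applying cap product with the decomposed fundamental class, I would obtain
\[
\Psi_{\Z^3}(\tilde c\cup\psi_1\cup\psi_2) = \pm\, q\bigl(\Psi_{\langle T_1,T_3\rangle}(c\cup\psi_1)\bigr),
\]
where $q:\bigl(C(\Sigma,\Z)_{\langle T_1,T_3\rangle}\bigr)^{\langle T_2\rangle} \to C(\Sigma,\Z)_{\Z^3}$ is the projection to the full coinvariants. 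The underlying mechanism is that cap product with $\psi_2$ against the $\langle T_2\rangle$-fundamental class sends a $T_2$-invariant cochain to its class in the $T_2$-coinvariants, which is precisely the content of Poincar\'e duality for the group $\langle T_2\rangle\cong \Z$. The independence of the lift $\tilde c$ is already ensured (and worth recording) by the fact that $\psi_2\cup\psi_2=0$ in $H^2(\langle T_2\rangle,\Z)=0$, so that any two lifts of $c$ differ by an element of $i\bigl((C(\Sigma,\Z)^{\langle T_1,T_3\rangle})_{\langle T_2\rangle}\bigr)$ whose cup with $\psi_2$ is zero.

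The sign ambiguity mentioned in the statement will come from the choice of orientation convention in the K\"unneth identification of fundamental classes and from the graded-commutativity of cup product (swapping $\psi_2$ past $\psi_1$ introduces a sign). I would track these explicitly in a single normalization and only then fix the sign in the final formula. The main technical obstacle is the bookkeeping with the K\"unneth decomposition of the Poincar\'e duality $\Psi_{\Z^3}$: one needs to verify that the explicit leafwise cohomology model of $\Psi_{\Z^3}$ used in \cite{BO-JFA} is naturally compatible with restriction to $\langle T_1,T_3\rangle\subset\Z^3$ and with the further reduction through $\langle T_2\rangle$. Once this compatibility is in place, the lemma will follow from the factorization above, because the resulting map agrees term-by-term with the composite displayed in the statement, namely cup product with $\psi_1$, Poincar\'e duality $\Psi_{\langle T_1,T_3\rangle}$, and the canonical projection to $C(\Sigma,\Z)_{\Z^3}$.
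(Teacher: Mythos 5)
Your proposal is correct and takes essentially the same route as the paper: reduce via the surjectivity of the restriction map in \eqref{ExactSequence1}, use compatibility of cup products with restriction, and establish the key identity $\Psi_{\Z^3}(\tilde c\cup\psi_1\cup\psi_2)=\pm\, q\bigl(\Psi_{\langle T_1, T_3\rangle}(c\cup\psi_1)\bigr)$ by capping with the fundamental class. The paper justifies that identity by observing that $\psi_2\cap[\Z^3]$ is the homology class dual to $\pm\,\psi_1\cup\psi_3$, which is precisely the content of your K\"unneth factorization $[\Z^3]=\pm\,[\langle T_1,T_3\rangle]\times[\langle T_2\rangle]$, so the two arguments differ only in how this cap-product relation is packaged.
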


\begin{proof}
The proposed composite map is denoted $\Psi_{\Z^3} \circ \alpha'$ and it is clearly well defined. The map $\alpha$ is also well defined and it is by definition induced by
$$
\cup (\psi_1\cup \psi_2): H^1 (\Z^3, C(\Sigma, \Z)) \longrightarrow H^3 (\Z^3, C(\Sigma, \Z)).
$$
Since the map $H^1 (\Z^3, C(\Sigma, \Z))\rightarrow H^1(\langle T_1, T_3 \rangle , C(\Sigma, \Z))^{\langle T_2 \rangle }$ of \eqref{ExactSequence1}  is an epimorphism, it remain to show that $\alpha'$ fits in a commutative diagram
$$
\begin{CD}
H^1 (\Z^3, C(\Sigma, \Z)) @>>> H^1(\langle T_1, T_3 \rangle , C(\Sigma, \Z))^{\langle T_2 \rangle }  \\
@V{\cup (\psi_1\cup\psi_2)}VV     @VV{\alpha'} V\\
H^3 (\Z^3, C(\Sigma, \Z)) @> {=} >> H^3 (\Z^3, C(\Sigma, \Z))
\end{CD}
$$
As recalled above, the epimorphism $H^* (\Z^3, C(\Sigma, \Z)) \longrightarrow H^*(\langle T_1, T_3 \rangle , C(\Sigma, \Z))^{\langle T_2 \rangle }$ is given by restriction using the inclusion $\iota$ of the subgroup  $\langle T_1, T_3 \rangle $ in $\Z^3$, and using that the restricted  cocycles are automatically $\langle T_2 \rangle $ invariant.  Therefore, compatibility of cup products with pullbacks yields the commutativity of the following diagram
$$
\begin{CD}
H^1 (\Z^3, C(\Sigma, \Z)) @>{\iota^*}>> H^1(\langle T_1, T_3 \rangle , C(\Sigma, \Z))^{\langle T_2 \rangle }  \\
@V{\cup \psi_1}VV     @VV{\cup\psi_1} V\\
H^2 (\Z^3, C(\Sigma, \Z)) @> {\iota^*} >> H^2 (\langle T_1, T_3 \rangle, C(\Sigma, \Z))^{\langle T_2 \rangle }
\end{CD}
$$
Now, if $[\Z^3]\in H_3(\Z^3, \Z)$ is the fundamental class which embodies, through cap product, the Poincar\'e duality map $\Psi_{\Z^3}$, and if similarly $[\langle T_1, T_3\rangle]\in H_2(\langle T_1, T_3\rangle, \Z)$ is the corresponding fundamental class for the subgroup $\langle T_1, T_3\rangle$, then the following relation holds for any $c\in H^2(\Z^3, C(\Sigma, \Z))$:
$$
(c \cup \psi_2)\cap [\Z_3] = \pm J_2\left(\iota^*c \cap [\langle T_1, T_3\rangle]\right) \quad \in C(\Sigma, \Z)_{\Z^3},
$$
where $J_2: C(\Sigma, \Z)_{\langle T_1, T_3\rangle} \rightarrow C(\Sigma, \Z)_{\Z^3}$. The reason this relation holds is simply that $\psi_2\cap [\Z^3]$ is the $2$-homology class which is dual to $\pm \psi_1\cup \psi_3$. The proof is now complete.
\end{proof}

We denote in the following lemma by $A$ the image of $H^1(\langle T_1, T_3 \rangle , C(\Sigma, \Z))^{\langle T_2 \rangle }$  in $\left(C(\Sigma, \Z)_{\langle T_3 \rangle }\right)^{\langle T_1 \rangle }$ under the epimorphism of the above second exact sequence \eqref{ExactSequence2}.

\begin{lemma}\label{beta}
The map $\alpha$ induces a well defined morphism $\beta: A \rightarrow H^3(\Z^3, C(\Sigma, \Z))$ such that the composite map $\Psi_{\Z^3}\circ \beta$ is given by the natural map
$$
 \left( C(\Sigma, \Z)_{\langle T_3 \rangle }\right)^{\langle T_1 \rangle } \longrightarrow C(\Sigma, \Z)_{\Z^3}.
$$
\end{lemma}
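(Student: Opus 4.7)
The plan is to leverage Lemma \ref{alpha}, which identifies $\Psi_{\Z^3}\circ \alpha$ with the composite of $\cup \psi_1$, Poincar\'e duality $\Psi_{\langle T_1, T_3\rangle}$, and the natural quotient into $C(\Sigma, \Z)_{\Z^3}$. The key tool will be the Lyndon--Hochschild--Serre spectral sequence for the split extension $\langle T_3\rangle \hookrightarrow \langle T_1, T_3\rangle \twoheadrightarrow \langle T_1\rangle$, whose two-line nature is precisely what produces the exact sequence \eqref{ExactSequence2}.

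First, I would show that $\alpha$ vanishes on the image of $i'$, and hence descends to a well-defined morphism $\beta:A\to H^3(\Z^3, C(\Sigma, \Z))$. Since Poincar\'e duality $\Psi_{\Z^3}: H^3(\Z^3, C(\Sigma, \Z)) \to C(\Sigma, \Z)_{\Z^3}$ is an isomorphism, it is enough by Lemma \ref{alpha} to verify that $\Psi_{\langle T_1, T_3\rangle}\circ(\cup \psi_1)\circ i' = 0$. Cup product with $\psi_1 \in H^1(\langle T_1\rangle, \Z)$ respects the LHS filtration on $H^*(\langle T_1, T_3\rangle, C(\Sigma, \Z))$. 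The subgroup $\mathrm{im}(i')$ is exactly the edge term $E_\infty^{1,0} = H^1(\langle T_1\rangle, C(\Sigma, \Z)^{\langle T_3\rangle})$, and cup with $\psi_1$ maps it into $E_\infty^{2,0}\subseteq H^2(\langle T_1\rangle, C(\Sigma, \Z)^{\langle T_3\rangle})$, which vanishes because $\langle T_1\rangle\simeq \Z$ has cohomological dimension one. Exactness of \eqref{ExactSequence2} then yields the factorisation through $\pi'$.

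Next, I would identify $\Psi_{\Z^3}\circ\beta$ explicitly. For $[c]\in H^1(\langle T_1, T_3\rangle, C(\Sigma, \Z))$, the edge morphism satisfies $\pi'([c]) = [c(T_3)]$ in $\bigl(C(\Sigma, \Z)_{\langle T_3\rangle}\bigr)^{\langle T_1\rangle}$. On the other hand, the fundamental class $[\langle T_1, T_3\rangle]\in H_2(\langle T_1, T_3\rangle, \Z)$ satisfies $\psi_1\cap[\langle T_1, T_3\rangle] = \pm[T_3]$ in $H_1(\langle T_1, T_3\rangle, \Z)$, and by the cup--cap adjunction
$$
\Psi_{\langle T_1, T_3\rangle}\bigl([c]\cup\psi_1\bigr) \,=\, [c]\cap\bigl(\psi_1\cap[\langle T_1, T_3\rangle]\bigr) \,=\, \pm c(T_3) \in C(\Sigma, \Z)_{\langle T_1, T_3\rangle},
$$
which is exactly the image of $\pi'([c])$ under the natural quotient $\bigl(C(\Sigma, \Z)_{\langle T_3\rangle}\bigr)^{\langle T_1\rangle} \to C(\Sigma, \Z)_{\langle T_1, T_3\rangle}$. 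Post-composing with the natural surjection $\bigl(C(\Sigma, \Z)_{\langle T_1, T_3\rangle}\bigr)^{\langle T_2\rangle} \to C(\Sigma, \Z)_{\Z^3}$ from Lemma \ref{alpha} yields the announced description of $\Psi_{\Z^3}\circ\beta$.

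The main obstacle will be the careful bookkeeping of $\langle T_2\rangle$-equivariance and of signs: one must check that both the spectral sequence edge identifications and the cap product identity above are $\langle T_2\rangle$-equivariant, so that $\beta$ is well defined on $A$ rather than on the untwisted image of $\pi'$, and so that its image lands correctly in the invariants before being quotiented to $C(\Sigma, \Z)_{\Z^3}$. These equivariance properties follow from the naturality of the Lyndon--Hochschild--Serre spectral sequence and of Poincar\'e duality with respect to the residual $\langle T_2\rangle$-action already implicit in \eqref{ExactSequence1} and \eqref{ExactSequence2}, and hence should not present a serious obstruction once the normalisations are fixed.
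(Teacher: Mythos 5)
Your proposal is correct and follows essentially the same route as the paper: well-definedness comes from the vanishing of $\cup\,\psi_1$ on classes inflated from $\langle T_1\rangle$ (the paper argues this directly via compatibility of cup products with pullback rather than through the LHS filtration, but the content is identical, namely that $\Z$ has cohomological dimension one), and the identification of $\Psi_{\Z^3}\circ\beta$ rests on the same cap-product identity $\psi_1\cap[\langle T_1,T_3\rangle]=\pm$ the class dual to $\psi_3$, giving $(c\cup\psi_1)\cap[\langle T_1,T_3\rangle]=\pm J_1\bigl({\iota'}^*c\cap[\langle T_3\rangle]\bigr)$, which is exactly your cup--cap computation $\pm c(T_3)$.
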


\begin{proof}
From the previous lemma \ref{alpha}, we see that $\alpha$ is a composite map $\theta\circ (\cup \psi_1)$ with $\theta$ some morphism. Clearly, the map $\cup\psi_1$ vanishes on $H^1 (\langle T_1 \rangle , C(\Sigma, \Z)^{\langle T_3 \rangle })$, hence $\alpha$ vanishes on the kernel of the epimorphism $H^1(\langle T_1, T_3 \rangle , C(\Sigma, \Z)) \longrightarrow H^1(\langle T_3 \rangle , C(\Sigma, \Z))^{\langle T_1 \rangle }$ and finally also on its invariants under the group $\langle T_2 \rangle $. We deduce that the morphism $\beta$ is well defined. 

The proposed composite map can be written as $\Psi_{\Z^3}\circ \beta'$ and it is clearly well defined and can then be restricted to the subgroup $A$. Now, the restriction of $\pi'$ yields  the epimorphism
$$
H^1(\langle T_1, T_3 \rangle , C(\Sigma, \Z))^{\langle T_2 \rangle } \stackrel{\pi'}{\longrightarrow} A.
$$
Hence, it remains as in the proof of the previous lemma to show that $\beta'$ fits in the following commutative diagram
$$
\begin{CD}
H^1(\langle T_1, T_3 \rangle , C(\Sigma, \Z)) @>{{\iota'}^*}>> \left(H^1(\langle T_3\rangle, C(\Sigma, \Z)\right)^{\langle T_1 \rangle }  \\
@V{\cup \psi_1}VV     @VV{J_1\circ \Psi_{\langle T_3\rangle}}V\\
H^2 (\langle T_1, T_3 \rangle , C(\Sigma, \Z)) @>{\Psi_{\langle T_1, T_3 \rangle }}>> C(\Sigma, \Z)_{\langle T_1, T_3 \rangle }
\end{CD}
$$
where $\iota': \langle T_3\rangle \hookrightarrow  \langle T_1, T_3\rangle$ is the inclusion as before and $J_1: C(\Sigma, \Z)_{\langle T_3\rangle} \rightarrow C(\Sigma, \Z)_{\langle T_1, T_3\rangle}$ is again the natural quotient map. The Poincar\'e maps $\Psi_{\langle T_3\rangle}$ and $\Psi_{\langle T_1, T_3 \rangle }$ are cap products by fundamental classes which are denoted respectively $[\langle T_3\rangle]$ and $[\langle T_1, T_3 \rangle]$ and the homology class $\psi_1\cap [\langle T_1, T_3 \rangle]$ clearly coincides with the Poincar\'e dual to $\psi_3$. Hence, for any $c\in H^1(\langle T_1, T_3 \rangle , C(\Sigma, \Z))$, we can write
$$
(c\cup \psi_1) \cap  [\langle T_1, T_3 \rangle] = \pm J_1\left( {\iota'}^*c \cap [\langle T_3\rangle]  \right).
$$
Therefore, the proof is complete. 
\end{proof}

\begin{proof} (of Theorem \ref{Gap3d})
 
From Lemma \ref{alpha}, we deduce that the range of the map 
$$
H^1 (\Z^3, C(\Sigma, \Z)) \stackrel{\cup (\psi_1\cup \psi_2)}{\longrightarrow} H^3(\Z^3, C(\Sigma, \Z) \stackrel{\Psi_{\Z^3}}{\longrightarrow} C(\Sigma, \Z)_{\Z^3} \stackrel{\mu}{\longrightarrow} \R.
$$
coincides with  the range of the map
$$
\langle \mu, \bullet\rangle  \circ \Psi_{\langle T_1, T_3 \rangle }\circ (\cup \psi_1): H^1( \langle T_1, T_3 \rangle , C(\Sigma, \Z))^{\langle T_2 \rangle } \longrightarrow \R. 
$$
From Lemma \ref{beta}, we further deduce that the range of this latter map  is equal to the range of $A$ under the map $\langle \mu, \bullet\rangle  \circ \Psi_{\langle T_3 \rangle }$. Since $A$ is contained in $H^1(\langle T_3 \rangle , C(\Sigma, \Z))^{\langle T_1, T_2 \rangle }$ we deduce from the $\langle T_2 \rangle $-equivariance of   $\Psi_{\langle T_3 \rangle }$ that the magnetic gap-labelling group is contained in $\Z[\mu]$ plus the image under $\langle \mu, \bullet\rangle $ of $\left(C(\Sigma, \Z)_{\langle T_3 \rangle }\right)^{\langle T_1, T_2 \rangle }$. Since this latter is by definition $\Z_{12}[\mu]$ the computation is complete for the pairing with $\psi_1\cup\psi_2$.

Reproducing the same proof for $\psi_1\cup \psi_3$ and $\psi_2\cup \psi_3$ respectively, we deduce that the corresponding ranges are contained respectively in the range under $\mu$ of 
$$
\left(C(\Sigma, \Z)_{\langle T_2 \rangle }\right)^{\langle T_1, T_3 \rangle }\; \text{ and }\; \left(C(\Sigma, \Z)_{\langle T_1 \rangle }\right)^{\langle T_2, T_3 \rangle },
$$
that is in $\Z_{13}[\mu]$ and $\Z_{23}[\mu]$.
If we sum up using again that $\Theta$ is constant, we see that the proof is complete.
\end{proof}

\subsection{Proof of Conjecture \ref{mainconj-minimal}}\label{sect:3Dstrong minimality}

{{We now prove Conjecture \ref{mainconj-minimal} in the 3D case. So, we assume that $T_1$, $T_2$ and $T_3$ all act minimally, this is the strong minimality condition. Let us show more precisely that if the action of the subgroup $\langle T_3\rangle$ is minimal then $\Z_{12}[\mu] $ is contained in our magnetic gap-labelling group. Here and as before we have denoted by   $\Z_{12}[\mu] = \mu ([C(\Sigma, \Z)_{\langle T_3\rangle}]^{\langle T_1, T_2\rangle}).$ So, this result will only use the condition that $T_3$ acts minimally and in fact a priori a weaker assumption, see Theorem \ref{3dConj2} below. Then the same statement can be proved for $T_1$ and $T_2$ yielding to our proof of Conjecture \ref{mainconj-minimal}. }}

{{We notice that the exact sequence \eqref{ExactSequence2} is $\langle T_2\rangle$-equivariant and we thus deduce the cohomology long sequence}}
\begin{multline*}
{{0 \to H^0 (<T_2>, [C(\Sigma, \Z)^{\langle T_3\rangle}]_{\langle T_1\rangle}) \longrightarrow H^0 (<T_2>, H^1(<T_1, T_3>, C(\Sigma, \Z))) \longrightarrow}} \\  {{H^0 (<T_2>, [C(\Sigma, \Z)_{\langle T_3\rangle}]^{\langle T_1\rangle}) }} {{\stackrel{\partial}{\longrightarrow} H^1 (<T_2>, [C(\Sigma, \Z)^{\langle T_3\rangle}]_{\langle T_1\rangle})\simeq  [C(\Sigma, \Z)^{\langle T_3\rangle}]_{{\langle T_1, T_2\rangle}} \longrightarrow \cdots}}
\end{multline*}
{{We thus need to describe the  boundary map $\partial$, and more precisely the map
$$
\hat\partial:  \left( H^1 (\langle T_3\rangle, C(\Sigma, \Z)\right)^{\langle T_1, T_2\rangle} \longrightarrow [C(\Sigma, \Z)^{\langle T_3\rangle}]_{{\langle T_1, T_2\rangle}},
$$
obtained out of $\partial$ using the following two isomorphisms 
$$
\left( H^1 (\langle T_3\rangle, C(\Sigma, \Z)\right)^{\langle T_1, T_2\rangle} \simeq H^0 (<T_2>, [C(\Sigma, \Z)_{\langle T_3\rangle}]^{\langle T_1\rangle})
$$ 
and
$$
H^1 (<T_2>, [C(\Sigma, \Z)^{\langle T_3\rangle}]_{\langle T_1\rangle})\simeq  [C(\Sigma, \Z)^{\langle T_3\rangle}]_{{\langle T_1, T_2\rangle}}.
$$ 
We can state the following general result for our 3D dynamical systems:}}

\begin{lemma}\label{BoundaryMap}
\begin{enumerate}
\item An element of  $\left( H^1 (\langle T_3\rangle, C(\Sigma, \Z)\right)^{\langle T_1, T_2\rangle}$ is a class $[\psi]$ of a $1$-cocycle $\psi\in Z^1 (\langle T_3\rangle, C(\Sigma, \Z)$ such that there exists $f, f'\in C(\Sigma, \Z)$ with
$$
T_1 \psi (1) - \psi (1) = T_3 f - f \text{ and } T_2 \psi (1) - \psi (1) = T_3 f' - f', \quad \text{where $1$ is here viewed in }\langle T_3\rangle. 
$$
\item In the notations of the first item, the element $\hat\partial [\psi]$ is the class in $[C(\Sigma, \Z)^{\langle T_3\rangle}]_{{\langle T_1, T_2\rangle}}$ of the $\langle T_3\rangle$-invariant  element 
$$
T_2 f - f -  T_1f' + f'.
$$
\end{enumerate}
\end{lemma}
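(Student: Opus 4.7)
My plan for the proof is to unwrap the standard identifications for $\Z$-cohomology and then compute the connecting homomorphism of the $\langle T_2\rangle$-cohomology long exact sequence associated to \eqref{ExactSequence2} explicitly.

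For item (1), I would rely on the classical isomorphism $H^1(\Z,M)\cong M_{\Z}$: since $\langle T_3\rangle\simeq\Z$, a $1$-cocycle $\psi$ is determined by $\psi(1)\in C(\Sigma,\Z)$, and it is a coboundary iff $\psi(1)\in(T_3-1)C(\Sigma,\Z)$. Because $\Z^3$ is abelian, the $T_j$-action on $H^1(\langle T_3\rangle,C(\Sigma,\Z))$ is induced from the action on the coefficients, so $T_j$-invariance of the class $[\psi]$ translates directly to $T_j\psi(1)-\psi(1)\in(T_3-1)C(\Sigma,\Z)$, i.e.\ to the existence of $f,f'\in C(\Sigma,\Z)$ with $T_1\psi(1)-\psi(1)=T_3f-f$ and $T_2\psi(1)-\psi(1)=T_3f'-f'$. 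This step is essentially definition-chasing.

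For item (2), the first step is to lift the class $[\psi(1)]\in(C(\Sigma,\Z)_{\langle T_3\rangle})^{\langle T_1\rangle}$ along $\pi'$ to a $1$-cocycle on $\langle T_1,T_3\rangle$. I would take the cocycle $\phi$ with $\phi(T_3)=\psi(1)$ and $\phi(T_1)=f$; since $\langle T_1,T_3\rangle$ is freely generated as an abelian group by the commuting pair $(T_1,T_3)$, the cocycle condition $\phi(T_1T_3)=\phi(T_3T_1)$ reduces precisely to the identity $T_1\psi(1)-\psi(1)=T_3f-f$, so $\phi$ is well defined. By the standard recipe for the connecting map, $\partial[\psi(1)]$ is then represented by the $1$-cocycle on $\langle T_2\rangle$ whose value at $T_2$ is the $(i')$-preimage of $[T_2\phi-\phi]\in H^1(\langle T_1,T_3\rangle,C(\Sigma,\Z))$.

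The main computational step is to exhibit this preimage explicitly. One computes $(T_2\phi-\phi)(T_3)=T_2\psi(1)-\psi(1)=T_3f'-f'$ and $(T_2\phi-\phi)(T_1)=T_2f-f$. To identify this class with an element in the image of $i'=\pi_1^*$ (as described just after \eqref{ExactSequence2}), i.e.\ with a cocycle that vanishes on $T_3$, I would subtract the coboundary $df'$ where $(df')(g)=g\cdot f'-f'$. The result is a cocycle $\chi$ with $\chi(T_3)=0$ and $\chi(T_1)=T_2f-f-T_1f'+f'$. The cocycle identity $\chi(T_1T_3)=\chi(T_3T_1)$ combined with $\chi(T_3)=0$ forces $\chi(T_1)=T_3\chi(T_1)$, so $\chi(T_1)$ automatically lies in $C(\Sigma,\Z)^{\langle T_3\rangle}$; this confirms the asserted $\langle T_3\rangle$-invariance of $T_2f-f-T_1f'+f'$. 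Finally, combining the identification $H^1(\langle T_2\rangle,\,\cdot\,)\cong(\,\cdot\,)_{\langle T_2\rangle}$ with the preimage identification just obtained yields $\hat\partial[\psi]=[T_2f-f-T_1f'+f']\in(C(\Sigma,\Z)^{\langle T_3\rangle})_{\langle T_1,T_2\rangle}$, as claimed. The main bookkeeping hazard is tracking the several isomorphisms $H^1(\Z,M)\cong M_\Z$ and possible signs from the choice of generators and lifting conventions; apart from this, the argument is a direct application of elementary homological algebra.
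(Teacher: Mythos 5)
Your proposal is correct and follows essentially the same route as the paper: item (1) by unwinding $H^1(\langle T_3\rangle,\cdot)$ and the coefficient action, and item (2) by lifting to a $1$-cocycle on $\langle T_1,T_3\rangle$, applying $T_2-\mathrm{Id}$, and subtracting a coboundary to land in the image of $i'$, yielding the representative $T_2f-f-T_1f'+f'$. The only (harmless) differences are cosmetic: the paper normalizes its lift as $\varphi(T_1)=T_3f$, $\varphi(T_3)=T_3\psi(1)$ rather than your $\phi(T_1)=f$, $\phi(T_3)=\psi(1)$, and it checks the $\langle T_3\rangle$-invariance of $T_2f-f-T_1f'+f'$ by direct computation instead of your cocycle-identity argument.
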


{{
\begin{proof}
For the first item, we notice that by definition, any element of $\left( H^1 (\langle T_3\rangle, C(\Sigma, \Z)\right)^{\langle T_1, T_2\rangle}$ is a class $[\psi]$ of a $1$-cocycle $\psi\in Z^1 (\langle T_3\rangle, C(\Sigma, \Z)$ such that $T_1\psi - \psi$ and $T_2\psi - \psi$ are coboundaries for the $\langle T_1\rangle$-action. Now, such $1$-cocycle $\psi$ is totally determined by its value at $1\in  \langle T_3\rangle$ and it is then easy to check that the above conditions coincide exactly with the assumption of existence of $f$ and $f'$ satisfying 
$$
T_1 \psi (1) - \psi (1) = T_3 f - f \text{ and } T_2 \psi (1) - \psi (1) = T_3 f' - f', \quad \text{where $1$ is here viewed in }\langle T_3\rangle. 
$$
Notice first that $T_2 f - f -  T_1f' + f'$ is $\langle T_3\rangle$-invariant, for setting $g:= \psi (1)$, we have using the commutation of the actions
\begin{eqnarray*}
T_3\left( T_2 f - f -  T_1f' + f' \right) & = & (T_2 - I) (f+T_1 g - g) - (T_1-I) (f'+T_2 g -g)\\
& = & T_2f -f  - T_1 f' + f'.
\end{eqnarray*}
To describe the boundary map, we introduce the $1$-cocycle  $\varphi \in Z^1 (\langle T_1, T_3\rangle, C(\Sigma, \Z))$ which satisfies:
$$
\varphi (1, 0) = T_3 f\text{ and } \varphi (0, 1) = T_3 g=T_3 \psi (1).
$$
The explicit formula for $\varphi$ is then obvious and we have for instance when $n_1, n_3\geq 1$:
$$
\varphi (n_1, n_3) = T_1^{n_1}\sum_{k=1}^{n_3} T_3^k g + T_3 \sum_{k=0}^{n_1-1}T_1^k f,
$$
and a similar explicit formula for any $(n_1, n_3)\in \langle T_1, T_3\rangle$. It is then straightforward to show that the class of $\varphi$  is a preimage of $[\psi]$. Now, the map
$$
\langle T_2\rangle \ni n_2 \longmapsto [T_2^{n_2} \varphi  - \varphi]\in H^1(\langle T_1, T_3\rangle, C(\Sigma, \Z)),
$$
is clearly valued in the range of the monomorphism $i'$ of the exact sequence \eqref{ExactSequence2}. Hence, we get in this way a representative for a class in 
$$
H^1 ( \langle T_2\rangle, H^1 (\langle T_1\rangle, C(\Sigma, \Z)^{\langle T_3\rangle}) \simeq [C(\Sigma, \Z)^{\langle T_3\rangle}]_{{\langle T_1, T_2\rangle}}.
$$
The last isomorphism is evaluation at $1$ in $\langle T_2\rangle$ followed by evaluation at $1$ in $\langle T_1\rangle$. To conclude the proof we need a $\langle T_3\rangle$-invariant representative of
$$
  T_2 \varphi (1, 0) - \varphi (1, 0)
  =  T_2 T_3 f - T_3 f 
  $$
But,
 \begin{eqnarray*} 
T_2 T_3 f - T_3 f - [T_1 (T_3f') - T_3f'] & = &  (T_2 - I) (f+T_1g -g) - (T_1 -I) (f' +T_2 g -g) \\
& = & T_2f - T_1 f' -f +f'
\end{eqnarray*}
Hence the $\langle T_3\rangle$ -invariant element $T_2f - T_1 f' -f +f'$ is such a representative and  represents the class $\hat\partial [\psi]$ as announced.
\end{proof}
}}

\begin{theorem}\label{3dConj2}
{{Assume that the natural map 
$$
\left[ C(\Sigma, \Z)^{\langle T_3\rangle}\right]_{\langle T_1, T_2\rangle} \rightarrow C(\Sigma, \Z)_{\langle T_1, T_2\rangle}
$$
induced by the inclusion $C(\Sigma, \Z)^{\langle T_3\rangle}\hookrightarrow C(\Sigma, \Z)$, is injective, then the group $\Z_{12}[\mu]$ is contained in the magnetic gap-labelling group.}}
\end{theorem}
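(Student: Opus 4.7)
The idea is to identify, via the long exact cohomology sequence for $\langle T_2\rangle$ applied to \eqref{ExactSequence2}, the obstruction to lifting an element of $\left[C(\Sigma, \Z)_{\langle T_3\rangle}\right]^{\langle T_1, T_2\rangle}$ to a class in $H^1(\langle T_1, T_3\rangle, C(\Sigma, \Z))^{\langle T_2\rangle}$. Once such lifts exist, \eqref{ExactSequence1} provides a further surjection from $H^1(\Z^3, C(\Sigma, \Z))$, and Lemmas \ref{alpha} and \ref{beta} then guarantee that the image of such a lift under the composition $\mu \circ \Psi_{\Z^3} \circ \bigl(\cup (\Theta_{12}\psi_1\cup \psi_2)\bigr)$ equals $\Theta_{12}\mu(g)$. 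Letting $g$ range through $\left[C(\Sigma, \Z)_{\langle T_3\rangle}\right]^{\langle T_1, T_2\rangle}$ will then produce all of $\Theta_{12}\Z_{12}[\mu]$ inside the magnetic gap-labelling group.

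The connecting map of the long exact sequence is precisely the map $\hat\partial$ of Lemma \ref{BoundaryMap}, and my plan is to show that it vanishes identically under the injectivity hypothesis. Given a cocycle $[\psi]$ with $g = \psi(1)$, $T_1 g - g = T_3 f - f$, and $T_2 g - g = T_3 f' - f'$, Lemma \ref{BoundaryMap}(2) identifies $\hat\partial[\psi]$ with the class of $h := T_2 f - f - T_1 f' + f'$ in $\left[C(\Sigma, \Z)^{\langle T_3\rangle}\right]_{\langle T_1, T_2\rangle}$. The key observation is that $h$ is the sum of a $T_2$-coboundary and (minus) a $T_1$-coboundary, so its image in $C(\Sigma, \Z)_{\langle T_1, T_2\rangle}$ is already zero. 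By the injectivity hypothesis of the theorem, $[h]$ must then vanish in $\left[C(\Sigma, \Z)^{\langle T_3\rangle}\right]_{\langle T_1, T_2\rangle}$ as well, so $\hat\partial \equiv 0$.

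From the resulting exact sequence
$$
H^1(\langle T_1, T_3\rangle, C(\Sigma, \Z))^{\langle T_2\rangle} \stackrel{\pi'}{\longrightarrow} \left[C(\Sigma, \Z)_{\langle T_3\rangle}\right]^{\langle T_1, T_2\rangle} \stackrel{\hat\partial}{\longrightarrow} \left[C(\Sigma, \Z)^{\langle T_3\rangle}\right]_{\langle T_1, T_2\rangle},
$$
the vanishing of $\hat\partial$ yields the surjectivity of $\pi'$; composing with the surjection provided by \eqref{ExactSequence1} then gives a surjection $H^1(\Z^3, C(\Sigma, \Z)) \twoheadrightarrow \left[C(\Sigma, \Z)_{\langle T_3\rangle}\right]^{\langle T_1, T_2\rangle}$, which after pairing with $\mu$ hits all of $\Z_{12}[\mu]$. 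The main (and modest) obstacle is simply the bookkeeping: one must verify that the boundary of the $\langle T_2\rangle$-long exact sequence is indeed the explicit map $\hat\partial$ computed in Lemma \ref{BoundaryMap}(2), and track the various $\langle T_2\rangle$-equivariance identifications $H^0(\langle T_2\rangle, -) = (-)^{\langle T_2 \rangle}$ and $H^1(\langle T_2\rangle, -) = (-)_{\langle T_2 \rangle}$ cleanly through the sequence. Once these identifications are pinned down, everything else is a straightforward diagram chase that combines with Lemmas \ref{alpha} and \ref{beta} to deliver the desired containment.
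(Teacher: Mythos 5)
Your proposal is correct and follows essentially the same route as the paper: you observe that $\hat\partial[\psi]$ is represented by $T_2f-f-T_1f'+f'$, which is already a sum of coboundaries and hence dies in $C(\Sigma,\Z)_{\langle T_1,T_2\rangle}$, so the injectivity hypothesis forces $\hat\partial=0$, and exactness then makes $\pi'$ surjective onto the $\langle T_2\rangle$-invariants, i.e.\ $A=\left[C(\Sigma,\Z)_{\langle T_3\rangle}\right]^{\langle T_1,T_2\rangle}$, after which Lemmas \ref{alpha} and \ref{beta} give the containment. This is exactly the paper's argument, with Lemma \ref{BoundaryMap} already supplying the identification of the connecting map that you flag as the remaining bookkeeping.
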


\begin{proof}\
{{This is an easy corollary of the previous lemma. It is clear from the previous lemma that the composition  of the boundary map $\hat\partial$ with the map 
$$
\left[ C(\Sigma, \Z)^{\langle T_3\rangle}\right]_{\langle T_1, T_2\rangle} \rightarrow C(\Sigma, \Z)_{\langle T_1, T_2\rangle}
$$ 
is the zero map. Therefore, under the assumption of Theorem \ref{3dConj2}, we deduce that the boundary map must be the zero map. Applying the exactness of the cohomology exact sequence, we deduce that the restriction of the epimorphism $\pi'$ in the exact sequence \eqref{ExactSequence2} to the $\langle T_2\rangle$-invariants is hence still an epimorphism onto the $\langle T_2\rangle$-invariants. Therefore, the subgroup $A$ of Lemma \ref{beta} coincides with the whole  group $[C(\Sigma, \Z)_{\langle T_3\rangle}]^{\langle T_1, T_2\rangle}$ and this finishes the proof.}}
\end{proof}

{{ We are now in position to deduce the proof of  Conjecture \ref{mainconj-minimal} in the 3D case.}}

{{\begin{corollary}\label{3Dconj2}
\begin{enumerate}
\item Assume that the group $\langle T_3\rangle$ acts minimally, then the group $\Theta_{12} \Z_{12}[\mu]$ is contained in the magnetic gap-labelling group. 
\item Assume that our action of $\Z^3$ on $\Sigma$ is strongly minimal, then the magnetic gap-labelling group coincides with the magnetic frequency group, i.e. with 
$$
\Z[\mu] \; + \; \Theta_{12} \Z_{12}[\mu] \; + \; \Theta_{13} \Z_{13}[\mu] \; + \; \Theta_{23} \Z_{23}[\mu].
$$
\end{enumerate}
\end{corollary}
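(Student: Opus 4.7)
The plan is to combine Theorem \ref{3dConj2} with the reverse containment of Theorem \ref{Gap3d}, first verifying the injectivity hypothesis of Theorem \ref{3dConj2} under the stated minimality condition, and then running the argument symmetrically for the remaining two pairs of indices.

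For part (1), I would assume $\langle T_3\rangle$ acts minimally and check the hypothesis of Theorem \ref{3dConj2} as follows. Continuous $\Z$-valued functions on $\Sigma$ are locally constant, while minimality means every $\langle T_3\rangle$-orbit is dense, so any $\langle T_3\rangle$-invariant element of $C(\Sigma,\Z)$ must be constant on a dense subset and hence globally constant. Thus $C(\Sigma,\Z)^{\langle T_3\rangle}=\Z$, the induced $\langle T_1,T_2\rangle$-action on $\Z$ is trivial, and $[C(\Sigma,\Z)^{\langle T_3\rangle}]_{\langle T_1,T_2\rangle}\simeq \Z$. The natural map $\Z\to C(\Sigma,\Z)_{\langle T_1,T_2\rangle}$ is then injective because $\mu$ is $\Z^3$-invariant and therefore descends to a homomorphism on the coinvariants sending the class of the constant function $n$ to $n\mu(\Sigma)=n$. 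Theorem \ref{3dConj2} then yields $\Theta_{12}\Z_{12}[\mu]\subset \Range(\tau^\mu_*)$.

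For part (2), strong minimality in dimension $3$ means each $\langle T_j\rangle$ acts minimally, so part (1) can be applied after permuting the roles of the generators. Concretely, I would observe that the entire chain of lemmas behind Theorem \ref{3dConj2}, namely Lemmas \ref{alpha}, \ref{beta} and \ref{BoundaryMap} together with the exact sequences \eqref{ExactSequence1} and \eqref{ExactSequence2}, uses only a Hochschild--Serre-type splitting $\Z^3=\langle T_j\rangle\oplus \langle T_i,T_k\rangle$ and the graded-commutative cup product, so relabelling furnishes the analogue of Theorem \ref{3dConj2} for any pair $\{i,j\}$ with $\langle T_k\rangle$ in the distinguished role. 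This gives $\Theta_{ij}\Z_{ij}[\mu]\subset\Range(\tau^\mu_*)$ for every $i<j$. The remaining summand $\Z[\mu]$ is contained in $\Range(\tau^\mu_*)$ essentially for free: for any clopen $\Lambda\subset\Sigma$, the indicator $\chi_\Lambda$ is a projection in $C(\Sigma)\hookrightarrow C(\Sigma)\rtimes_\sigma\Z^3$ with $\tau^\mu_*([\chi_\Lambda])=\mu(\Lambda)$, and such values generate $\Z[\mu]$ by definition. Combining these containments with the reverse inclusion provided by Theorem \ref{Gap3d} gives equality of the magnetic gap-labelling group and the magnetic frequency group.

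The main obstacle will be the symmetry argument in part (2): one has to confirm that the cohomological machinery behind Theorem \ref{3dConj2}, originally set up with $T_3$ in a distinguished role, is genuinely permutation-invariant. I expect this reduces to re-indexing and replaying the same proofs, with the roles of the two exact sequences swapped according to which generator is singled out, but it is the only part of the plan that requires explicit bookkeeping rather than a direct application of previously established results.
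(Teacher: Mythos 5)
Your proposal is correct and follows essentially the same route as the paper: minimality of $\langle T_3\rangle$ forces $C(\Sigma,\Z)^{\langle T_3\rangle}$ to consist of constants, integration against the invariant probability measure $\mu$ then shows the map $\left[C(\Sigma,\Z)^{\langle T_3\rangle}\right]_{\langle T_1,T_2\rangle}\to C(\Sigma,\Z)_{\langle T_1,T_2\rangle}$ is injective, and Theorem \ref{3dConj2} applies; part (2) follows by permuting the generators (strong minimality for $p=3$ odd being exactly minimality of each $\langle T_j\rangle$) and combining with Theorem \ref{Gap3d}, with $\Z[\mu]$ already sitting inside the range via traces of projections $\chi_\Lambda\in C(\Sigma)$. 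The only cosmetic difference is that you spell out the permutation-symmetry and the $\Z[\mu]$ containment explicitly, which the paper treats as immediate.
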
}}

\begin{proof}
{{We need to show  that when the group $\langle T_3\rangle$ acts minimally on $\Sigma$, the natural map 
$$
\left[ C(\Sigma, \Z)^{\langle T_3\rangle}\right]_{\langle T_1, T_2\rangle} \rightarrow C(\Sigma, \Z)_{\langle T_1, T_2\rangle}
$$
is a monomorphism and apply Theorem \ref{3dConj2}. But notice that if the action of $\langle T_3\rangle$ is minimal, all elements of $C(\Sigma, \Z)^{\langle T_3\rangle }$ are constant integer valued functions given by $n \times \chi$ where $n\in \Z$ and $\chi$ is the constant function with value $1$ on $\Sigma$. Assume then that the image of the class $[n \times \chi]\in \left[ C(\Sigma, \Z)^{\langle T_3\rangle}\right]_{\langle T_1, T_2\rangle}$ under the above map is zero in $C(\Sigma, \Z)_{\langle T_1, T_2\rangle}$, then its integral against $\mu$ must be trivial. Since $\mu$ is a probability measure, this implies in turn that $n=0$. }}

{{The second item is clear since we can permute the roles of the generators $T_1, T_2, T_3$. }}

\end{proof}


\section{{{An explicit construction for the ``easy-half''}}}\label{3D1}


The main result of this section is  Theorem \ref{Morphism} which allows in particular to deduce equality in Conjecture \ref{mainconj} under a technical hypothesis on the given tiling, see Proposition \ref{EasyInclusion} and Definition \ref{defn:hypothesisH}. As explained previously, the inclusion of the magnetic frequency group in the magnetic gap-labelling group is expected to hold under suitable dynamical conditions on the given aperiodic  tiling. In the present section, given a coinvariant class $f\in C(\Sigma, \Z)_{\Z^{I^c}}$ which is $\Z^I$-invariant, we prove under a suitable combinatorial assumption on representatives of $f$, that its integral against the probability measure $\mu$, multiplied by the Pfaffian of $\Theta_I$, belongs   to the magnetic gap-labelling group.  The proof relies on the existence, for any multiplier $\sigma$ of the subgroup $\Z^I$ of $\Z^p$, of a commutative diagram (see Theorem \ref{Morphism} again):
$$
\begin{CD}
K_0( C^* \Z^{I}  , \sigma)@>  {\Phi_{f, *}}  >> K_0( C(\Sigma) \rtimes_{i_*\sigma} \Z^p) \\
@V{\tau_*}VV     @VV {\tau^{\mu}_*} V\\
\R  @> {\mu(f) \times \bullet} >> \R
\end{CD}
$$
Such commutative diagram can be interpreted as a twisted (non-smooth) version of the classical Morita extension map associated with a given transversal in a foliation \cite{BenameurHeitsch, ConnesSkandalis}. 
For the clarity of the exposition, we have restricted ourselves to the 3D case, where the construction is already  technically involved.  The assumption on the class $f$ allows us to reduce the problem to classes represented by characteristic classes of clopen subspaces which live in the magnetic frequency group and the statement of Theorem \ref{Morphism} corresponds to such clopen subspaces.  The main step in the proof of Theorem \ref{Morphism}  is Proposition \ref{TechnicalProposition} which allows one to construct an explicit Morita morphism between the relevant $C^*$-algebras, and the corresponding commutative diagram then follows immediately. 

Fix a clopen subspace $\Lambda$ of the Cantor space $\Sigma$ such that the image of the characteristic function of $\Lambda$ 
in the $\langle T_3\rangle$-coinvariants, 
is a $\langle T_1, T_2\rangle$-invariant class.  In the following definition and in the major part of this section, we have given a specific role to the third generator $T_3$, but the similar constructions and proofs work if we operate any permutation of the generators $T_1$, $T_2$ and $T_3$.

\begin{definition}\label{defn:hypothesisH}
The  clopen subspace $\Lambda$  satisfies Hypothesis (H) if we can decompose $\Lambda$ into clopen subsets $(K_{i})_{1\leq i \leq q}$
$$
\Lambda = (K_{1} \amalg \cdots \amalg K_{r})\amalg (K_{{r+1}} \amalg \cdots \amalg K_{q}) = K \amalg (K_{{r+1}} \amalg \cdots \amalg K_{q}),
$$
such that
\begin{itemize}
\item For $r+1\leq i \leq q$, there exists $j_3(i)\in \{1, \cdots, r\}$ such that $K_{i}=T_3^{\beta_i} K_{{j_3(i)}}$ for some $\beta_i\in \Z$. In particular, $\langle T_3\rangle \Lambda = \langle T_3\rangle K$.
\item $
\langle T_3 \rangle  K_{i} \cap \langle T_3 \rangle  K_{j} = \emptyset \text{ for } 1\leq i\neq j \leq r.$
\item For any $1\leq i\leq q$, $\exists (j_1(i), j_2(i))\in \{1, \cdots , r\}^2$ and $(k_1(i), k_2(i))\in \Z^2$ such that  
$$
T_1  (K_{i}) = T_3^{k_1(i)} \left(K_{{j_1(i)}}\right) \text{ and } T_2  (K_{i}) = T_3^{k_2(i)}\left( K_{{j_2(i)}}\right), \quad 1\leq i \leq q.
$$
\end{itemize}
\end{definition}

So, this means more specifically that there are (unique) surjections $j_1, j_2: \{1, \cdots, q\} \to \{1, \cdots ,r\}$ whose restrictions to $\{1, \cdots, r\}$ are permutations.
Notice that  we get a well defined map $j_3:\{r+1, \cdots, q\}\rightarrow \{1, \cdots, r\}$ that we shall extend to $\{1, \cdots, q\}$ by setting $j_3(i):=i$ and $\beta_i=0$ if $1\leq i\leq r$. Moreover, the values of $j_1(i)$ and $j_2(i)$ for $i=r+1, \cdots, q$ are prescribed by the values on $\{1, \cdots, r\}$ since we must have 
$$
j_1 = j_1 \circ j_3 \text{ and } j_2=j_2\circ j_3\text{ on } \{r+1, \cdots, q\}. 
$$
Since the projection of the characteristic function of $\Lambda$ in the coinvariants modulo $\langle T_3 \rangle $ is $\langle T_1, T_2 \rangle $-invariant, the cardinal $\varphi_j$ of $j_3^{-1} (j)$ is automatically constant on each orbit under $\langle j_1, j_2 \rangle $.

We shall concentrate on the restricted permutations to $\{1, \cdots, r\}$ and then extend the constructions to $\{1, \cdots, q\}$. It is important in the sequel that we can exploit the relative freeness in the choice of the  integer valued maps $k_1$ and $k_2$.  An easy consequence of the definitions is that the two permutations $j_1$ and $j_2$ of $\{1, \cdots, r\}$ commute. More precisely, notice that since $T_1T_2=T_2T_1$, we have by definition of $j_1$ and $j_2$ that
$$
\langle T_3 \rangle  K_{{j_1j_2(i)}} = \langle T_3 \rangle  K_{{j_2j_1(i)}}.
$$
But for $1\leq i\leq r$ we know that the orbits under $\langle T_3 \rangle $ of the clopen sets $K_{j}$ are disjoint. Hence necessarily $j_2j_1(i)=j_1j_2(i)$. In fact, we easily see that this commutation relation holds on $\{1, \cdots, q\}$.

The goal of this section is to prove the following theorem and to explain its relation with the easy-half of the conjecture.

\begin{theorem}\label{Morphism}
For any clopen $\Lambda$ in $\Sigma$ as above which further satisfies Hypothesis (H) and any multiplier $\sigma$ of the group $\langle T_1, T_2\rangle$, there exists a $C^*$-algebra homomorphism
$$
\Phi_\Lambda : C^* (\langle T_1, T_2 \rangle , \sigma) \longrightarrow M_\infty( C(\Sigma) \rtimes_{i_*\sigma} \Z^3),
$$
such that the following diagram commutes
$$
\begin{CD}
K_0( C^* (\langle T_1, T_2 \rangle , \sigma)@>  {\Phi_{\Lambda, *}}  >> K_0( C(\Sigma) \rtimes_{i_*\sigma} \Z^3) \\
@V{\tau_*}VV     @VV {\tau^{\mu}_*} V\\
\R  @> {\mu(\Lambda) \times \bullet} >> \R
\end{CD}
$$
The same statement holds after any permutation of the generators $T_1$, $T_2$ and $T_3$.
\end{theorem}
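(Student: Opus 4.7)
The strategy is to realize $C^*(\langle T_1, T_2\rangle, \sigma)$ as a $*$-subalgebra of a corner of $M_q(C(\Sigma) \rtimes_{i_*\sigma} \Z^3)$, using the combinatorial data of Hypothesis (H) to build explicit magnetic translations. Morally, $\Lambda = K_1 \sqcup \cdots \sqcup K_q$ serves as a quasi-transversal for the action of $\langle T_1, T_2\rangle$ modulo $\langle T_3\rangle$, and one builds two-by-two twisted magnetic translations on this transversal.

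\emph{Construction of the generators.} Set $p := \sum_{i=1}^q e_{ii} \otimes \chi_{K_i}$, a projection in $M_q(C(\Sigma))\subset M_q(C(\Sigma)\rtimes_{i_*\sigma}\Z^3)$ whose trace is $\tau^\mu(p)=\sum_{i=1}^q\mu(K_i)=\mu(\Lambda)$. For $\ell=1,2$ I aim to produce a unitary $V_\ell\in pM_qp$ of the form
$$V_\ell \;=\; \sum_{i=1}^q e_{\pi_\ell(i),\,i}\otimes u_3^{-m_\ell(i)}\,u_\ell\,\chi_{K_i},$$
where $\pi_\ell$ is a permutation of $\{1,\ldots,q\}$ lifting $j_\ell$ (i.e.\ $j_3\circ\pi_\ell = j_\ell$) and $m_\ell(i)\in\Z$ is determined by $T_\ell K_i = T_3^{m_\ell(i)} K_{\pi_\ell(i)}$. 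The invariance of $[\chi_\Lambda]$ in the $\langle T_3\rangle$-coinvariants forces $\#j_\ell^{-1}(j)=\varphi_j$ for each base index $j\leq r$, so bijections $j_\ell^{-1}(j)\to j_3^{-1}(j)$ exist, giving such permutations. The delicate point is to choose $\pi_1$ and $\pi_2$ simultaneously so as to commute; this is possible because $j_1$ and $j_2$ already commute on $\{1,\ldots,r\}$ and the $\langle T_3\rangle$-orbits of the base pieces $K_1,\ldots,K_r$ are mutually disjoint, allowing one to organize the extra pieces $K_{r+1},\ldots,K_q$ into layers indexed by $\langle j_1,j_2\rangle$-orbits on the base. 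This is precisely where Hypothesis (H) enters.

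\emph{Verification of the twisted commutation.} A routine calculation (using $u_\ell\chi_{K_i}=\chi_{T_\ell K_i}u_\ell$ and the disjointness of the pieces) shows $V_\ell^*V_\ell = V_\ell V_\ell^* = p$, so $V_\ell$ is a unitary in $pM_qp$. For the identity $V_1V_2 = \sigma(e_1,e_2)V_2V_1$, one checks that the matrix parts agree, $e_{\pi_1\pi_2(i),\,i}=e_{\pi_2\pi_1(i),\,i}$ (by the commuting-lift property), and that the operator parts agree up to the factor $\sigma(e_1,e_2)$, via $u_1u_2=\sigma(e_1,e_2)u_2u_1$ combined with the cocycle identity
$$m_1(\pi_2(i))+m_2(i)\;=\;m_2(\pi_1(i))+m_1(i).$$
This identity is an avatar of $T_1T_2=T_2T_1$ applied to $K_i$: expanding $T_\ell K_i=T_3^{m_\ell(i)}K_{\pi_\ell(i)}$ on both sides and using disjointness of the base $\langle T_3\rangle$-orbits forces agreement of the $T_3$-exponents. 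By the universal property of $C^*(\langle T_1,T_2\rangle,\sigma)$ as a twisted group $C^*$-algebra generated by unitaries subject to the twisted commutation, the assignment $U_\ell\mapsto V_\ell$ extends uniquely to a $*$-homomorphism $\Phi_\Lambda: C^*(\langle T_1,T_2\rangle,\sigma)\to pM_q(C(\Sigma)\rtimes_{i_*\sigma}\Z^3)p\hookrightarrow M_\infty(C(\Sigma)\rtimes_{i_*\sigma}\Z^3)$.

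\emph{Trace formula and conclusion.} The canonical trace on $C^*(\langle T_1,T_2\rangle,\sigma)$ satisfies $\tau(U_1^nU_2^m)=\delta_{n,0}\delta_{m,0}$. For $(n,m)\neq(0,0)$ the image $\Phi_\Lambda(U_1^nU_2^m)$ is, up to scalars, a sum of terms $e_{\pi_1^n\pi_2^m(i),\,i}\otimes u_3^{?}u_1^nu_2^m\chi_{K_i}$; even at the diagonal entries where $\pi_1^n\pi_2^m(i)=i$, the operator part has no contribution to the $(0,0,0)$-Fourier mode in $\Z^3$, so the $\tau^\mu$-trace of each entry vanishes. For $n=m=0$, $\Phi_\Lambda(1)=p$ gives $\tau^\mu(p)=\mu(\Lambda)$. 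Hence $\tau^\mu\circ\Phi_\Lambda=\mu(\Lambda)\cdot\tau$ at the level of traces, and the diagram commutes after passage to $K_0$. The main obstacle is the simultaneous combinatorial construction of commuting lifts $\pi_1,\pi_2$ together with the verification of the cocycle identity for $m_1,m_2$; both rest on Hypothesis (H), especially the disjointness of $\langle T_3\rangle$-orbits of the base pieces. The version with any permutation of $(T_1,T_2,T_3)$ is obtained by the evident symmetry of the construction.
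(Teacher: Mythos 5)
Your overall strategy is the same as the paper's -- an explicit ``magnetic translation'' picture of $C^*(\langle T_1,T_2\rangle,\sigma)$ inside matrices over $C(\Sigma)\rtimes_{i_*\sigma}\Z^3$ supported on the pieces $K_i$, with the trace computation reducing to the $(0,0,0)$-Fourier mode and $\tau^\mu(p)=\mu(\Lambda)$ -- and your corner/permutation-unitary formulation is in fact cleaner than the paper's, which works in size $\hat\varphi_q=\sum_j\varphi_j$ with weights $1/\sqrt{\varphi_j\varphi_{j'}}$ spread over the fibres of $j_3$ precisely to avoid choosing permutation lifts. But there is a genuine gap at the one point that carries the real content. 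You claim that the exponent identity $m_1(\pi_2(i))+m_2(i)=m_2(\pi_1(i))+m_1(i)$ is forced by $T_1T_2=T_2T_1$ together with the disjointness of the base $\langle T_3\rangle$-orbits. What commutativity actually gives is
$$
T_3^{\,m_2(i)+m_1(\pi_2(i))}K_{\pi_1\pi_2(i)}\;=\;T_3^{\,m_1(i)+m_2(\pi_1(i))}K_{\pi_2\pi_1(i)},
$$
so with commuting lifts one only concludes $T_3^{\,a-b}K_j=K_j$ for $j=\pi_1\pi_2(i)$. Disjointness of the orbits of \emph{distinct} base pieces says nothing about the $\langle T_3\rangle$-stabilizer of a single $K_j$, and Hypothesis (H) does not require these stabilizers to be trivial: the pieces may be $T_3$-periodic or even $T_3$-invariant clopens. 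In that case the integers $m_\ell(i)$ are not determined by the translation property, and an arbitrary choice violates your identity. Concretely, take $r=q=2$, $j_1$ the transposition, $j_2=\mathrm{id}$, and $K_1,K_2$ disjoint $T_3$-invariant (and $T_2$-invariant) clopens; the identity at $i=1$ forces $m_2(1)=m_2(2)$, which an arbitrary admissible choice need not satisfy. So the identity must be \emph{arranged}, not observed, and this is exactly the content of the paper's first lemma (existence of $k_1,k_2$ satisfying the translation property together with relation \eqref{compatibility}), whose proof is a nontrivial orbit-by-orbit construction exploiting the ``relative freeness'' in the choice of exponents. Your proof needs either such a lemma or an extra hypothesis of trivial stabilizers, which is not part of (H).

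Two smaller points. First, the existence of commuting permutation lifts $\pi_1,\pi_2$ of $j_1,j_2$ with $j_3\circ\pi_\ell=j_\ell$ is true but deserves an argument: since $\varphi_j$ is constant along $\langle j_1,j_2\rangle$-orbits, one can trivialize the fibres of $j_3$ over each orbit and let the lifts act trivially in the fibre direction; as stated, this is asserted rather than proved, though easily fixable. Second, once compatible exponents on the generators are secured, your route genuinely shortcuts the paper, which must construct the whole iterated family $k_2^{n_2}k_1^{n_1}$ (its Proposition with relations \eqref{induction}) because its $\Phi_\Lambda$ is defined entrywise on all group elements and multiplicativity is checked directly; in your universal-property argument only the generator relation is needed, and indeed one can recover your $m_\ell$ from the paper's $k_\ell$ via $m_\ell(i)=k_\ell(i)-\beta_{\pi_\ell(i)}$. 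So the approach is salvageable, but as written the key combinatorial step is missing.
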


The proof of this theorem will occupy the rest of this section and will be split into many lemmas. Since the generators play a perfect symmetric role, once the proof is given with the special role of $T_3$, it will hold immediately for all permutations of the generators.

\begin{lemma}
With the previous notations, there exist integer  valued maps $k_1, k_2: \{1, \cdots, q\} \rightarrow \Z$ such that for any $i=1, \cdots, q$:
\begin{itemize}
\item $T_1 T_3^{-k_1(i)} (K_{i}) = K_{{j_1(i)}}$  and $T_2 T_3^{-k_2(i)} (K_{i}) = K_{{j_2(i)}}$.
\item The following relations hold:
\begin{equation}\label{compatibility}
k_2 (j_1(i)) - k_2 (i) = k_1 (j_2(i)) - k_1 (i).
\end{equation}
\end{itemize}
\end{lemma}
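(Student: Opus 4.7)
The first bullet is essentially a direct restatement of the third condition of Hypothesis (H): since $T_1$ and $T_3$ commute, the identity $T_1(K_i) = T_3^{k_1(i)}(K_{j_1(i)})$ is equivalent to $T_1 T_3^{-k_1(i)}(K_i) = K_{j_1(i)}$, and similarly for $T_2$. My plan is therefore to extract $k_1(i), k_2(i)$ directly from (H) and to focus the remaining work on the compatibility relation \eqref{compatibility}.

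For \eqref{compatibility}, I would exploit the commutation $T_1 T_2 = T_2 T_1$. Applying both sides to $K_i$ and substituting the first bullet twice yields
\begin{align*}
T_1 T_2(K_i) &= T_3^{k_2(i)+k_1(j_2(i))}(K_{j_1 j_2(i)}), \\
T_2 T_1(K_i) &= T_3^{k_1(i)+k_2(j_1(i))}(K_{j_2 j_1(i)}).
\end{align*}
Because $j_1 j_2 = j_2 j_1$ (a fact already noted just before the lemma), both right-hand sides are $\langle T_3 \rangle$-translates of the same clopen $K_{j_1 j_2(i)}$. Setting $D(i) := k_2(i)+k_1(j_2(i))-k_1(i)-k_2(j_1(i))$, the comparison gives $T_3^{D(i)}(K_{j_1 j_2(i)}) = K_{j_1 j_2(i)}$.

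The last step is to conclude $D(i) = 0$. I would argue that each representative clopen $K_j$ (for $j \in \{1,\ldots,r\}$) has trivial setwise $\langle T_3 \rangle$-stabilizer, so that $T_3^{D(i)}(K_{j_1 j_2(i)}) = K_{j_1 j_2(i)}$ forces $D(i)=0$. This is where the disjointness condition $\langle T_3\rangle K_i \cap \langle T_3\rangle K_j = \emptyset$ for distinct representatives is invoked, possibly after a harmless refinement of the original decomposition into smaller clopens using the Cantor topology of $\Sigma$. For $i \in \{r+1,\ldots,q\}$ the values $k_1(i), k_2(i)$ are then propagated from the case $i \le r$ using $K_i = T_3^{\beta_i}(K_{j_3(i)})$ and the commutation of $T_3$ with $T_1, T_2$; the cocycle identity survives because the shift $\beta_i$ cancels on both sides of \eqref{compatibility}.

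The main obstacle is the $D(i) = 0$ step: verifying (or, if needed, arranging by refinement) the triviality of the $\langle T_3\rangle$-stabilizer on each representative $K_j$. Everything else in the argument is essentially bookkeeping, but this stabilizer question is genuinely a dynamical input rather than a purely algebraic one, and controlling it is what makes the lemma non-trivial.
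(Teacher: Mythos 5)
Your reduction of \eqref{compatibility} to the statement $T_3^{D(i)}(K_{j_1 j_2(i)}) = K_{j_1 j_2(i)}$ is correct, and the extension to the indices $r+1\le i\le q$ via the shifts $\beta_i$ is also fine. The gap is the last step, $D(i)=0$. Hypothesis (H) and the disjointness condition $\langle T_3\rangle K_i\cap\langle T_3\rangle K_j=\emptyset$ for $1\le i\ne j\le r$ only concern the orbits of \emph{distinct} representatives; they say nothing about the setwise stabilizer of a single $K_j$ under $\langle T_3\rangle$, and that stabilizer is non-trivial in completely standard minimal systems. For example, if the $T_3$-dynamics is (or factors through) an odometer, then every non-empty clopen set is a finite union of cylinders of some level $k$ and is therefore invariant under $T_3^{2^k}$, so \emph{no} refinement of the decomposition can produce representatives with trivial stabilizer; the "harmless refinement" you invoke is simply not available in general. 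Consequently your commutation argument only yields that $D(i)$ lies in the stabilizer subgroup of $K_{j_1j_2(i)}$ (i.e.\ $D(i)\equiv 0$ modulo its period), not that it vanishes.

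This is precisely where the lemma is non-trivial: when the stabilizers are non-trivial, the integers $k_1(i),k_2(i)$ furnished by (H) are \emph{not} unique, and the content of the lemma is that one can \emph{choose} them so that \eqref{compatibility} holds exactly rather than merely modulo the stabilizer. The paper's proof is exactly such a construction: working orbit by orbit for the commuting permutations $j_1,j_2$ on $\{1,\dots,r\}$, it chooses $k_1$ arbitrarily (subject to the first bullet) on all but one entry of each $j_1$-cycle, \emph{defines} the remaining entry by the value forced by \eqref{compatibility} (constancy of the cycle sums $C_{l_2}$), verifies that this forced value still satisfies the first bullet by going once around the $j_1$-cycle and using that $T_1^{p_1}T_3^{-C_0}$ commutes with the map carrying $K_\lambda$ to $K_{j_2^{l_2}(\lambda)}$, then solves for $k_2$ inductively from $k_1$, and finally extends both maps to $\{r+1,\dots,q\}$ by the $\beta_i$-shift as you indicate. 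To salvage your route you would have to show that the defects $D(i)$, which a priori only lie in the stabilizers, can be simultaneously normalized to zero by re-choosing the $k$'s; proving that is essentially the paper's combinatorial argument, not a consequence of a trivial-stabilizer assumption.
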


\begin{proof}\
We first concentrate on $\{1, \cdots, r\}$ and will extend $k_1$ and $k_2$ later on. The relation \eqref{compatibility} must be satisfied independently on every orbit of $j_1$ and $j_2$ of the form
$$
A := \{j_1^{l_1} j_2^{l_2} (\lambda), l_1, l_2\in \Z\}, \text{ for a given } \lambda\in \{1, \cdots, r\}.
$$
So we only need to give the construction for one such orbit. We denote by $p_1\geq 1$ and $p_2\geq 1$ the respective orders of $\lambda$ with respect to $j_1$ and $j_2$ and we set
$$
A_{l_2}:=\{j_2^{l_2} (\lambda), j_1 j_2^{l_2} (\lambda), \cdots, j_1^{p_1-1}j_2^{l_2} (\lambda)\}\text{ so that } A= \cup_{0\leq l_2\leq p_2-1} A_{l_2}
$$
We point out that since $j_1$ and $j_2$ commute, the order of all elements of $A_0$ under  $j_2$ is equal to $p_2$. Let $\varrho_2$ be the global order of $A_0$, that is the least integer $\varrho\geq 1$ such that $\varrho (A_0)=A_0$. Then $\varrho_2\leq p_2$ and for $0\leq l_1\leq p_1-1$ and $1\leq l_2\leq \varrho_2-1$, the integers  $j_1^{l_1} j_2^{l_2} (\lambda)$ are all distinct from each other so that the first item of the lemma would be easy to satisfy in the sequel, and there is a unique  integer $0\leq \varrho_1 \leq p_1-1$ such that  $j_2^{\varrho_2} (\lambda) = j_1^{\varrho_1} (\lambda)$. Notice also that if we write each $A_{l_2}$ in the following order
$$
A_{l_2} = \{j_2^{l_2}(\lambda), \cdots, j_2^{p_2-1}(\lambda), \lambda, \cdots, j_2^{l_2-1}(\lambda)\},
$$
then $j_2^{\varrho_2}$ is nothing but the $\varrho_1$ power of the cyclic permutation of $p_1$ variables.  We now construct $k_1(l_1, l_2):= k_1( j_1^{l_1} j_2^{l_2} (\lambda))$ on $[0, p_1-1]\times [0, \varrho_2-1]$. Assume that $k_1$ is given arbitrarily on $A_0$ and on any $A_{l_2}\smallsetminus \{j_1^{p_1-1}j_2^{l_2}(\lambda)\}$ satisfying the first item of the  lemma. More precisely, we assume that $k_1(l_1, l_2)$ are given integers for $l_2=0$ and $0\leq l_1\leq p_1-1$ on the one hand and for $1\leq l_2 \leq \varrho_2-1$ and $0\leq l_1\leq p_1-2$ on the other hand, so that they satisfy 
$$
T_1 T_3^{-k_1(l_1, l_2)} \left(K_{{j_1^{l_1} j_2^{l_2} (\lambda)}}\right) = K_{{j_1^{l_1+1} j_2^{l_2} (\lambda)}}\text{ and }T_2 T_3^{-k_2(l_1, l_2)} \left(K_{{j_1^{l_1} j_2^{l_2} (\lambda)}}\right) = K_{{j_1^{l_1} j_2^{l_2+1} (\lambda)}}.
$$
The second item actually imposes the missing values $k_1 (p_1-1, l_2)$ of $k_1$. More precisely, the sum $C_{l_2}:=\sum_{0\leq l_1\leq p_1-1} k_1( l_1, l_2)$ is then necessarily constant in $l_2$ and thus equal to $C_0$, for by \eqref{compatibility}
\begin{eqnarray*}
\sum_{0\leq l_1\leq p_1-1} [k_1( l_1, l_2+1) - k_1( l_1, l_2)] & = & \sum_{0\leq l_1\leq p_1-1} [k_2 ( l_1+1, l_2) - k_2( l_1, l_2)] \\
& = & k_2 (p_1, l_2) - k_2 (0, l_2) = 0.
\end{eqnarray*}
We thus set for $1\leq l_2\leq \varrho_2-1$ 
$$
k_1 (p_1-1, l_2) := C_0 - \sum_{0\leq l_1\leq p_1-2} [k_2 ( l_1+1, l_2).
$$
An easy verification shows that $k_1 (p_1-1, l_2)$ then satisfies the first item. Indeed, notice that
$$
T_1^{-1} T_3^{k_1(p_1-2, l_2)} (K_{{j_1^{p_1-1}j_2^{l_2}(\lambda)}}) = K_{{j_1^{p_1-2}j_2^{l_2}(\lambda)}}
$$
and similarly for $(p-3, l_2)$ etc. Therefore,
\begin{eqnarray*}
T_1T_3^{-k_1(p_1-1, l_2)} (K_{{j_1^{p_1-2}j_2^{l_2}(\lambda)}}) & = & T_1^{-(p_1-1)} T_3^{k_1(0, l_2) + \cdots + k_1(p_1-2, l_2)} \; ( K_{{j_1^{p_1-2}j_2^{l_2}(\lambda)}})\\
& = & \left[T_1^{-1} T_3^{k_1(0, l_2)}\right]\cdots  \left[T_1^{-1} T_3^{k_1(p_2-2, l_2)}\right] \left(K_{{j_1^{p_1-1}j_2^{l_2}(\lambda)}} \right)\\
& = & K_{{j_2^{l_2}(\lambda)}}
\end{eqnarray*}
It is easy to check that no other  condition is imposed on the values of $k_1$ on $[0, p_1-1]\times [0, \varrho_2-1]$ by the compatibility condition \eqref{compatibility}. Hence  
$k_1$ is now well defined on $[0, p_1-1]\times [0, \varrho_2-1]$ and satisfies the first item of the lemma. It is then extended to $[0, p_1-1]\times [0, p_2-1]$ by using that $\varrho_2$ is a divisor of $p_2$ and that $k_1$ (and also $k_2$) must satisfy that for any integer $\nu$
$$
k_1 (l_1, l_2+\nu \varrho_2) := k_1 (l_1+\nu \varrho_1, l_2).
$$
That such extension of $k_1(l_1, l_2+\nu \varrho_2)$ still satisfies the first item of the lemma is again straightforward since $k_1 (l_1+\nu \varrho_1, l_2)$ does for $l_2\leq \varrho_2-1$ and since 
$$
j_1^{l_1+\nu \varrho_1} j_2^{l_2} (\lambda) = j_1^{l_1} j_2^{l_2+\nu \varrho_2} (\lambda). 
$$
We shall now impose the compatibility condition \eqref{compatibility} to deduce $k_2$. Again, we need first to choose the values $k_2(0, l_2)$ for any $l_2\in [0, \varrho_2-1]$ which turn out to be arbitrary as far as they satisfy the first item, and we now show that all the values of $k_2(l_1, l_2)$ are  prescribed on $[0, p_1-1]\times [0, p_2-1]$ and satisfy the lemma. This is done on $[0, p_1-1]\times [0, \varrho_2-1]$ and then deduced again by the relation 
$$
k_2 (l_1, l_2+\nu \varrho_2) := k_2 (l_1+\nu \varrho_1, l_2).
$$
We proceed, for $0\leq l_2\leq \varrho_2-1$, inductively on  $l_1$. We set for instance, 
$$
k_2(1, l_2) := k_2(0, l_2) +k_1(0, l_2+1) - k_1 (0, l_2).
$$
Again such expression automatically satisfies the first item of the lemma. We then repeat the process for the induction in $l_1$ and deduce the values of $k_2(l_1, l_2)$ by using
$$
k_2(l_1+1, l_2) - k_2(l_1, l_2)  = k_1(l_1, l_2+1) - k_1 (l_1, l_2).
$$
We notice that as for $k_1$, the values of the integers $k_2(l_1, \varrho_2-1)$ could as well be deduced from the relation:
$$
\sum_{0\leq l_2\leq \varrho_2-1} k_2(l_1+1, l_2) - \sum_{0\leq l_2\leq \varrho_2-1} k_2(l_1, l_2) =  k_1(l_1+\varrho_1, 0) - k_1(l_1, 0).
$$
This is however compatible with the previous definition and is redundant. 

We conclude by explaining how to extend the maps $k_1$ and $k_2$ defined so far on $\{1, \cdots ,r\}$ only, to $\{1, \cdots, q\}$. We set for $r+1\leq i\leq q$:
$$
k_1 (i) := k_1 (j_3(i)) + \beta_i \text{ and } k_2 (i) := k_2 (j_3(i)) + \beta_i
$$
Then an easy verification shows that the extended maps  also satisfy the relations. More specifically, we can write for $r+1\leq i \leq q$:
$$
k_2 (j_1(i)) - k_2 (i) = [k_2 (j_1(j_3(i))) - k_2 (j_3(i))]-\beta_i = [k_1(j_2(j_3(i))) - k_1 (j_3(i))] - \beta_i = k_1(j_2(i)) - k_1(i).
$$
Moreover,
$$
T_1 K_{i} = T_1T_3^{\beta_i} (K_{{j_3(i)}}) = T_3^{\beta_i} T_3^{k_1 (j_3(i))} K_{j_1j_3(i)}=T_3^{k_1 (j_3(i))+\beta_i} K_{j_1(i)} = T_3^{k_1 (i)} K_{j_1(i)},
$$
and similarly for $T_2$.
\end{proof}

We denote for $r+1\leq i\leq q$ by $j_1^{-1} (i)$ the integer $j_1^{-1}(j_3(i))$ and similarly for $j_2$. So notice that 
$$
j_1^{-1}\circ j_1 = j_1\circ j_1^{-1} = j_3 \text{ and } j_2^{-1}\circ j_2 = j_2\circ j_2^{-1} = j_3,
$$ 
where $j_3$ has been extended to $\{1, \cdots, r\}$ by the identity map.

\begin{proposition}\label{TechnicalProposition}
Given $k_1$ and $k_2$ as in the previous lemma (so satisfying \eqref{compatibility}), the inductive relations 
\begin{equation}\label{induction}
k_2^{n_2+1}k_1^{n_1} (i) = k_2^{n_2}k_1^{n_1} (j_2(i)) + k_2(i)\;\; \text{ and } \;\; k_2^{n_2}k_1^{n_1+1} (i) = k_2^{n_2}k_1^{n_1} (j_1(i)) + k_1(i).
\end{equation}
together with $k_2^1k_1^0 = k_2$ and $k_2^0k_1^1 = k_1$ allow to {\underline{well define}} for any $(n_1, n_2)\in \Z^2$,  integer valued functions $k_2^{n_2}k_1^{n_1} : \{1, \cdots, q\} \rightarrow \Z$ satisfying
\begin{enumerate}
\item $\left( T_1^{n_1} T_2^{n_2} T_3^{-k_2^{n_2}k_1^{n_1} (i)} \right) (K_{i}) = K_{{j_2^{n_2}j_1^{n_1} (i)}}.$
\item $k_2^{m_2}k_1^{m_1} (j_2^{n_2-m_2} j_1^{n_1-m_1}(i))  + k_2^{n_2-m_2}k_1^{n_1-m_1} (i)  = k_2^{n_2}k_1^{n_1} (i) $, for any integers $n_1, n_2, m_1, m_2$ and any $i\in \{1, \cdots , q\}$.
\end{enumerate}
\end{proposition}

Notice that necessarily $k_2^0k_1^0=0$ and that the last item is trivially satisfied when $(m_1, m_2)=(n_1, n_2)$ or $(m_1, m_2)= (0, 0)$. Once such sequence $k_2^{n_2}k_1^{n_1}$ is proved to be well defined, one can deduce many expressions for it. For instance, for $n_1, n_2 > 0$, we have the following two expressions which then must fit by the previous proposition:
$$
k_2^{n_2}k_1^{n_1} (i) = \left[k_2(j_2^{n_2-1} j_1^{n_1} (i)) + \cdots + k_2(j_1^{n_1} (i))\right] + \left[ k_1(j_1^{n_1-1} (i)) + \cdots + k_1(i)\right]
$$
and 
$$
k_2^{n_2}k_1^{n_1} (i) = \left[k_1(j_1^{n_1-1} j_2^{n_2} (i)) + \cdots + k_2(j_2^{n_2} (i))\right] + \left[ k_2(j_2^{n_2-1} (i)) + \cdots + k_2(i)\right]
$$
It is also a straightforward exercise to check directly the coincidence of these two expressions.  There are similar expressions for negative $n_i$'s, and for any $n_i's$. When $n_1<0$ and $n_2<0$, one gets for instance an expression
$$
- k_2^{n_2}k_1^{n_1} (i)= \left[k_1(j_1^{n_1}j_2^{n_2}(i)) + \cdots + k_1(j_1^{-1}j_2^{n_2}(i))\right] + \left[ k_2 (j_2^{n_2}(i) + \cdots + k_2(j_2^{-1} (i)) \right]
$$
We shall first prove the well-definiteness of these maps and then use any of the above two expressions to deduce for instance the second item. 
\begin{proof}
We again concentrate on $\{1, \cdots, r\}$ where $j_1$ and $j_2$ are permutations and   explain later the different expressions  on $[r+1, q]$. The inductive definition allows to define many candidates for  $k_2^{n_2} k_1^{n_1}$ obtained by using different polygonal paths from $(0, 0)$ to $(n_1, n_2)$ in $\Z^2$, i.e. paths composed of horizontal and vertical segments with endpoints in $\Z^2$ and which start at $(0, 0)$ and end at $(n_1, n_2)$. Now, consider for any $(m_1, m_2)\in \Z^2$, the two paths joining $(m_1, m_2)$ to $(m_1+1, m_2+1)$ given by
\begin{multline*}
\maC_1:=[(m_1,m_2), (m_1+1, m_2)] \cup [(m_1+1, m_2), (m_1+1, m_2+1)] \quad \text{ and }\\ \quad \maC_2:=[(m_1,m_2), (m_1, m_2+1)] \cup [(m_1, m_2+1), (m_1+1, m_2+1)].
\end{multline*}
Applying the inductive argument with $\maC_1$ we get for any $i\in [1, q]$:
$$
k_2^{m_2+1}k_1^{m_1+1} (i) = k_2^{m_2}k_1^{m_1} (j_1j_2(i)) + k_1(j_2(i)) + k_2 (i).
$$
Now using $\maC_2$, we get
$$
k_2^{m_2+1}k_1^{m_1+1} (i) = k_2^{m_2}k_1^{m_1} (j_2j_1(i)) + k_2(j_1(i)) + k_1 (i).
$$
Therefore, and since we have chosen $k_1$ and $k_2$ so that $k_1(j_2(i)) + k_2 (i)=k_2(j_1(i)) + k_1 (i)$ and also since $j_1j_2=j_2j_1$, we see that we obtain the same result by using $\maC_1$ or $\maC_2$. We could as well use the inverse paths $-\maC_1$ and $-\maC_2$ and compute $k_2^{m_2}k_1^{m_1} (i)$ in terms of $k_2^{m_2+1}k_1^{m_1+1}$ and see that we also get the same result. Now if $\maC$ is any  polygonal path from the base point $(0, 0)$,  where $k_2^0k_1^0=0$, to $(n_1, n_2)$, composed of horizontal and vertical segments, we can use the inductive formulae to deduce an expression of $k_2^{n_2}k_1^{n_1} (i)$ which by repeating the previous argument as many times as necessary, will not depend on the chosen path. 

So, if we choose the simple path with one horizontal segment and one vertical segment $[(0, 0), (n_1, 0)]\cup [(n_1, 0), (n_1, n_2)]$ then we get the expression
$$
k_2^{n_2}k_1^{n_1} (i) = \left[k_1(j_1^{n_1-1} j_2^{n_2} (i)) + \cdots + k_2(j_2^{n_2} (i))\right] + \left[ k_2(j_2^{n_2-1} (i)) + \cdots + k_2(i)\right].
$$
while the other simple path $[(0, 0), (0, n_2)]\cup [(0, n_2), (n_1, n_2)]$ yields the expression
$$
k_2^{n_2}k_1^{n_1} (i) = \left[k_2(j_2^{n_2-1} j_1^{n_1} (i)) + \cdots + k_2(j_1^{n_1} (i))\right] + \left[ k_1(j_1^{n_1-1} (i)) + \cdots + k_1(i)\right].
$$

Let us now prove the first item of the proposition. If $n_1=n_2=0$ then the item is satisfied by obvious observation. We assume now that $k_2^{n_2}k_1^{n_1}(i)$ satisfies the first item for any $i=1, \cdots, r$. Then 
\begin{eqnarray*}
T_1^{n_1}T_2^{n_2+1} T_3^{-k_2^{n_2+1}k_1^{n_1} (i)} \left( K_{i} \right) & = & T_1^{n_1}T_2^{n_2} T_3^{-k_2^{n_2}k_1^{n_1} (j_2(i))} \left( T_2 T_3^{-k_2(i)} (K_{i}) \right)\\
& = & T_1^{n_1}T_2^{n_2} T_3^{-k_2^{n_2}k_1^{n_1} (j_2(i))}\left( K_{{j_2(i)}} \right)\\
& = & K_{{j_1^{n_1}j_2^{n_2+1} (i)}}
\end{eqnarray*}
In a similar way we prove that
$$
T_1^{n_1+1}T_2^{n_2} T_3^{-k_2^{n_2}k_1^{n_1+1} (i)} \left( K_{i} \right)  =  K_{{j_1^{n_1+1}j_2^{n_2} (i)}}.
$$
We obtain as well
\begin{eqnarray*}
T_1^{n_1}T_2^{n_2-1} T_3^{-k_2^{n_2-1}k_1^{n_1} (i)} \left( K_{i} \right) & = &  T_1^{n_1}T_2^{n_2} T_3^{-k_2^{n_2}k_1^{n_1} (j_2^{-1}(i))} \left( T_2^{-1} T_3^{k_2(j_2^{-1} (i))} (K_{i}) \right)\\
& = & T_1^{n_1}T_2^{n_2} T_3^{-k_2^{n_2}k_1^{n_1} (j_2^{-1}(i))} \left(  K_{{j_2^{-1}(i)}} \right)\\
& = & K_{{j_1^{n_1} j_2^{n_2-1}(i)}}
\end{eqnarray*}
and again similarly we get
$$
T_1^{n_1-1}T_2^{n_2} T_3^{-k_2^{n_2-1}k_1^{n_1} (i)} \left( K_{i} \right) = K_{{j_1^{n_1-1} j_2^{n_2}(i)}}.
$$
We hence get the first item for any $(n_1, n_2)\in \Z^2$.

We now prove the second item. We  prove first the following relation (which corresponds to the second item for $(n_1, n_2)=(0,0)$ and where we changed the notation):
$$
k_2^{n_2}k_1^{n_1} (j_2^{-n_2}j_1^{-n_1} (i)) + k_2^{-n_2}k_1^{-n_1} (i) = 0, \quad (n_1,n_2)\in \Z^2.
$$
For $n_1, n_2\geq 0$ we just use the expression
\begin{eqnarray*}
k_2^{n_2}k_1^{n_1} (j_2^{-n_2}j_1^{-n_1} (i)) &=& k_2(j_2^{n_2-1}j_1^{n_1} (j_2^{-n_2}j_1^{-n_1} (i)) ) + \cdots + k_2 (j_1^{n_1}j_2^{-n_2}j_1^{-n_1}(i)) \\ & & + k_1(j_1^{n_1-1}j_2^{-n_2}j_1^{-n_1}(i)) + \cdots + k_1(j_2^{-n_2}j_1^{-n_1} (i))\\
& = & k_2(j_2^{-1}(i)+ \cdots + k_2(j_2^{-n_2} (i))+ k_1(j_1^{-1}j_2^{-n_2}(i))+ \cdots + k_1(j_1^{-n_1}j_2^{-n_2}(i))
\end{eqnarray*}
But this is precisely the expression that we already got for $k_2^{-n_2}k_1^{-n_1} (i)$ in this case.   If $n_1<0$ and $n_2\geq 0$ then a direct computation gives
\begin{multline*}
k_2^{n_2}k_1^{n_1} (j_2^{-n_2}j_1^{-n_1} (i)) = \left[ k_2(j_2^{-n_2} j_1^{-n_1} (i)) + \cdots + k_2(j_2^{-1} j_1^{-n_1} (i))\right]\\ - \left[ k_1(i) + \cdots + k_1( j_1^{-n_1-1}(i)) \right]
\end{multline*}
Computing $k_2^{-n_2}k_1^{-n_1} (i)$ by applying first the induction to the positive integer $-n_1$, we get exactly the opposite to this expression. 
Notice now that by applying the previous results to $(-n_1, -n_2)$ and to $i'=j_2^{n_2}j_1^{n_1} (i)$ we see that the formula for $(n_1, n_2)$ is equivalent to the formula for $(-n_1, -n_2)$. Hence we have proved the formula of the second item when $n_1= n_2= 0$. Assume that this formula is satisfied for a given $(n_1, n_2)\in \Z^2$ and for any $i\in \{1, \cdots, r\}$ and any $(m_1, m_2)\in \Z^2$, then 
\begin{eqnarray*}
k_2^{n_2+1}k_1^{n_1} (i) & = & k_2^{n_2}k_1^{n_1} (j_2(i)) + k_2(i)\\
 &= & k_2^{m_2}k_1^{m_1} (j_2^{n_2-m_2}j_1^{n_1-m_1} (j_2(i))) + \left( k_2^{n_2-m_2}k_1^{n_1-m_1} (j_2(i)) + k_2(i)\right) \\
 &= & k_2^{m_2}k_1^{m_1} (j_2^{n_2+1-m_2}j_1^{n_1-m_1} (i)) + k_2^{n_2-m_2+1}k_1^{n_1-m_1} (i).
\end{eqnarray*}
Hence the formula is satisfied for $(n_1, n_2+1)$. We leave it to the interested reader to check that the formula is then also satisfied for $(n_1+1, n_2)$, $(n_1-1, n_2)$  as well as for $(n_1, n_2-1)$ by applying each time the induction formula.

So far we have concentrated on $i\in \{1, \cdots, r\}$ and we now  give the expressions of the maps $k_2^{n_2}k_1^{n_1}$ on $\{r+1, \cdots, q\}$ which extend the proposition in a straightforward manner. This is easy since we just set for $r+1\leq i\leq q$:
$$
k_2^{n_2}k_1^{n_1} (i) := k_2^{n_2}k_1^{n_1} (j_3(i)) + \beta_i.
$$
This expression satisfies again the inductive relations since
$$
k_2^{n_2+1}k_1^{n_1} (i) = k_2^{n_2+1}k_1^{n_1} (j_3(i)) + \beta_i = k_2^{n_2}k_1^{n_1} (j_2j_3(i)) + k_2(j_3(i)) + \beta_i=k_2^{n_2}k_1^{n_1} (j_2(i)) + k_2(i),
$$
and similarly for $k_2^{n_2}k_1^{n_1+1} (i)$. Also, we have $k_2^1k_1^0 (i)= k_2(j_3(i))+\beta_i= k_2(i)$ and $k_2^0k_1^1 (i)= k_1(j_3(i))+\beta_i= k_1(i)$. Moreover, if $(m_1, m_2)\neq (n_1, n_2)$ and $r+1\leq i\leq q$ then
\begin{eqnarray*}
k_2^{m_2}k_1^{m_1} (j_2^{n_2-m_2}j_1^{n_1-m_1} (i)) + k_2^{n_2-m_2}k_1^{n_1-m_1} (i) & = & k_2^{m_2}k_1^{m_1} (j_2^{n_2-m_2}j_1^{n_1-m_1} (j_3(i))) + k_2^{m_2}k_1^{m_1} (j_3(i)) + \beta_i\\
& = & k_2^{n_2}k_1^{n_1} (j_3(i)) + \beta_i\\
& = & k_2^{n_2}k_1^{n_1} (i)
\end{eqnarray*}
Notice that we have used as before the convention $j_2^0=j_1^0=j_3$. 
\end{proof}

We are now in a position to prove our main theorem.

\begin{proof} (of Theorem \ref{Morphism})

We know that $\Lambda=\amalg_{1\leq i \leq q} K_{i}$ as before, so where the first $r$ indices represent all the disjoint orbits of $\Lambda$ under $T_3$ with the maps $j_1, j_2, j_3$ as well as the family of maps $k_2^{n_2}k_1^{n_1}$ constructed in the previous proposition. We denote for any $j\in \{1, \cdots, q\}$ by $\varphi_j\in \{1, \cdots, q-r+1\}$ the cardinal of the set $j_3^{-1}\{j_3(j)\}$ and we set $\varphi_0=0$ and $\hat\varphi_j:=\sum_{0\leq i\leq j} \varphi_i$.   We then define the diagonal projection matrix $\hat\chi_\Lambda$ in $M_{\hat\varphi_q} (C(\Sigma)\rtimes_{i_*\sigma} \Z^3)$ by setting for $\hat\varphi_{j-1}+1\leq h \leq \hat\varphi_j$:
$$
(\hat\chi^\Lambda)_{hh'} (n_1, n_2, n_3) := \delta_{h,h'} \delta_{(n_1, n_2, n_3), (0, 0, 0)}\cdot \chi_{K_{{j}}}.
$$
where $\chi_{K_{{j}}}$ is the characteristic function of the minimal clopen $ K_{{j}}$ and $\delta$ stands as usual for the Kronecker symbol. If $F\in \C[\langle T_1, T_2 \rangle , \sigma]$ is a finitely supported function and if $1\leq j, j'\leq q$, then we set
$$
{\widetilde F}^\Lambda_{jj'} (n_1, n_2, n_3) := \delta_{j_3(j), j_2^{n_2}j_1^{n_1} (j')} \delta_{n_3, -k_2^{n_2}k_1^{n_1}(j')+\beta_j} \times F(n_1, n_2)
$$
where $\beta_j$ was defined so that $T_3^{\beta_j} (K_{{j_3(j)}}) = K_{j}$ with the convention that for $1\leq j\leq r$, $\beta_j=0$. 
Now, we can define the matrix ${\hat F}^\Lambda \in M_{\hat\varphi_q} (\C[\Z^3, i_*\sigma])$ by setting
$$
{\hat F}^\Lambda_{hh'} := \frac{1}{\sqrt{\varphi_{j'}\varphi_j}} \delta_{h-\hat\varphi_{j-1}, h'-\hat\varphi_{j'-1}}  {\widetilde F}^\Lambda_{jj'} \text{ if }\hat\varphi_{j-1}+1\leq h\leq \hat\varphi_j\text{ and } \hat\varphi_{j'-1}+1\leq h'\leq \hat\varphi_{j'}.
$$
Notice that this is possibly non zero only when $j_3(j) = j_2^{n_2}j_1^{n_1} (j')$ and in this case $\varphi_j=\varphi_{j'}$. We now set
$$
\Phi_\Lambda  (F) := {\hat F}^\Lambda \star \hat\chi^\Lambda \quad \in M_{\hat\varphi_q} (C(\Sigma)\rtimes_{i_*\sigma} \Z^3),
$$
where  $\star$ is the product in $M_{\hat\varphi_q} ( C(\Sigma) \rtimes_{i_*\sigma} \Z^3)$.
Notice that each ${\hat F}^\Lambda_{hh'}$ is clearly finitely supported in $\Z^3$. In order to show that $\Phi_\Lambda$ extends to  a $*$-homomorphism and using that $ \hat\chi^\Lambda$ is a self-adjoint idempotent, we will  check the following relations on $\C[\langle T_1, T_2 \rangle , \sigma]$:
$$
{\hat F}^\Lambda \star \hat\chi_\Lambda =  \hat\chi_\Lambda\star {\hat F}^\Lambda, \; ({\hat F}^\Lambda)^*={\widehat {F^*}}^\Lambda\; \text{ and } \; {\hat F}^\Lambda \star {\hat G}^\Lambda = {\widehat{FG}}^\Lambda.
$$
Computing for $\hat\varphi_{j-1}+1\leq h\leq \hat\varphi_j$ and $\hat\varphi_{j'-1}+1\leq h'\leq \hat\varphi_{j'}$, we get using that $\sigma ((0,0), (n_1, n_2))=1$:
\begin{eqnarray*}
({\hat F}^\Lambda \star \hat\chi^\Lambda)_{hh'} (n_1, n_2, n_3) & = & \frac{1}{\sqrt{\varphi_{j'}\varphi_j}} \delta_{h-\hat\varphi_{j-1}, h'-\hat\varphi_{j'-1}} \times {\widetilde F}^\Lambda_{jj'} (n_1, n_2, n_3) T_1^{n_1} T_2^{n_2} T_3^{n_3} (\chi_{K_{{j'}}})\\
& = & \frac{1}{\sqrt{\varphi_{j'}\varphi_j}} \delta_{h-\hat\varphi_{j-1}, h'-\hat\varphi_{j'-1}} F(n_1, n_2) \delta_{j_3(j), j_2^{n_2}j_1^{n_1}(j')} \delta_{n_3, -k_2^{n_2}k_1^{n_1}(j')+\beta_j}  \chi_{K_{j}}
\end{eqnarray*}
The last equality is a consequence of the relations
\begin{multline*}
\delta_{j_3(j), j_2^{n_2}j_1^{n_1}(j')} T_1^{n_1}T_2^{n_2}T_3^{-k_2^{n_2}k_1^{n_1} (j')+\beta_j} (\chi_{K_{{j'}}}) = \delta_{j_3(j), j_2^{n_2}j_1^{n_1}(j')} T_3^{\beta_j} \chi_{K_{{j_2^{n_2}j_1^{n_1} (j')}}}\\ = \delta_{j_3(j), j_2^{n_2}j_1^{n_1}(j')} T_3^{\beta_j} (\chi_{K_{{j_3(j)}}})=\delta_{j_3(j), j_2^{n_2}j_1^{n_1}(j')} (\chi_{K_{j}})
\end{multline*}
Computing $(\hat\chi^\Lambda \star {\hat F}^\Lambda)_{hh'} (n_1, n_2, n_3)$, we get the same expression, so that 
$$
\hat\chi^\Lambda \star {\hat F}^\Lambda = {\hat F}^\Lambda \star \hat\chi^\Lambda.
$$
On the other hand, for a given extra element $G\in \C[\langle T_1, T_2 \rangle , \sigma]$,  we compute similarly using the same notations
\begin{eqnarray*}
({\hat F}^\Lambda \star {\hat G}^\Lambda )_{hh'} (n_1, n_2, n_3) & = & \frac{1}{\sqrt{\varphi_{j'}\varphi_j}} \sum_{j''=1}^q \sum_{h''=\hat\varphi_{j''-1}+1}^{\hat\varphi_{j''}} \delta_{h-\hat\varphi_{j-1}, h''-\hat\varphi_{j''-1}}  \delta_{h'-\hat\varphi_{j'-1}, h''-\hat\varphi_{j''-1}}  \\ & & \sum_{m_1, m_2} \frac{1}{\varphi_{j''}}\delta_{j_3(j), j_2^{m_2}j_1^{m_1} (j'')} \delta_{j_3(j''), j_2^{n_2-m_2}j_1^{n_1-m_1} (j')} \\
& & \delta_{m_3, -k_2^{m_2}k_1^{m_1}(j'') + \beta_j} \delta_{n_3-m_3, -k_2^{n_2-m_2}k_1^{n_1-m_1}(j') + \beta_{j''}}  \\ & & F(m_1, m_2) G(n_1-m_1, n_2-m_2) \sigma ((m_1, m_2), (n_1-m_1, n_2-m_2))\\
& = &  \frac{1}{\sqrt{\varphi_{j'}\varphi_j}} \delta_{h-\hat\varphi_{j-1}, h'-\hat\varphi_{j'-1}} \delta_{j, j_2^{n_2}j_1^{n_1} (j')} \delta_{n_3, -k_2^{n_2}k_1^{n_1} (j')+\beta_j} \\& &  \sum_{m_1, m_2} F(m_1, m_2) G(n_1-m_1, n_2-m_2)\\  & & \sigma((m_1, m_2), (n_1-m_1, n_2-m_2))   \sum_{j''=1}^q \frac{1}{\varphi_{j''}} \delta_{j_3(j''), j_2^{n_2-m_2}j_1^{n_1-m_1} (j')}.
\end{eqnarray*}
Indeed, recall that by the $\langle T_1, T_2 \rangle $-invariance of the class of the characteristic function of $\Lambda$ in the coinvariants with respect to $\langle T_3 \rangle $, we know that for any $n_1, n_2, m_1$ and $m_2$, and for any $j''\in j_3^{-1}(j_2^{n_2-m_2}j_1^{n_1-m_1} (j'))$, we have $
\varphi_{j''} = \varphi_{j_3(j'')} = \varphi _{j'} (=\varphi_j).$
But for any fixed $(m_1, m_2, n_1, n_2)\in \Z^4$ we have
$$
 \sum_{j''=1}^q \frac{1}{\varphi_{j''}} \delta_{j_3(j''), j_2^{n_2-m_2}j_1^{n_1-m_1} (j')} = \sum_{j_3(j'')=j_2^{n_2-m_2} j_1^{n_1-m_1} (j')} (\varphi_{j_2^{n_2-m_2} j_1^{n_1-m_1} (j')})^{-1} = 1.
$$
Therefore, we get for any $h, h'$:
$$
({\hat F}^\Lambda \star {\hat G}^\Lambda )_{hh'} = {\widehat{FG}}^\Lambda_{hh'}.
$$
using the previous results we deduce that $
\Phi_\Lambda (FG) = \Phi_\Lambda (F) \star \Phi_\Lambda (G).$

In the same way, we compute
\begin{eqnarray*}
\left[{\hat F}^\Lambda_{h'h}\right]^* (n_1, n_2, n_3) & = & {\overline{{\hat F}^\Lambda_{h'h} (-n_1, -n_2, -n_3)}}\\
& = & \frac{1}{\sqrt{\varphi_{j'}\varphi_j}} \delta_{h'-\hat\varphi_{j'-1}, h-\hat\varphi_j}  {\overline{ {\widetilde F}^\Lambda_{j'j} (-n_1, -n_2, -n_3)}}
\end{eqnarray*}
So forgetting the cocycle $\sigma$ which doesn't perturb the following computation, we can write
\begin{eqnarray*}
{\widetilde F}^\Lambda_{j'j} (-n_1, -n_2, -n_3) & = & \delta_{j_3(j'), j_2^{-n_2}j_1^{-n_1} (j_3(j))} \delta_{-n_3, -k_2^{-n_2}k_1^{-n_1} (j) + \beta_{j'}} F(-n_1, -n_2)\\
 & = & \delta_{j_3(j), j_2^{n_2}j_1^{n_1} (j')} \delta_{n_3, k_2^{-n_2}k_1^{-n_1} (j)- \beta_{j'}} F(-n_1, -n_2)\\
 & = & \delta_{j_3(j), j_2^{n_2}j_1^{n_1} (j')} \delta_{n_3, k_2^{-n_2}k_1^{-n_1} (j_3(j)) +\beta_j- \beta_{j'}} F(-n_1, -n_2)\\
 & = & \delta_{j_3(j), j_2^{n_2}j_1^{n_1} (j')} \delta_{n_3, k_2^{-n_2}k_1^{-n_1} (j_2^{n_2}j_1^{n_1} (j_3(j'))) +\beta_j- \beta_{j'}} F(-n_1, -n_2)\\
 & = & \delta_{j_3(j), j_2^{n_2}j_1^{n_1} (j')} \delta_{n_3, -k_2^{n_2}k_1^{n_1} (j_3(j')) +\beta_j- \beta_{j'}} F(-n_1, -n_2)\\
 & =& \delta_{j_3(j), j_2^{n_2}j_1^{n_1} (j')} \delta_{n_3, -k_2^{n_2}k_1^{n_1} (j') +\beta_j} F(-n_1, -n_2)
\end{eqnarray*}
Hence we see that 
$$
\left[\Phi_{\Lambda} (F)\right]^* = \Phi_{\Lambda} (F^*),
$$
proving finally that $\Phi_\Lambda$ is a $*$-morphism from the involutive algebra $\C[\langle T_1, T_2 \rangle , \sigma]$  to $M_{\hat\varphi_q} (C(\Sigma)\rtimes_{i_*\sigma} \Z^3)$ which is valued in the algebraic crossed product. It is easy to check that $\Phi_\Lambda$ extends to a $C^*$-algebra morphism and hence induces a group morphism
$$
\Phi_{\Lambda, *} : K_*(C^*(\langle T_1, T_2 \rangle , \sigma)) \longrightarrow K_*(C(\Sigma)\rtimes_{i_*\sigma} \Z^3).
$$
Moreover, for any $F\in \C[\langle T_1, T_2 \rangle , \sigma]$, we have
\begin{eqnarray*}
(\tau^\mu\sharp \tr) \Phi_\Lambda (F) & = & \sum_{j-1}^q \sum_{h=\hat\varphi_{j-1}+1}^{\hat\varphi_j}  \tau^\mu \Phi_\Lambda (F)_{hh}\\
& = & \sum_{j=1}^q  \mu (K_{j})  \sum_{h=\hat\varphi_{j-1}+1}^{\hat\varphi_j} {\hat F}^\Lambda_{hh} (0, 0, 0)
\end{eqnarray*}
But $
{\hat F}^\Lambda_{hh} (0, 0, 0) = \frac{1}{\varphi_j} \times F(0, 0)$,
therefore
$$
(\tau^\mu\sharp \tr) \Phi_\Lambda (F)  = \sum_{j=1}^q \mu (K_{j})  F(0, 0) \sum_{h=\hat\varphi_{j-1}+1}^{\hat\varphi_j}   \frac{1}{\varphi_j} = \mu (\Lambda)\times \tau (F).
$$
\end{proof}

We end this section with an explanation of the relation between Theorem \ref{Morphism} and the easy-half of our conjecture in the 3D case.

\begin{proposition}\label{EasyInclusion}
Assume that $p=3$ so that $\Z^3$ acts minimally on $\Sigma$. Assume further that $\Theta_{13}= \Theta_{23}=0$ and that any $\Z$-valued function $\lambda$ on $\Sigma$ which represents a  class in $\left( C(\Sigma, \Z)_{\langle T_3\rangle}\right)^{\Z^3}$ can be decomposed as a finite algebraic sum of characteristic functions of clopen subspaces satisfying the hypothesis (H). Then  the magnetic frequency group $\Z[\mu] + \Theta_{12} \Z_{12}[\mu]$ is contained in the magnetic gap-labelling group associated with the multiplier $\sigma$ which corresponds to $\Theta$. 

\end{proposition}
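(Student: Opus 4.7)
The plan is to decompose the magnetic frequency group into the two summands $\Z[\mu]$ and $\Theta_{12}\Z_{12}[\mu]$ and prove each inclusion separately. The first piece is obtained directly from projections in $C(\Sigma) \hookrightarrow C(\Sigma)\rtimes_\sigma \Z^3$, while the second---the genuinely magnetic contribution---will be extracted by pulling back the Rieffel projection of the noncommutative $2$-torus $A_{\Theta_{12}}$ through the $*$-homomorphisms $\Phi_\Lambda$ supplied by Theorem \ref{Morphism}.

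For the easy summand, I would note that for any clopen $U \subset \Sigma$ the characteristic function $\chi_U$ defines a self-adjoint projection in $C(\Sigma)$ and hence a class in $K_0(C(\Sigma)\rtimes_\sigma \Z^3)$ via the canonical inclusion, with $\tau^\mu_*([\chi_U]) = \mu(U)$ by formula \eqref{trace}. Varying $U$ and taking $\Z$-linear combinations immediately yields $\Z[\mu] \subset \Range(\tau^\mu_*)$.

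For the magnetic summand, I would first verify the compatibility of multipliers. Since $\Theta_{13} = \Theta_{23} = 0$, the multiplier $\sigma$ on $\Z^3$ associated with $\Theta$ depends only on the first two components and equals $\iota_*\sigma_{12}$, where $\iota: \langle T_1, T_2\rangle \hookrightarrow \Z^3$ is the natural inclusion and $\sigma_{12}$ is the multiplier on $\langle T_1, T_2\rangle \simeq \Z^2$ associated with $\Theta_{12}$, exactly as in the proof of Lemma \ref{Even}. Thus $C^*(\langle T_1, T_2\rangle, \sigma_{12})$ is the noncommutative $2$-torus $A_{\Theta_{12}}$, and by Lemma \ref{2Dtrace} together with Rieffel's construction the projection $\cP_{\Theta_{12}} \in A_{\Theta_{12}}$ satisfies $\tau([\cP_{\Theta_{12}}]) = \Theta_{12}$. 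Now I would pick any $\lambda \in C(\Sigma, \Z)$ whose class in $C(\Sigma,\Z)_{\langle T_3\rangle}$ is $\Z^3$-invariant, and, using the decomposition hypothesis, write
$$
\lambda \;=\; \sum_j n_j \,\chi_{\Lambda_j}
$$
with each $\Lambda_j$ a clopen satisfying (H) and $n_j \in \Z$. Applying Theorem \ref{Morphism} to each $\Lambda_j$ and evaluating the induced map on the Rieffel class produces
$$
\tau^\mu_*\bigl(\Phi_{\Lambda_j,*}([\cP_{\Theta_{12}}])\bigr) \;=\; \mu(\Lambda_j)\,\Theta_{12},
$$
so that the $\Z$-linear combination $\sum_j n_j \,\Phi_{\Lambda_j,*}([\cP_{\Theta_{12}}]) \in K_0(C(\Sigma)\rtimes_\sigma \Z^3)$ has $\tau^\mu_*$-trace $\mu(\lambda)\Theta_{12}$. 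As $\lambda$ ranges over representatives of classes in $(C(\Sigma,\Z)_{\langle T_3\rangle})^{\Z^3}$, this yields $\Theta_{12}\Z_{12}[\mu] \subset \Range(\tau^\mu_*)$, and combining with the first inclusion completes the proof.

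The substantive work is of course Theorem \ref{Morphism} itself; once it is in hand, the only verifications left in the present proposition are the multiplier bookkeeping above and the linearity of $\Lambda \mapsto \Phi_{\Lambda,*}([\cP_{\Theta_{12}}])$ in $[\chi_\Lambda]$. The genuine obstacle limiting the scope of the proposition---and preventing it from yielding the full easy-half of Conjecture \ref{mainconj} in dimension $3$---is precisely the decomposition assumption on $\lambda$: one must show that every class in $(C(\Sigma,\Z)_{\langle T_3\rangle})^{\Z^3}$ admits a representative expressible as a $\Z$-linear combination of characteristic functions of clopens satisfying (H), a combinatorial problem explicitly announced in the introduction as the subject of the forthcoming work.
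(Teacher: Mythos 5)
Your proposal is correct and follows essentially the same route as the paper: reduce to the summand $\Theta_{12}\Z_{12}[\mu]$ (the $\Z[\mu]$ part coming from projections $\chi_U\in C(\Sigma)$), observe that $\Theta_{13}=\Theta_{23}=0$ makes $\sigma=i_*\sigma_{12}$, and then feed each clopen $\Lambda_j$ in the decomposition of $\lambda$ into Theorem \ref{Morphism} so that the trace identity $\tau^\mu_*\circ\Phi_{\Lambda_j,*}=\mu(\Lambda_j)\,\tau_*$ together with Rieffel's computation of $\tau_*\bigl(K_0(A_{\Theta_{12}})\bigr)$ yields $\Theta_{12}\mu(\lambda)$ in the range. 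Your write-up merely makes explicit two steps the paper leaves implicit (the use of the Rieffel class and the $\Z$-linearity over the decomposition), so no further comment is needed.
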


\begin{proof}\ 
We only need to prove that $ \Theta_{12} \Z_{12}[\mu]$ is contained in the magnetic gap-labelling group. 

We denote by $\sigma$ the multiplier of $\langle T_1, T_2\rangle$ which is associated with the $2\times 2$-matrix 
$$
\left(\begin{array}{cc} 0 & \Theta_{12} \\ -\Theta_{12} & 0  \end{array}\right).
$$ 
Then using the inclusion $i:\langle T_1, T_2\rangle \hookrightarrow \Z^3$ and applying Theorem \ref{Morphism}, we deduce that for any clopen subspace $\Lambda$ which satisfies (H) for $T_3$, 
$$
\{ \mu (\Lambda) \times \tau_*(x), x\in  K_0(C^*(\langle T_1, T_2 \rangle , \sigma), \Lambda \text{ as before}\} \subset \tau^\mu_*(K_0(C(\Sigma)\rtimes_{i_*\sigma_{12}} \Z^3)),
$$
where $i_*\sigma$ is associated with the skew matrix 
$$
\left(\begin{array}{ccc} 0 & \Theta_{12} & 0\\ -\Theta_{12} & 0 & 0\\ 0 & 0 & 0 \end{array}\right).
$$ 
Hence $ \Theta_{12} \Z_{12}[\mu]$ is contained in the magnetic gap-labelling group.\\
\end{proof}

 
\appendix



\section{The coinvariants as a direct summand in $K$-theory}


Here we give a direct proof  that the coinvariants are always direct summands in the $K$-theory of the twisted crossed product $C^*$-algebra, without using the Packer-Raeburn trick \cite{PR}. The method is an important step towards a complete solution to our easy-half conjecture for all dimensions.

Let $\Sigma$ be a Cantor space on which $\ZZ^p$ acts by homeomorphisms. We do not assume that the action is minimal 
in this section, because we want to use induction on $p$. Consider the twisted crossed product $C^*$-algebra
$C(\Sigma)\rtimes_\sigma\ZZ^p$, with $\Theta$ a skew-symmetric $(p \times p)$ matrix determining the multiplier 
$\sigma$. We can write $C(\Sigma)\rtimes_\sigma\ZZ^p=\left(C(\Sigma)\rtimes_{\sigma|}\ZZ^{p-1}\right)\rtimes_{\phi}\ZZ^{(p)}$  
where $\sigma|$ is the restriction of the multiplier $\sigma$ to $\ZZ^{p-1}\times\ZZ^{p-1}$ and $\ZZ^{(p)}$ is the $p$-th copy of $\ZZ$ and
$\phi$ is an automorphism of $C(\Sigma)\rtimes_{\sigma|}\ZZ^{p-1}$ given by
$$
\phi(U_j) = \exp(2\pi \sqrt{-1} \Theta_{jp}) U_j, \qquad j=1, \ldots (p-1).
$$
where $U_1, \ldots U_{(p-1)}$ are unitary automorphisms of $C(\Sigma)$ generating the twisted crossed product,
$C(\Sigma)\rtimes_{\sigma|}\ZZ^{p-1}$. {{More precisely, the map $\phi$ is an automorphism of $C(\Sigma)\rtimes_{\sigma|}\ZZ^{p-1}$ which is given, on elementary elements $g U_{k_1, \cdots, k_{p-1}}$ of $C(\Sigma)\rtimes_{\sigma|}\ZZ^{p-1}$, by the formula
$$
\phi (g U_{k_1, \cdots, k_{p-1}}) := T_p(g)  \exp(2\pi \sqrt{-1} [k_1 \Theta_{p1}+ \cdots + k_{p-1}\Theta_{p\, p-1}]) U_{k_1, \cdots, k_{p-1}}.
$$
Then an easy inspection shows that we recover in this way $C(\Sigma)\rtimes_{\sigma}\ZZ^{p}$ from $C(\Sigma)\rtimes_{\sigma|}\ZZ^{p-1}$ and the $\Z$ action generated by $\phi$. Indeed, the map 
$$
\left(g U_{k_1, \cdots, k_{p-1}}\right) \delta_{k_p} \longmapsto \exp(\pi \sqrt{-1} [k_1 \Theta_{1p}+ \cdots + k_{p-1}\Theta_{p-1\, p}])g U_{k_1, \cdots, k_{p}},
$$
extends to the allowed $C^*$-algebra isomorphism
$$
\left[C(\Sigma)\rtimes_{\sigma|}\ZZ^{p-1} \right]\rtimes \Z \longrightarrow C(\Sigma)\rtimes_{\sigma}\ZZ^{p}.
$$
}} Clearly $\phi$ is homotopic to the identity, since the exponential defining it can be scaled. This is essentially an argument in \cite{Phillips}.

The Pimsner-Voiculescu (PV) sequence \cite{PV} gives the exact sequences for $i=0,1$,
\begin{equation}
0\longrightarrow K_i(C(\Sigma)\rtimes_{\Theta|}\ZZ^{p-1})_{\ZZ^{(p)}}\longrightarrow K_i(C(\Sigma)\rtimes_\Theta\ZZ^p)\overset{\partial_p}{\longrightarrow}K_{i-1}(C(\Sigma)\rtimes_{\Theta|}\ZZ^{p-1})^{\ZZ^{(p)}}\longrightarrow 0,
\end{equation}
where $(\cdot)^{\ZZ^{(p)}}$ and $(\cdot)_{\ZZ^{(p)}}$ denote, respectively, the invariants and coinvariants of $(\cdot)$ under the induced map in $K$-theory of the $\ZZ^{(p)}$ action. We will abbreviate these by $(\cdot)^p$ and $(\cdot)_p$ respectively. 
We can calculate these $K$-theory groups iteratively. Set $\cA=C(\Sigma,\ZZ)$.  Then the following is a consequence of the general spectral sequence which computes $K$-theory if one uses the Packer-Raeburn trick \cite{PR}. We give it an independent treatment here.

\begin{proposition}
In the notation above, 
$\cA_{123\ldots p}$ is a direct summand in $K_0(C(\Sigma)\rtimes_{\sigma}\ZZ^{p})$.
\end{proposition}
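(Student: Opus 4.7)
The proof will proceed by induction on $p$, applying the Pimsner--Voiculescu exact sequence iteratively to the decomposition $C(\Sigma)\rtimes_\sigma \ZZ^p = (C(\Sigma)\rtimes_{\sigma|}\ZZ^{p-1})\rtimes_\phi \ZZ^{(p)}$. The base case $p=0$ is immediate, since $K_0(C(\Sigma)) = C(\Sigma,\ZZ) = \cA = \cA_\emptyset$. For the inductive step I would strengthen the hypothesis by requiring the retraction onto $\cA_{12\ldots(p-1)}$ to be functorial in the sense that it is equivariant under any homeomorphism of $\Sigma$ commuting with the $\ZZ^{p-1}$-action; this property is inherited through the induction because the retraction is built from the manifestly equivariant inclusion $C(\Sigma)\hookrightarrow C(\Sigma)\rtimes_{\sigma|}\ZZ^{p-1}$.

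Assuming the strengthened hypothesis at $p-1$, the retraction $r_{p-1}$ descends to $\ZZ^{(p)}$-coinvariants, producing $\bar r_{p-1}\colon K_0(C(\Sigma)\rtimes_{\sigma|}\ZZ^{p-1})_{\ZZ^{(p)}} \to (\cA_{12\ldots(p-1)})_{\ZZ^{(p)}} = \cA_{12\ldots p}$. Combined with the PV exact sequence displayed above, this exhibits $\cA_{12\ldots p}$ as a summand of $\ker(\partial_p)$, which is in turn a subgroup of $K_0(C(\Sigma)\rtimes_\sigma \ZZ^p)$. The remaining task is to extend $\bar r_{p-1}$ to a retraction $r_p$ defined on the full group, and to verify that $r_p$ inherits the functoriality clause so that the induction continues.

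For the extension step I would use the natural inclusion $\jmath\colon \cA = K_0(C(\Sigma)) \to K_0(C(\Sigma)\rtimes_\sigma \ZZ^p)$ induced by $C(\Sigma)\hookrightarrow C(\Sigma)\rtimes_\sigma \ZZ^p$. Since $U_\gamma f U_\gamma^{*} = T_\gamma f$ furnishes a Murray--von Neumann equivalence in the crossed product, $\jmath$ factors through $\cA_{12\ldots p}$ to give $\bar\jmath\colon \cA_{12\ldots p}\to K_0(C(\Sigma)\rtimes_\sigma \ZZ^p)$ whose image lies in $\ker(\partial_p)$ and coincides there with the PV-embedded image of the inductive summand. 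The map $\bar\jmath$ therefore provides a canonical lift of $\cA_{12\ldots p}$ into $K_0(C(\Sigma)\rtimes_\sigma \ZZ^p)$, and $r_p$ can then be constructed by combining this lift with a completion compatible with $\bar r_{p-1}$ on the PV kernel.

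The main obstacle I anticipate is verifying that $\bar r_{p-1}$ indeed extends across the PV extension; equivalently, that the pushforward extension class in $\operatorname{Ext}^1(K_1(C(\Sigma)\rtimes_{\sigma|}\ZZ^{p-1})^{\ZZ^{(p)}}, \cA_{12\ldots p})$ vanishes. My expectation is that this follows from the functoriality of the construction together with the existence of the canonical lift $\bar\jmath$: one should exhibit an explicit cocycle representative of the pushforward class and observe that it is nullhomologous via $\bar\jmath$. Carrying out this bookkeeping consistently at every stage of the induction, while simultaneously preserving the functoriality required to pass to coinvariants at the next step, is where the real work lies.
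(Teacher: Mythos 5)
Your skeleton (induction on $p$, the decomposition $C(\Sigma)\rtimes_\sigma\ZZ^p=(C(\Sigma)\rtimes_{\sigma|}\ZZ^{p-1})\rtimes_\phi\ZZ^{(p)}$, and the PV sequence) is the paper's, but the mechanism you substitute for the inductive step has two genuine gaps. The first is the strengthened hypothesis itself: you never produce an equivariant retraction, and your justification that it ``is built from the manifestly equivariant inclusion $C(\Sigma)\hookrightarrow C(\Sigma)\rtimes_{\sigma|}\ZZ^{p-1}$'' is backwards --- that inclusion induces the section $\cA_{12\ldots(p-1)}\to K_0(C(\Sigma)\rtimes_{\sigma|}\ZZ^{p-1})$ (your $\bar\jmath$ one stage down), not a retraction. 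A retraction is a choice of complement, and the only source of complements in this setting is a splitting of a PV extension obtained from freeness of the quotient (in the paper, from $\cA^{12}\subset\cA=C(\Sigma,\ZZ)$ being free abelian at the $\ZZ^2$ stage); such a splitting is non-canonical and carries no equivariance. Without equivariance you can neither descend $r_{p-1}$ to $\ZZ^{(p)}$-coinvariants nor even conclude that $\cA_{12\ldots p}=(\cA_{12\ldots(p-1)})_{\ZZ^{(p)}}$ injects into $\ker(\partial_p)$, coinvariants being only right exact. (A smaller point: the $\ZZ^{(p)}$-action on $K_0$ is induced by the twisted automorphism $\phi$, not by a bare homeomorphism of $\Sigma$ commuting with the action; you need the paper's observation that the phase factors are homotopically trivial in order to identify its effect on $K$-theory with that of the untwisted $T_p$-action.)

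The second gap is the one you flag yourself: the vanishing of the pushforward class in $\operatorname{Ext}^1\bigl(K_1(C(\Sigma)\rtimes_{\sigma|}\ZZ^{p-1})^{\ZZ^{(p)}},\,\cA_{12\ldots p}\bigr)$ is exactly equivalent to the statement being proved, and your proposed route to it via $\bar\jmath$ is vacuous: $\bar\jmath$ is nothing but the composite of the PV inclusions, i.e.\ the very map whose splitting you are trying to construct, and its existence gives no control on the Ext class (the class vanishes if and only if $\bar r_{p-1}$ extends, which is the claim). So the proposal identifies the difficulty but does not overcome it. Note that the paper's proof takes a more elementary route: it constructs no retractions and invokes no Ext obstructions, but computes $K_*$ explicitly for the first one and two variables, splits the $\ZZ^2$ PV sequence outright using that $\cA^{12}$ is free abelian (a subgroup of $C(\Sigma,\ZZ)$), and then carries the resulting direct-sum decomposition through the coinvariants term of each subsequent PV sequence to read off $\cA_{12\ldots p}$ as a summand. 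If you wish to keep your formulation, you must supply both a natural (equivariant) choice of splitting and a proof that the Ext obstruction vanishes; neither is present in the proposal.
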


\begin{proof}

It is known that 
\beq
    K_0(C(\Sigma))=\cA,\qquad K_1(C(\Sigma))=0.
\eeq

For the first $\ZZ$ action (for which $\Theta|$ vanishes trivially), we have
\begin{eqnarray}
K_0(C(\Sigma)\rtimes\ZZ)&\cong& \cA_1 \\
K_1(C(\Sigma)\rtimes\ZZ)&\cong& \cA^1.
\end{eqnarray}

For the first $\ZZ^2$ action, using the PV sequence we have
\begin{equation}
K_0(C(\Sigma)\rtimes_{\Theta|}\ZZ^2)\cong \cA_{12}\oplus\cA^{12}
\end{equation}
because $\cA^{12}\subset\cA$ is free abelian. 
So, $\cA_{12}$ is a direct summand in $K_0(C(\Sigma)\rtimes_{\Theta|}\ZZ^2)$.

For the first $\ZZ^3$ action, we have
\begin{equation}
0\longrightarrow (\cA_{12}\oplus\cA^{12})_3\longrightarrow K_0(C(\Sigma)\rtimes_{\Theta|}\ZZ^3)\longrightarrow
K_1(C(\Sigma)\rtimes_{\Theta|}\ZZ^2)^3\longrightarrow 0,
\end{equation}
that is
\begin{equation}
0\longrightarrow \cA_{123}\oplus(\cA^{12})_3\longrightarrow K_0(C(\Sigma)\rtimes_{\Theta|}\ZZ^3)\longrightarrow
K_1(C(\Sigma)\rtimes_{\Theta|}\ZZ^2)^3\longrightarrow 0,
\end{equation}
Therefore, $\cA_{123}$ is a direct summand in $K_0(C(\Sigma)\rtimes_{\Theta|}\ZZ^3)$.

Suppose now that $\cA_{123\ldots (p-1)}$ is a direct summand in $K_0(C(\Sigma)\rtimes_{\sigma|}\ZZ^{p-1})$. That is, 
$$
K_0(C(\Sigma)\rtimes_{\sigma|}\ZZ^{p-1}) = \cA_{123\ldots (d-1)} \oplus \cM.
$$

Then the PV sequence implies that 
\begin{equation}
0\longrightarrow \left(\cA_{12\ldots (p-1)}\oplus \cM\right)_p\longrightarrow K_0(C(\Sigma)\rtimes_{\sigma}\ZZ^p)\longrightarrow
K_1(C(\Sigma)\rtimes_{\sigma|}\ZZ^{p-1})^p\longrightarrow 0,
\end{equation}
that is,
\begin{equation}
0\longrightarrow \cA_{12\ldots p}\oplus \cM_p\longrightarrow K_0(C(\Sigma)\rtimes_{\sigma}\ZZ^p)\longrightarrow
K_1(C(\Sigma)\rtimes_{\sigma|}\ZZ^{p-1})^p\longrightarrow 0.
\end{equation}

In particular,
$\cA_{123\ldots p}$ is a direct summand in $K_0(C(\Sigma)\rtimes_{\sigma}\ZZ^{p})$,
thereby proving the Proposition.

\end{proof}


\section{A more detailed history of gap-labelling theorems}\label{history}


We give here a brief overview of the history of the gap-labelling theorems and conjectures for the last 35 years.  We thank Jean Bellissard for his invaluable help concerning this section. 

The first mention of a {\em gap-labelling theorem} probably goes back to 
a paper by J.~Moser in 1981 \cite{Moser}, concerning the Schr\"odinger operator in 1D with an 
almost periodic potential: he proved that the 
gaps are labeled by the {\em frequency module} of the potential, namely the 
$\mathbb{Z}$-module generated by 
the frequencies of the Bohr-Fourier decomposition of the almost periodic potential.
These ideas were further developed by Johnson and Moser \cite{JM}.

In higher dimensions, it turns out that the frequency module doesn't  label spectral gaps,
as seen by a counter-example \cite{Bellissard2}.
Bellissard's version of the {\em gap-labelling theorem} says that the spectral gaps of any self-adjoint 
operator $H$ (bounded or not) are labeled by the $K_0$-group of a
C$^\ast$-algebra this operator is affiliated with. In the case of an 
operator which is {\em homogeneous} (see \cite{Bellissard85}) with respect to some 
translations group $G ={\mathbb R}^p$ or $G={\mathbb Z}^p$, 
this C$^\ast$-algebra can be chosen to be, up to Morita equivalence,  the crossed product algebra 
$C(\Omega)\rtimes G$ where $\Omega$ is a compact space in the strong topology.
In addition, using the Shubin formula (see \cite{Bellissard85}) the 
labels are expressed as the image of $K_0$ by a canonical trace and the 
label of a given gap is also given by the value of the Integrated Density 
of States, see \cite{Bellissard1, Bellissard2, Bellissard85}.

The general proof was finally given by \cite{BO, KP, BBG} under some integrality assumption on the associated \v{C}ech Chern character.
We recall that a 
quasicrystal is a distribution of points in the space obtained by the 
so-called {\em cut-and-project method} (also called {\em model sets} following 
the PhD work of Yves Meyer in 1972). The result concerning a ${\mathbb 
Z}^p$-action on a Cantor set $\Sigma$ is more general and applies also to aperiodic systems 
which are {\em not} cut-and-project, such as the {\em Thue-Morse sequence} or the {\em chair 
tiling}, to cite only a few (for other examples, see  \cite{Sadun}). 
The modern way of computing the gap labels involves the \v{C}ech 
cohomology of the hull, initiated by 
\cite{AP}.

Proposition 6.2.1 in \cite{Bellissard4} has a formula involving determinants that superficially
resembles our Conjecture \ref{mainconj}. We emphasize that it does {\em not} include the magnetic field. The relevant 
matrix in that formula is the {\em frequency matrix} of the aperiodic potential (actually square
submatrices of it). Another assumption made is that $\Omega$ is a manifold, in which case
one is able to get more precise results. In our case, $\Omega$ is never a manifold as it is either a Cantor set $\Sigma$ when $G=\ZZ^p$, or a fibre bundle $X$ over 
the torus with fibre a Cantor set $\Sigma$ when $G=\RR^p$.\\


\end{document}